\newcommand*\dd{\mathop{}\!\mathrm{d}}
\DeclareMathOperator{\N}{\mathbb{N}}
\DeclareMathOperator{\spn}{span}
\DeclareMathOperator{\conv}{conv}
\DeclareMathOperator{\solve}{\texttt{SOLVE}}
\DeclareMathOperator{\podop}{\texttt{POD}}
\DeclareMathOperator{\PPi}{\Pi}
\newcommand{\cf}{cf.\ }
\newcommand{\etal}{et al.\ }
\newcommand{\ie}{i.e.\ }
 	 	 	\newcommand{\C}[1]{}
\newcommand{\R}{\mathbb{R}}
\pgfplotsset{select coords between index/.style 2 args={
    x filter/.code={
        \ifnum\coordindex<#1\fi
        \ifnum\coordindex>#2\fi
    }
}}
\author{Paul Manns\thanks{Mathematics and Computer Science Division, Argonne National Laboratory, Lemont, IL 
  (\email{pmanns@anl.gov}).}
\and Stefan Ulbrich\thanks{Department of Mathematics, TU Darmstadt, Darmstadt, Germany
  (\email{ulbrich@mathematik.tu-darmstadt.de}).}}
\title{A simplified Newton method to generate snapshots for POD models of semilinear optimal control problems
\thanks{Submitted August 4, 2021\funding{This  work  was  supported  by  the  U.S.  Department  of  Energy,  Office  of  Science,  Office  of  Advanced  Scientific Computing  Research,  Scientific  Discovery  through  the  Advanced  Computing  (SciDAC)  Program  through  the FASTMath Institute under Contract No.\ DE-AC02-06CH11357.
Stefan Ulbrich received support by the German Research Foundation (DFG) within the Collaborative Research
Center TRR 154 Project-ID 239904186 - TRR 154 "Mathematical Modelling,
Simulation and Optimization using the Example of Gas Networks", project A02,
and by the DFG within the Collaborative Research Center SFB 1194 Project-ID 265191195 - SFB 1194 "Interaction between Transport and Wetting Processes", project
B04.}}}
\begin{document}
\maketitle

\begin{abstract}
In PDE-constrained optimization, proper orthogonal decomposition (POD)
provides a surrogate model of a (potentially expensive) PDE discretization,
on which optimization iterations are executed.
Because POD models usually provide good approximation
quality only locally, they have to be updated during optimization. 
Updating the POD model is usually expensive, however, and therefore often impossible
in a model-predictive control (MPC) context. Thus, reduced models of
mediocre quality might be accepted.  We take the view of a simplified Newton
method for solving semilinear evolution equations to derive an algorithm that
can serve as an \emph{offline phase} to produce a POD model. Approaches that
build the POD model with  \emph{impulse response snapshots} can
be regarded as the first Newton step in this context.

In particular, POD models that are based on impulse response snapshots
are extended by adding a second simplified Newton
step.  This procedure improves the approximation quality of the POD model
significantly by introducing a moderate amount of extra computational costs
during optimization or the MPC loop.  We illustrate our findings with an
example satisfying our assumptions.
\end{abstract}

\begin{keywords}
Proper Orthogonal Decomposition, Snapshot Generation, Simplified
Newton Method
\end{keywords}

\begin{AMS}
65M60,35K20
\end{AMS}

\section{Introduction}\label{intro}

\ac{POD} is a well-known method to derive low-dimensional reduced-order models of
dynamical systems.
In the field of optimization of \acp{PDE}, \ac{POD} is employed
as a snapshot-based model order reduction technique to replace  expensive 
\ac{FEM} solves of a discretized PDE by computationally cheap
surrogates in the optimization iterations; see, for example,
\cite{afanasiev2001adaptive,kunisch1999control,kunisch2001galerkin,volkwein2001optimal,kunisch2002galerkin,arian2000trust,bergmann2008optimal,sachs2010pod}.
Because the control inputs change during the optimization, the quality of the
reduced-order model usually deteriorates, and an update or recomputation
may become necessary \cite{afanasiev2001adaptive,arian2000trust}. We
propose a \ac{POD} model that provides increased accuracy for
varying controls compared with common snapshot-based approaches.

We summarize the rationale of \ac{POD} and refer to
\cite{kunisch2002galerkin,benner2020model} for details.
\C{Let $H$ be a Hilbert space and $C([0,T], H)$ be the solution space
of an evolution equation. Note that $H$ can also be replaced by a
suitable finite-dimensional finite element subspace.}
For a given solution $y$ of the evolution equation, let
$\spn\{ y(t)\,|\,0 \le t \le T\} \subset H$ be the subspace of interest,
where $H$ is a Hilbert space.
\C{, which can often be approximated  well by a low-dimensional subspace.}%
Now assume that the span of a set of vectors $\{v_1,\ldots,v_n\} \subset H$
approximates $\spn\{y(t)\,|\, 0 \le t \le T\}$  well  
for trajectories $y$ of our interest. Then we can compute a (reduced) basis of length $k \le n$, which minimizes the squared reconstruction error of the vectors $v_1,\ldots,v_n$, by solving 
\begin{align*}
	\min_{\psi^1,\ldots,\psi^k} \frac{1}{2} \sum_{\ell=1}^n \left\|v_\ell - \sum_{i=1}^k (\psi^i, v_\ell)_H \psi^i \right\|_H^2 
	\text{ s.t. } (\psi^i, \psi^j)_H = \delta_{ij} \text{ for } 1 \le i \le j \le k.
\end{align*}
Such vectors $v_1,\ldots,v_n$ are called \emph{snapshots}. We can solve the
least-squares problem with the help of a \ac{SVD} of the correlation matrix
\[ K = \left( ( v_i, v_j)_H \right)_{1 \le i,j \le n} \]
and a suitable transformation into $H$ yielding $n$ basis vectors (see \cite[Sec.\,3]{kunisch2002galerkin}).
The resulting reduced basis vectors $\psi^1,\ldots,\psi^n$ in $H$ and the 
associated singular values $\lambda_1,\ldots,\lambda_n$ that are given by the 
aforementioned \ac{SVD} satisfy
\[ \sum_{\ell=1}^n \left\|v_\ell - \sum_{i=1}^k (\psi^i, v_\ell)_H \psi^i \right\|_H^2 = \sum_{j=k+1}^n \lambda_j,\]
(see \cite[Sec.\,3]{kunisch2002galerkin}), which allows for a trade-off between reconstruction accuracy and the number of basis vectors. The
computations are faster for fewer basis vectors $k$.
If linear system solves are the bottleneck in the numerical 
computations, the steps in the optimization procedure that
employ the \ac{POD} model have a 
complexity of $\mathcal{O}(k^3)$.

The selection of snapshots is crucial when building the \ac{POD} model.
The state iterates move through the state 
space during optimization, reducing the approximation quality of \ac{POD} 
models that were computed for snapshots in different regions.
Therefore, the locations of good snapshots may be unknown
when starting an optimization procedure, and different strategies
have been developed to handle this situation.
Hinze and Volkwein \cite{hinze2005proper} optimize the \ac{POD} model 
until convergence, compute additional snapshots, and compute a new model
from the increased snapshot set.
Sachs \etal \cite{arian2000trust,bergmann2008optimal,sachs2010pod} integrate
the update of a \ac{POD} model in a trust-region globalization strategy.
Schmidt \etal \cite{schmidt2013derivative} optimize the \ac{POD} model until
convergence and compute a new model from information at the final
iterate. Bott \cite{bott2015adaptive} uses error estimators in a
multilevel sequential quadratic programming (SQP) method to trigger model updates.
Gubisch and Volkwein \cite{gubisch2017proper} increase the number of 
basis vectors during the optimization iterations.

These adaptive strategies require expensive \emph{offline phases}
that update the model and are succeeded by cheap \emph{online phases} until the
next \emph{offline phase}. However, this approach may be difficult in the
context of \ac{MPC}, where there might not be enough time or compute resources
available for multiple offline phases. Ghiglieri and Ulbrich 
\cite{ghiglieri2014optimal} present an \ac{MPC} problem, for
which they combine uncontrolled and impulse response
snapshots---which can both be computed ahead---and keep the \ac{POD} model fixed during the whole \ac{MPC} loop. 
Their article provides a useful insight: the impulse response snapshots
are a fundamental solution of the linearized \ac{PDE}, thereby incorporating 
properties of convolution representations into the snapshot
ensemble. This is the starting point for our investigations in this work.

Convolution representations using impulse responses or \emph{Green's functions} 
are a common tool for analyzing dynamical systems. Bai and Skoogh 
\cite{bai2006projection} consider the Volterra series
representation of bilinear dynamical systems. They construct reduced models that
match a desired number of moments of the transfer functions of the kernels
of the Volterra series. 
Gu \cite{gu2009qlmor} states that Volterra series-based approaches may suffer from bad approximation quality outside a small region around the expansion
point. To alleviate this problem, he proposes to reformulate the polynomial
nonlinear system into so-called quadratic-linear differential algebraic
equations with larger system size.
Then, reduced models are constructed to match a desired number of moments
of the transfer functions of the reformulated system.
Flagg \etal \cite{flagg2015multipoint}
and Benner \etal \cite{benner2012interpolation,benner2018h2} derive optimality conditions of the corresponding 
approximation problems for bilinear and quadratic
bilinear systems. For example, in \cite{benner2018h2},
a truncated $\mathcal{H}_2$-norm, which includes the first three summands of the Volterra series, of a quadratic bilinear system is minimized. Importantly, the optimality
conditions do not depend on any input data of the system and can be satisfied approximately by the system matrices produced by an efficient iterative algorithm.

We propose a novel approach to improve the approximation quality and increase
the region of good approximation quality. Assume we want to
solve $E(y) = 0$, that is, compute the state for a fixed control. Then we can
compute an approximation $y^{(1)} = \bar{y} + d^{(1)}$ of $y$, where $d^{(1)}$
is the solution of one step of Newton's method,
\begin{align*}
	E_y(\bar{y})d^{(1)} &= -E(\bar{y}),
\end{align*}
and $E_y$ denotes the derivative of $E$ with respect to $y$.
Impulse response snapshots yield a high approximation quality
of the linear subspace, in which $d^{(1)}$ lives. However, the quality may
be poor outside a small neighborhood of the Taylor expansion point $\bar{y}$.
Now, we carry out a simplified second step of the Newton method,
\begin{align*}
	E_y(\bar{y})d^{(2)} &= -E(\bar{y} + d^{(1)}).
\end{align*}
The step is \emph{simplified} because we reuse the linearization
and  update only the right-hand side. We approximate the subspace containing 
$d^{(2)}$ also by means of additional impulse response snapshots.
Using \emph{simplified Newton steps} still yields local convergence and is used
in SQP methods as \emph{second-order corrections}
\cite{fletcher2002nonlinear,wachter2005line}.

\subsection{Contribution}

We formalize the described methodology for a class of semilinear
evolution equations with linear control inputs. We characterize the orbits
of $d^{(1)}$ and $d^{(2)}$ and prove bounds on corresponding \ac{POD} 
approximation errors.
We show how this allows us to compute the enriched \ac{POD}
bases by solving suitable impulse response problems as well as
how the latter occur in time discretizations.
We provide computational results that demonstrate the improved approximation 
properties for a semilinear evolution \ac{PDE} and a tracking-type \ac{OCP} 
constrained by it that fit into our framework of assumptions. 
Furthermore, we outline how \acp{MIOCP} may benefit from the use of
the proposed method.

\subsection{Structure of the paper}

In \cref{sec:simplified_newton} we recall the simplified Newton method
and its local convergence properties.
In \cref{sec:evolution_equations} we analyze $d^{(1)}$ and $d^{(2)}$,
their orbits and the \ac{POD} approximation errors.
\Cref{sec:galerkin} transfers the resulting approximation errors to
a Galerkin ansatz for the PDE on the \ac{POD} model.
\Cref{sec:augmented_pod_basis_computation} states
an algorithm that executes the two investigated Newton steps to compute
a combined enriched \ac{POD} basis.
\Cref{sec:computational_results} presents computational results.
In \cref{sec:miocp} we outline how the proposed method can
be used to solve relaxations of \acp{MIOCP}.
We give concluding remarks in \cref{sec:conclusion}.

\section{Simplified Newton method}\label{sec:simplified_newton}

We begin by stating the simplified Newton method, which is defined for
initial vectors $y^{(0)} \in Y$ and operators $F : Y \to Z$ satisfying
\cref{ass:simplified_newton_iteration}.

\begin{assumption}\label{ass:simplified_newton_iteration}
	Let $Y$, $Z$ be Banach spaces, let $F : Y \to Z$ be continuously Fr\'{e}chet differentiable
        on an open convex neighborhood $D$ of $y^{(0)} \in Y$, 
	and let $F'(y^{(0)})\in \mathcal{L}(Y,Z)$ be invertible.
\end{assumption}

\begin{algorithm}
\caption{Simplified Newton method}\label{alg:simplified_newton_iteration}  
\begin{algorithmic} 
	\REQUIRE{$F$ satisfying \cref{ass:simplified_newton_iteration},
	$y^{(0)} \in Y$}
	\FOR{$k = 1,\ldots$}
		\STATE{$d^{(k)} \gets \solve(F'(y^{(0)})d^{(k)} = -F(y^{(k-1)}))$}
		\STATE{$y^{(k)} \gets y^{(k-1)} + d^{(k)}$}		
	\ENDFOR
\end{algorithmic}
\end{algorithm}

The following local convergence result is, for example, shown in
\cite[Sec.\,4.2]{jay2000inexact}.

\begin{proposition}\label{prp:simplified_newton_local_convergence}
	Let $y^* \in Y$ be such that $F(y^*) = 0$, let $F$ be continuously differentiable in an open neighborhood of $y^*$, and
	and let $F'(y^*)\in \mathcal{L}(Y,Z)$ be invertible. Then there exists $\delta > 0$ such that for all
$y^{(0)} \in B_\delta(y^*)$, the iterates 
	$(y^{(k)})_{k\in\mathbb{N}}$ produced by \cref{alg:simplified_newton_iteration} satisfy
	$\left\|y^{(k+1)} - y^*\right\|_Y \le c \left\|y^{(k)} - y^*\right\|_Y$
	for some $0 < c < 1$.
\end{proposition}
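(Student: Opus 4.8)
The plan is to establish local linear convergence of the simplified Newton iteration by viewing the fixed-point map it induces and controlling its contraction factor near $y^*$. First I would define the iteration map $G(y) := y - F'(y^*)^{-1}F(y)$, so that one step of \cref{alg:simplified_newton_iteration} (with the frozen Jacobian taken at $y^*$ rather than at $y^{(0)}$; I will return to this point below) reads $y^{(k)} = G(y^{(k-1)})$. The root $y^*$ is a fixed point of $G$, since $F(y^*)=0$ implies $G(y^*)=y^*$. Differentiating, $G'(y) = I - F'(y^*)^{-1}F'(y)$, so $G'(y^*) = I - F'(y^*)^{-1}F'(y^*) = 0$. Thus at the solution the linearization of the iteration map vanishes, which is the structural reason a contraction exists nearby.

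The key steps, in order, are as follows. By continuity of $F'$ on a neighborhood of $y^*$ (part of the hypothesis) and continuity of the inverse map, $y \mapsto G'(y) = I - F'(y^*)^{-1}F'(y)$ is continuous with $G'(y^*)=0$. Hence, for any fixed $c$ with $0<c<1$, there exists $\delta>0$ such that $\|G'(y)\|_{\mathcal{L}(Y,Y)} \le c$ for all $y \in B_\delta(y^*)$. I would then shrink $\delta$ if necessary so that $B_\delta(y^*)$ is contained in the open convex neighborhood on which $F$ is continuously differentiable and $F'(y^*)^{-1}$ exists. On this ball I apply the mean value inequality for Fréchet-differentiable maps along the segment $[y^*, y]$, which is legitimate because convexity keeps the segment inside the domain:
\begin{align*}
  \left\| G(y) - G(y^*) \right\|_Y \le \sup_{t \in [0,1]} \left\| G'(y^* + t(y - y^*)) \right\|_{\mathcal{L}(Y,Y)} \left\| y - y^* \right\|_Y \le c \left\| y - y^* \right\|_Y.
\end{align*}
Since $G(y^*)=y^*$, this reads $\|G(y)-y^*\|_Y \le c\|y-y^*\|_Y$. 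In particular $G$ maps $B_\delta(y^*)$ into itself, so starting from any $y^{(0)} \in B_\delta(y^*)$ the whole orbit stays in the ball and the desired estimate $\|y^{(k+1)}-y^*\|_Y \le c\|y^{(k)}-y^*\|_Y$ holds for every $k$, which also yields convergence $y^{(k)}\to y^*$.

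The main obstacle is reconciling the statement of \cref{alg:simplified_newton_iteration}, which freezes the Jacobian at the \emph{initial} iterate $y^{(0)}$, with the clean analysis above, which freezes it at the \emph{solution} $y^*$. Freezing at $y^{(0)}$ means the true iteration map is $G_0(y) = y - F'(y^{(0)})^{-1}F(y)$, and now $G_0'(y^*) = I - F'(y^{(0)})^{-1}F'(y^*)$ need not vanish. The remedy is a perturbation argument: since $F'$ is continuous and invertibility is an open condition (the set of invertible operators is open in $\mathcal{L}(Y,Z)$ and inversion is continuous there), for $y^{(0)}$ close enough to $y^*$ the operator $F'(y^{(0)})$ is invertible and $F'(y^{(0)})^{-1}$ is close to $F'(y^*)^{-1}$, so $\|G_0'(y)\|$ can still be bounded by some $c<1$ on a small ball once $\delta$ is taken small enough to control \emph{both} the distance of $y$ from $y^*$ and the distance of $y^{(0)}$ from $y^*$. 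Making these two smallness requirements compatible — choosing $\delta$ so that every admissible $y^{(0)} \in B_\delta(y^*)$ gives a contraction on all of $B_\delta(y^*)$ — is the only genuinely delicate bookkeeping, and it is exactly the content invoked from \cite[Sec.\,4.2]{jay2000inexact}; everything else is the standard Banach fixed-point/contraction mechanism.
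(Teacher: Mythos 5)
Your proposal is correct, and it is worth noting that the paper does not actually prove this proposition: its ``proof'' is a pointer to \cite[Sec.\,4.2]{jay2000inexact}. Your contraction argument therefore supplies, in self-contained form, exactly the mechanism that the citation leaves implicit, and it matches the standard treatment of simplified Newton methods: fixed-point map with frozen Jacobian, vanishing (or small) derivative of that map near the root, mean value inequality on a convex ball, and invariance of the ball so the whole orbit stays where the estimates hold. You also correctly identify and repair the one real subtlety, namely that the algorithm freezes the Jacobian at $y^{(0)}$ rather than at $y^*$, and that the contraction constant must be uniform over all admissible $y^{(0)} \in B_\delta(y^*)$; this perturbation step also establishes that $F'(y^{(0)})$ is invertible, so the iteration is well defined, which the proposition tacitly requires. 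One simplification you may want to adopt: rather than analyzing $G(y) = y - F'(y^*)^{-1}F(y)$ first and then perturbing, work directly with $G_0(y) = y - F'(y^{(0)})^{-1}F(y)$ and use the identity
\begin{align*}
  I - F'(y^{(0)})^{-1}F'(y) \;=\; F'(y^{(0)})^{-1}\bigl(F'(y^{(0)}) - F'(y)\bigr).
\end{align*}
Continuity of $F'$ at $y^*$ gives $\|F'(y^{(0)}) - F'(y)\|_{\mathcal{L}(Y,Z)} \le 2\varepsilon$ for $y^{(0)}, y \in B_\delta(y^*)$, and the Banach lemma (Neumann series) gives $\|F'(y^{(0)})^{-1}\|_{\mathcal{L}(Z,Y)} \le 2\|F'(y^*)^{-1}\|_{\mathcal{L}(Z,Y)}$ once $\varepsilon\,\|F'(y^*)^{-1}\|_{\mathcal{L}(Z,Y)} \le 1/2$; hence $\sup_{y \in B_\delta(y^*)}\|G_0'(y)\|_{\mathcal{L}(Y,Y)} \le 4\varepsilon\,\|F'(y^*)^{-1}\|_{\mathcal{L}(Z,Y)} \le c$ uniformly in $y^{(0)} \in B_\delta(y^*)$. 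This collapses your ``two smallness requirements'' bookkeeping into a single estimate with one choice of $\delta$, after which the mean value inequality and $G_0(y^*) = y^*$ finish the proof exactly as you describe.
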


Now, we state the approximation of the zero of the state equation achieved by the simplified Newton iteration.

\begin{proposition}
	Let \cref{ass:simplified_newton_iteration}
	and the Lipschitz condition
	\[ \|F'(y^{(0)})^{-1}(F'(y) - F'(y^{(0)}))\|_{\mathcal{L}(Y,Y)} \le \omega_0 \|y - y^{(0)}\|_Y \quad\forall\, y\in D
	\]
	hold for some $\omega_0 > 0$ with
	$h_0 \coloneqq \omega_0 \|d^{(1)}\|_Y < 0.5$.
	Let $\overline{B_r(y^{(0)})} \subset D$ for
	$r = (1 - \sqrt{1 - 2 h_0})/\omega_0$.	
	Then there exists $0 < c < 1$ such that $\|d^{(k+1)}\|_Y \le c \|d^{(k)}\|_Y$, 	
	and for all iterations $k$ it holds that
	\begin{gather*}
		\|F(y^{(k)})\|_Z = \mathcal{O}(\|d^{(k+1)}\|_Y),
		\text{ and }
		\|F(y^{(k)})\|_Z \le \|F'(y^{(0)})\|_{\mathcal{L}(Y,Z)} c^k \|d^{(1)}\|_Y.
	\end{gather*}
\end{proposition}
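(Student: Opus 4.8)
The plan is to exploit the defining relations of \cref{alg:simplified_newton_iteration}, namely $F'(y^{(0)})d^{(k+1)} = -F(y^{(k)})$ and $y^{(k)} = y^{(k-1)} + d^{(k)}$, so as to reduce the whole statement to a single contraction estimate on the steps $d^{(k)}$. First I would combine the fundamental theorem of calculus, $F(y^{(k)}) = F(y^{(k-1)}) + \int_0^1 F'(y^{(k-1)} + t d^{(k)})d^{(k)}\dd t$, with $F(y^{(k-1)}) = -F'(y^{(0)})d^{(k)}$ to obtain the remainder representation $F(y^{(k)}) = \int_0^1 \bigl(F'(y^{(k-1)} + t d^{(k)}) - F'(y^{(0)})\bigr)d^{(k)}\dd t$. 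Applying $F'(y^{(0)})^{-1}$ and the defining relation for $d^{(k+1)}$ then gives the identity $d^{(k+1)} = -\int_0^1 F'(y^{(0)})^{-1}\bigl(F'(y^{(k-1)} + t d^{(k)}) - F'(y^{(0)})\bigr)d^{(k)}\dd t$, on which all estimates rest.

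The second half of the claim is then immediate. From $F(y^{(k)}) = -F'(y^{(0)})d^{(k+1)}$ I read off $\|F(y^{(k)})\|_Z \le \|F'(y^{(0)})\|_{\mathcal{L}(Y,Z)}\|d^{(k+1)}\|_Y$, which is exactly the asserted $\mathcal{O}(\|d^{(k+1)}\|_Y)$ bound; iterating the contraction $\|d^{(j+1)}\|_Y \le c\|d^{(j)}\|_Y$ yields $\|d^{(k+1)}\|_Y \le c^k\|d^{(1)}\|_Y$ and hence the stated inequality for $\|F(y^{(k)})\|_Z$. Thus everything hinges on the contraction, and the main obstacle is to guarantee that the iterates never leave the ball $\overline{B_r(y^{(0)})} \subset D$ on which the Lipschitz condition holds. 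I expect the naive route, bounding $\|y^{(k)} - y^{(0)}\|_Y \le \sum_{j=1}^k \|d^{(j)}\|_Y$ by a geometric series of ratio $c$, to fail: that sum equals $\|d^{(1)}\|_Y/(1-c)$, which a short calculation shows exceeds $r$.

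To repair this I would introduce the scalar Newton--Kantorovich majorant $p(t) \coloneqq \tfrac{\omega_0}{2}t^2 - t + \beta$ with $\beta \coloneqq \|d^{(1)}\|_Y$, whose smaller root is precisely $r$, and run the induced scalar simplified Newton iteration $t_0 \coloneqq 0$, $t_k \coloneqq \tfrac{\omega_0}{2}t_{k-1}^2 + \beta$. A brief monotonicity argument shows $(t_k)_k$ is nondecreasing and converges to $r$, so the increments $\delta_k \coloneqq t_k - t_{k-1} \ge 0$ satisfy $\sum_{j=1}^k \delta_j = t_k \le r$ and obey $\delta_{k+1} = \tfrac{\omega_0}{2}(t_k + t_{k-1})\delta_k$. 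I would then prove by induction the majorization $\|d^{(k)}\|_Y \le \delta_k$ together with $\|y^{(k)} - y^{(0)}\|_Y \le t_k \le r$: feeding the remainder identity, the Lipschitz bound, and the estimate $\|y^{(k-1)} + t d^{(k)} - y^{(0)}\|_Y \le t_{k-1} + t\,\delta_k$ into one another closes the induction and simultaneously keeps the iterates in $D$, while delivering the per-step bound $\|d^{(k+1)}\|_Y \le \omega_0\bigl(t_{k-1} + \tfrac{1}{2}\delta_k\bigr)\|d^{(k)}\|_Y$.

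Finally, since $t_{k-1} + \tfrac{1}{2}\delta_k = \tfrac{1}{2}(t_{k-1} + t_k) \le r$ for every $k$, each step contracts by the uniform factor $c \coloneqq \omega_0 r = 1 - \sqrt{1 - 2h_0}$, which lies in $(0,1)$ precisely because $0 < h_0 < 0.5$. Combined with the reductions above, this proves the full statement. The only delicate points I anticipate are justifying the integral representation, namely that the segment $\{y^{(k-1)} + t d^{(k)} : t \in [0,1]\}$ stays in $D$ (which follows from convexity of $\overline{B_r(y^{(0)})}$ and the induction hypothesis), and verifying the base case $t_1 = \beta \le r$, which reduces to the elementary inequality $h_0^2 \ge 0$.
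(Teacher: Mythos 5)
Your proof is correct, but it takes a genuinely different route from the paper. The paper's own proof is two lines: it reads off $\|F(y^{(k)})\|_Z = \mathcal{O}(\|d^{(k+1)}\|_Y)$ from the iteration identity $-F(y^{(k)}) = F'(y^{(0)})d^{(k+1)}$ (exactly as you do), and then simply \emph{cites} Theorem 2.5 of Deuflhard's monograph on Newton methods for the existence of the contraction constant $c$ and the resulting estimate --- note that the affine-covariant Lipschitz condition in the hypotheses is precisely Deuflhard's, which is why the citation is the natural move. What you have done instead is reconstruct, from scratch, essentially the proof of that cited theorem: the remainder identity $d^{(k+1)} = -\int_0^1 F'(y^{(0)})^{-1}\bigl(F'(y^{(k-1)} + t\,d^{(k)}) - F'(y^{(0)})\bigr)d^{(k)}\,\mathrm{d}t$, the scalar Kantorovich majorant $t_k = \tfrac{\omega_0}{2}t_{k-1}^2 + \beta$ whose limit is the smaller root $r$ of $p$, and the induction $\|d^{(k)}\|_Y \le \delta_k$, $\|y^{(k)} - y^{(0)}\|_Y \le t_k \le r$, yielding the explicit constant $c = \omega_0 r = 1 - \sqrt{1-2h_0} < 1$. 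Your argument checks out: the recursion $\delta_{k+1} = \tfrac{\omega_0}{2}(t_k + t_{k-1})\delta_k$ matches the per-step bound $\|d^{(k+1)}\|_Y \le \omega_0(t_{k-1} + \tfrac12\delta_k)\|d^{(k)}\|_Y$, the base case reduces to $h_0 \le 1 - \sqrt{1-2h_0}$ (i.e.\ $h_0^2 \ge 0$), and your observation that the naive geometric-series bound $\|d^{(1)}\|_Y/(1-c) > r$ is insufficient --- so that the majorant is genuinely needed to keep iterates inside $\overline{B_r(y^{(0)})} \subset D$, the only region where the Lipschitz condition is usable --- is exactly the right diagnosis and is the crux that the citation hides. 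What each approach buys: the paper's proof is short and defers the technical work to a standard reference; yours is self-contained, makes the hypothesis $\overline{B_r(y^{(0)})} \subset D$ do visible work, and exhibits the contraction factor explicitly. One trivial edge case worth a half-sentence in a polished write-up: if $d^{(1)} = 0$ then $h_0 = 0$ and your $c = \omega_0 r = 0$, so one should simply pick any $c \in (0,1)$, as all iterates vanish and the estimates hold trivially.
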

\begin{proof}
	The iteration reads $-F(y^{(k)}) = F'(y^{(0)})d^{(k+1)}$.
	This implies  $\|F(y^{(k)})\|_Z = \mathcal{O}(\|d^{(k+1)}\|_Y)$.
	The constant $c$ and the estimate follow from \cite[Thm\,2.5]{deuflhard2011newton}. 
\end{proof}

\section{Application to evolution equations}\label{sec:evolution_equations}

We analyze iterations $k=1$ and $k=2$ of \cref{alg:simplified_newton_iteration} 
for a class of evolution equations. The state equation is $E(y, u) = 0$,
where $E : Y \times U \to Z$ satisfies the following assumption.
\begin{assumption}\label{ass:simplified_newton_iterationE}
	Let $U,Y,Z$ be Banach spaces, and let $E : Y\times U \to Z$ be continuously
	Fr\'{e}chet differentiable and linear with respect to $u$. Moreover,
	for all $(\bar y,\bar u) \in Y\times U$ let
	$E_y(\bar y,\bar u)\in \mathcal{L}(Y,Z)$ be invertible.
\end{assumption}
Now fix some $(\bar y,\bar u) \in Y\times U$. We will later choose
$\bar y$ constant in time; see \cref{ass:setting}.
Moreover, often it makes sense to choose
$(\bar y,\bar u)$ as a steady-state solution, that is, $E(\bar y,\bar u)=0$,
but this is not required. Let some control $u\in U$ be given.
In order to compute a solution of $E(y,u)=0$, the first two steps of
\cref{alg:simplified_newton_iteration} are
\begin{align}
	E_y(\bar{y}, \bar{u})d^{(1)} &= -E(\bar{y}, u), & & y^{(1)} \coloneqq \bar{y} + d^{(1)},\text{ and} \label{eq:sl_nwt1} \\
	E_y(\bar{y}, \bar{u})d^{(2)} &= -E(y^{(1)}, u), & & y^{(2)} \coloneqq y^{(1)} + d^{(2)}. \label{eq:sl_nwt2}
\end{align}

In the following, the operator equation $E(y,u)=0$ represents a semilinear parabolic problem of the form
\begin{gather}\label{eq:nlse}
   \partial_t y(t)-A y(t)+ N(y(t))=F u(t),\quad y(0)=y_0, 
\end{gather}
where $A$ is an elliptic spatial operator, $N$ is a nonlinear term
of lower
order, and $F$ is a control operator. An appropriate setting to ensure
\cref{ass:simplified_newton_iterationE} will be given below for
particular examples.
\C{
The space $W$ is the suitable Bochner space for the considered evolution equation. We restrict ourselves to
semilinear evolution equations with differential operators generating strongly continuous semigroups. 
Therefore, we consider the state space of mild solutions $C([0,T], Y)$ in the remainder, 
where $Y$ denotes the spatial state space. Since an inner product is mandatory for the \ac{POD} methodology to 
work, we assume $Y$ to be a Hilbert space. 

Because the control enters linearly, we abbreviate $E_y(\bar{y}) \coloneqq E_y(\bar{y}, \bar{u})$ and $E_u \coloneqq E_u(\bar{y}, \bar{u})$, which removes
nonexisting dependencies from the notation.
We note that it is usually useful for optimization purposes
to consider a setting of evolution equations that satisfy maximal parabolic regularity and to consider 
solutions in spaces 
\[ W^{1,p} = \{ y \in L^p((0,T), V), \partial_t y \in L^{p'}((0,T), V^*) \} \hookrightarrow^c C([0,T], Y) \]
for $p > 1$, where $p'$ solves $\frac{1}{p} + \frac{1}{p'} = 1$ and $V$ is a Hilbert space that satisfies
$V \hookrightarrow^c Y \hookrightarrow^c V^*$.}

\subsection{Guiding example}\label{sec:guiding_example}

The following semilinear \ac{IBVP} serves as our guiding example throughout
the remainder of the article.
\begin{gather}\label{eq:ex_sl}
\left\{
\begin{aligned}
	\partial_t y(t) - a\Delta y(t) + b y(t)^3 - Fu(t) &= 0\quad &&\mbox{on $(0,T)\times
\Omega$},\\
        y    &= 0 \quad &&\mbox{on $(0,T)\times\partial\Omega$},\\
	y(0) &= y_0\quad &&\mbox{on $\Omega$}.
\end{aligned}\right.
\end{gather}
Here, $a,b>0$, $\Delta$ denotes the Dirichlet Laplacian, and $\Omega\subset
\R^d$, $d\in\{1,2,3\}$,
is an open domain that is convex or of class $C^2$. We set
$V \coloneqq H_0^1(\Omega)\cap H^2(\Omega)$,
$W \coloneqq H^1_0(\Omega)$,
$H \coloneqq L^2(\Omega)$,
and $\mathcal{H} \coloneqq
\left\{ y \in L^2(0,T;V)\,\vert\,\partial_t y \in L^2(0,T;H)\right\}$.
We work with the following data and spaces:
\begin{gather}\label{eq:ex_setting}
\begin{aligned}
&F \in H,\quad y_0 \in W,\quad
U = L^2(0,T),\quad Z = L^2(0,T;H)\times W,\\
&Y = \{y\in L^\infty(0,T;W) \cap \mathcal{H}\,|\, y(0)\in W\},\\
&\|y\|_Y=\|y\|_{L^\infty(0,T;W)}+\|y\|_{\mathcal{H}}
+\|y(0)\|_{W}.
\end{aligned}
\end{gather}
Note that $Y$ is well defined because of the continuous embedding 
${\mathcal H} \hookrightarrow C([0,T];W)$; see \cref{app:embedding}.
We have the following existence and uniqueness result.
\begin{proposition}\label{prp:ex_setting}
Consider the setting \eqref{eq:ex_setting}.
Then for any $u\in L^2(0, T)$ there exists a
unique solution $y\in Y
\hookrightarrow C([0,T]; W)$ of \eqref{eq:ex_sl}.
\end{proposition}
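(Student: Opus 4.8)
The plan is to read \eqref{eq:ex_sl} as a semilinear abstract Cauchy problem driven by the analytic semigroup generated by the Dirichlet Laplacian, to establish local existence by a fixed-point argument in the maximal-regularity space $\mathcal{H}$, to promote this to global existence and the full $Y$-regularity through energy estimates that exploit the favorable sign of the cubic term, and to obtain uniqueness from its monotonicity. First I would fix the mapping properties of the nonlinearity. Since $d \le 3$, the Sobolev embedding $W = H_0^1(\Omega) \hookrightarrow L^6(\Omega)$ gives $\|z^3\|_H = \|z\|_{L^6}^3 \le C\|z\|_W^3$ for a.e.\ $t$, so $z \mapsto z^3$ sends $L^\infty(0,T;W)$ into $L^\infty(0,T;H) \hookrightarrow L^2(0,T;H)$ and, via $y_1^3 - y_2^3 = (y_1-y_2)(y_1^2+y_1y_2+y_2^2)$ and Hölder, is Lipschitz on bounded subsets of $W$. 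Because $F \in H$ and $u \in L^2(0,T)$, the forcing $Fu$ lies in $L^2(0,T;H)$. The operator $-a\Delta$ with domain $V = H^2 \cap H_0^1$ enjoys $L^2$-maximal parabolic regularity, its real-interpolation trace space is $(H,V)_{1/2,2} = W$, and $H^2$-elliptic regularity holds because $\Omega$ is convex or of class $C^2$; these are the facts that make the linear solution operator map $(f,y_0) \in L^2(0,T;H) \times W = Z$ boundedly into $\mathcal{H} \cap C([0,T];W)$.

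For local existence I would define $\Phi(z)$ to be the unique $\mathcal{H}$-solution of $\partial_t y - a\Delta y = Fu - b z^3$, $y(0) = y_0$, and show that on a sufficiently short interval $[0,T^*]$ the map $\Phi$ is a self-map and a contraction on a closed ball of $C([0,T^*];W)$. This combines the maximal-regularity estimate with the local Lipschitz bound above, the contraction constant being made less than one by shrinking $T^*$; Banach's fixed-point theorem then yields a unique local solution carrying the full $Y$-regularity.

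To reach $[0,T]$ and control $\|\cdot\|_Y$, I would derive energy estimates by testing the equation with $y$ and with $\partial_t y$. The second gives, using $\int_\Omega y^3 \partial_t y = \tfrac14 \tfrac{d}{dt}\|y\|_{L^4}^4$, a bound on $\int_0^T \|\partial_t y\|_H^2$, on $\sup_t \|\nabla y(t)\|_H^2 = \sup_t\|y(t)\|_W^2$, and on $\sup_t\|y(t)\|_{L^4}^4$ in terms of $\|y_0\|_W$ and $\|Fu\|_{L^2(0,T;H)}$, because the cubic contribution has the right sign. Rewriting the PDE as $-a\Delta y = Fu - b y^3 - \partial_t y$ with a right-hand side now known to lie in $L^2(0,T;H)$ and applying $H^2$-elliptic regularity yields $y \in L^2(0,T;V)$, hence $y \in \mathcal{H}$ and a bound on $\|y\|_Y$. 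These a priori bounds exclude finite-time blow-up, so the local solution extends to all of $[0,T]$. Uniqueness follows by testing the difference $w = y_1 - y_2$ with $w$: monotonicity $(y_1^3-y_2^3)(y_1-y_2) \ge 0$ gives $\tfrac{d}{dt}\|w\|_H^2 \le 0$ with $w(0)=0$, so $w \equiv 0$. The continuity $y \in C([0,T];W)$ is the embedding $\mathcal{H} \hookrightarrow C([0,T];W)$ of \cref{app:embedding}.

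The main obstacle will be making the strong estimate rigorous: the test with $\partial_t y$ that produces the simultaneous $L^\infty(0,T;W)$- and $\mathcal{H}$-bounds is only formal on the fixed-point solution, so I would justify it by carrying it out on a Galerkin (or Yosida) approximation and passing to the limit, or by bootstrapping from the maximal-regularity representation. The second delicate point is matching the initial-data space $W$ exactly with the interpolation trace space $(H,V)_{1/2,2}$, which is where the convexity or $C^2$-regularity of $\Omega$, entering through $H^2$-elliptic regularity and the characterization of $D((-\Delta)^{1/2})$, is genuinely used.
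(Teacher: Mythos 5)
Your proposal is correct in outline but takes a genuinely different route from the paper. The paper's proof is essentially a citation argument: it observes that $\beta(s)=bs^3$ is the derivative of the convex function $g(s)=bs^4/4$, hence a continuous, nondecreasing (thus maximal monotone) graph with $\overline{D(\beta)}=\mathbb{R}$, checks $g(y_0)\in L^1(\Omega)$ via $W\hookrightarrow L^6(\Omega)$ and $Fu\in L^2(0,T;H)$, and then invokes Proposition 5.1 of Barbu's monograph \cite{barbu2010nonlinear} to obtain existence, uniqueness and the $\mathcal{H}$-regularity, finishing with the embedding $\mathcal{H}\hookrightarrow C([0,T];W)$ of \cref{app:embedding}. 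You instead reconstruct this result by hand: local existence by contraction in $C([0,T^*];W)$ using maximal $L^2$-regularity and the trace-space identity $(H,V)_{1/2,2}=W$, globalization via the $\partial_t y$-energy estimate exploiting the sign of $\int_\Omega y^3\partial_t y\,dx=\tfrac14\tfrac{d}{dt}\|y\|_{L^4}^4$, elliptic $H^2$-regularity to recover $y \in L^2(0,T;V)$, and uniqueness from monotonicity of $s\mapsto s^3$ --- which is precisely the structure (subgradient of a convex function, monotone graph) that the paper hands off to Barbu's theorem, and both proofs conclude with the same appendix embedding. What the citation buys is brevity and a ready-made rigorous treatment of exactly the technical point you flag as your main obstacle: justifying the formal testing with $\partial_t y$ (Barbu's proof does the approximation work once and for all). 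What your route buys is self-containedness and transparency about where each hypothesis enters --- the sign of $b$ for globalization, monotonicity for uniqueness, convexity or $C^2$-regularity of $\Omega$ for $D(-\Delta)=V$ --- and its local-existence part would survive for non-monotone locally Lipschitz perturbations, where the monotone-operator framework does not directly apply. Provided you actually carry out the Galerkin or Yosida approximation you describe, your argument is complete.
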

\begin{proof}
We apply \cite[Prop.\ 5.1]{barbu2010nonlinear} with
$\beta=\partial g$, where $g: \mathbb{R}\to\mathbb{R}$,
$g(s)=b s^4/4$, and $f = F u\in L^2(0,T; H)$.
Then $\beta : \mathbb{R}\to\mathbb{R}$, $\beta(s)=b s^3$
is continuous and monotonically nondecreasing, and thus the induced graph
is maximally monotone, where the domain satisfies $\overline{D(\beta)} =
\mathbb{R}$ and thus satisfies the assumptions of \cite[Prop.\ 5.1]{barbu2010nonlinear}.
Since the embedding $W\hookrightarrow L^6(\Omega)$ is
continuous for $d=1,2,3$, we have $g(y_0)\in L^1(\Omega)$.
Now \cite[Prop.\ 5.1]{barbu2010nonlinear}
and the continuous embedding
${\mathcal H} \hookrightarrow C([0,T]; W)$
yield the assertion.\qed
\end{proof}

A formal calculation shows that the following problems have
to be solved for the two simplified Newton steps given in \eqref{eq:sl_nwt1},
and \eqref{eq:sl_nwt2}:
\begin{align}
	\left\{ \begin{array}{rl}\label{eq:ex_sl_step1}
		(\partial_t - a\Delta + 3b\bar{y}^2)d^{(1)} &= 
-(\partial_t\bar y - a\Delta\bar y + b\bar{y}^3-Fu), \\ 
		d^{(1)}(0) &= y_0-\bar y(0),\quad d^{(1)}|_{(0,T)\times \partial\Omega}=0
    \end{array}\right.
    \\
    \left\{\begin{array}{rl}\label{eq:ex_sl_step2}
		(\partial_t - a\Delta + 3b\bar{y}^2)d^{(2)} &= -(b(y^{(1)})^3 - b\bar{y}^3 - 3b\bar{y}^2d^{(1)}), \\ 
		d^{(2)}(0) &= 0,\quad d^{(2)}|_{(0,T)\times \partial\Omega}=0.
	\end{array}\right.
\end{align}

To make this rigorous, we introduce the operator $E$:
\begin{equation}\label{eq:Edef}
 E: Y\times U \to Z,\quad
 E(y,u) \coloneqq \binom{\partial_t y - a\Delta y + b y^3 - Fu}{y(0)-y_0},
\end{equation}
and show that it satisfies \cref{ass:simplified_newton_iterationE}.

\begin{proposition}\label{prp:sl_derivative}
Let $E$ be given by \eqref{eq:Edef}. Then for any $\bar u\in U$
the equation $E(\bar y,\bar u)=0$ has a unique
solution $\bar y\in Y$. Moreover, $E$ is continuously
Fr\'echet differentiable. For $(y,u) \in Y \times U$
and any $(v,w)\in Y\times U$ it holds that
\begin{gather*}
 E_y(y,u) v=\binom{\partial_t v - a\Delta v + 3 b y^2 v}{v(0)},
 \quad\text{and}\quad
 E_u(y,u) w=\binom{-Fw}{0}.
\end{gather*}
Moreover, $E_y(y,u)\in {\mathcal L}(Y,Z)$ has a bounded inverse.
\end{proposition}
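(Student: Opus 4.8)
The plan is to treat the four assertions separately, relying on \cref{prp:ex_setting} for the solvability statement and on the Sobolev embedding $W = H^1_0(\Omega)\hookrightarrow L^6(\Omega)$ (valid for $d\le 3$) for everything involving the cubic term. Existence and uniqueness of $\bar y\in Y$ with $E(\bar y,\bar u)=0$ is immediate: this operator equation is precisely the guiding example \eqref{eq:ex_sl} with control $\bar u$ and initial value $y_0$, so \cref{prp:ex_setting} applies verbatim.

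For the Fr\'echet differentiability I would decompose $E$ into its affine-linear and its nonlinear parts. The maps $y\mapsto\partial_t y$, $y\mapsto -a\Delta y$, $y\mapsto y(0)$ and $u\mapsto -Fu$ are bounded and linear---the first two because $Y\hookrightarrow\mathcal H$ and $\Delta\colon V\to H$ is bounded, the third by the very definition of $\|\cdot\|_Y$ together with $\mathcal H\hookrightarrow C([0,T];W)$, and the last because $\|Fw\|_{L^2(0,T;H)}=\|F\|_H\|w\|_U$. These contribute their own derivatives directly, and in particular yield the stated $E_u$. The remaining work is the cubic Nemytskii operator $y\mapsto by^3$. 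From $\|y(t)^3\|_H=\|y(t)\|_{L^6}^3$ one sees that it maps $Y$ into $L^2(0,T;H)$, and the algebraic identity $(y+v)^3-y^3-3y^2v = 3yv^2+v^3$ combined with H\"older's inequality tuned to $W\hookrightarrow L^6$ gives a remainder bounded by $C(\|y\|_Y\|v\|_Y^2+\|v\|_Y^3)=o(\|v\|_Y)$, so the derivative is $v\mapsto 3by^2v$. Applying the same H\"older estimates to $(y_1^2-y_2^2)v=(y_1-y_2)(y_1+y_2)v$ shows that $y\mapsto(v\mapsto 3by^2v)$ is continuous from $Y$ into $\mathcal L(Y,L^2(0,T;H))$, which establishes that $E$ is continuously Fr\'echet differentiable with the claimed $E_y$.

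The main effort is the bounded invertibility of $E_y(y,u)$, that is, unique solvability in $Y$ of $\partial_t v - a\Delta v + 3by^2 v = g$, $v(0)=v_0$ for arbitrary $(g,v_0)\in Z$. I would first record that the principal part is an isomorphism: by maximal parabolic $L^2$-regularity for the Dirichlet Laplacian on the convex or $C^2$ domain $\Omega$---whose elliptic regularity gives $D(-a\Delta)=V$ and whose associated initial-trace space is the interpolation space $(H,V)_{1/2,2}=W$---the map $v\mapsto(\partial_t v-a\Delta v,\,v(0))$ is a bounded bijection from $Y$ onto $Z$, the $L^\infty(0,T;W)$ part of the $Y$-norm being supplied by $\mathcal H\hookrightarrow C([0,T];W)$. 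The zeroth-order term is genuinely lower order: since $y\in L^\infty(0,T;W)$ yields $c\coloneqq 3by^2\in L^\infty(0,T;L^3(\Omega))$, H\"older's inequality with $\tfrac12=\tfrac13+\tfrac16$ gives $\|cv\|_{L^2(0,T;H)}\le C\|c\|_{L^\infty(0,T;L^3)}\|v\|_{L^2(0,T;W)}$, so $v\mapsto cv$ is bounded from $Y$ into $L^2(0,T;H)$, and the embedding $Y\hookrightarrow L^2(0,T;W)$ is compact by the Aubin--Lions lemma because $V$ embeds compactly into $W$. Composing with the inverse of the principal part exhibits $E_y(y,u)$ as an isomorphism plus a compact operator, so by the Fredholm alternative it is boundedly invertible once it is injective. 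Injectivity follows from the energy estimate: testing the homogeneous equation with $v$ and using $3by^2\ge 0$ forces $v\equiv 0$. I expect this paragraph to be the technical heart---in particular, invoking maximal regularity so that the merely $L^\infty(0,T;L^3)$ potential is absorbed while the solution still lands in the strong space $\mathcal H$; the differentiability estimates are routine once $W\hookrightarrow L^6$ is available.
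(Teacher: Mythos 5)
Your proof is correct and establishes all four claims, but your invertibility argument takes a genuinely different route from the paper's. On differentiability the two proofs are essentially the same H\"older-plus-$W\hookrightarrow L^6(\Omega)$ computation: the paper packages it as boundedness of the trilinear map $(y_1,y_2,y_3)\mapsto y_1y_2y_3$ into $L^2(0,T;H)$, which immediately yields that $N(y)=y^3$ is $C^\infty$ with derivative $v\mapsto 3y^2v$, whereas you expand the cube and estimate the remainder $3yv^2+v^3$ directly; that difference is cosmetic. For the bounded inverse of $E_y(y,u)$, however, the paper does not split off the potential: it observes that $3by(t)^2\ge 0$ makes the full form $a(t;v_1,v_2)=(\nabla v_1,\nabla v_2)_{H^d}+\int_\Omega 3b\,y(t)^2v_1v_2\,dx$ bounded and coercive on $W\times W$ uniformly in $t$, invokes standard parabolic (Lions-type) theory to obtain a unique weak solution in $\mathcal{W}=\{w\in L^2(0,T;W)\,|\,\partial_t w\in L^2(0,T;H^{-1}(\Omega))\}$ together with an a priori estimate, bootstraps (since $3by^2v\in L^2(0,T;H)$, $v$ solves a heat equation with $L^2(0,T;H)$ data and $W$ initial value) to conclude $v\in Y$, and finishes with the closed graph theorem. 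You instead make the heat part $v\mapsto(\partial_t v-a\Delta v,\,v(0))$ an isomorphism $Y\to Z$ via maximal $L^2$-regularity, treat the potential as a compact perturbation using Aubin--Lions, and reduce invertibility to injectivity by the Fredholm alternative, with injectivity supplied by your energy estimate. Both arguments are sound, and in fact your ``principal part isomorphism'' is the same maximal-regularity ingredient the paper uses in its bootstrap step. The trade-offs: the paper's route needs no compactness at all, whereas your Rellich/Aubin--Lions step requires $\Omega$ to be bounded, which the stated hypotheses (``open domain that is convex or of class $C^2$'') do not literally guarantee; coercivity also hands the paper existence and uniqueness in one stroke, while you must prove injectivity separately. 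What your route buys is a cleaner abstract structure (isomorphism plus compact perturbation) that bypasses weak-solution theory and the closed graph theorem entirely.
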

\begin{proof}
By the definition of $Y,U,Z$ all linear parts of
\eqref{eq:Edef} are in ${\mathcal L}(Y,Z)$. Moreover, the mapping
$N: y\in Y \mapsto y^3\in L^2(0,T;H)$
is continuously Fr\'echet differentiable. In fact, the trilinear form
$B: (y_1,y_2,y_3)\in Y\times Y\times Y \mapsto y_1 y_2 y_3 \in L^2(0,T;H)$
is  bounded because
\begin{align*}
 \|y_1(t) y_2(t) y_3(t)\|_{H} &\le 
\|y_1(t)\|_{L^6(\Omega)} \|y_2(t)\|_{L^6(\Omega)} \|y_3(t)\|_{L^6(\Omega)}\\
&\le C^3_1 \|y_1(t)\|_{W} \|y_2(t)\|_{W} \|y_3(t)\|_{W}
\end{align*}
for some $C_1 > 0$ and thus
\[
 \|y_1 y_2 y_3\|_{L^2(0,T;H)}\le \sqrt{T} \|y_1 y_2 y_3\|_{L^\infty(0,T;H)}
 \le \sqrt{T} C^3 \|y_1\|_{Y} \|y_2\|_{Y} \|y_3\|_{Y}.
\]
Hence, $B$ is infinitely many times continuously Fr\'echet differentiable,
and by the chain rule also $N(y)=B(y,y,y)$ with derivative
$v\in Y \mapsto 3 B(y,y,v)=3 y^2 v\in L^2(0,T;H)$.

Furthermore, since $3 b y^2\in L^\infty(0,T;L^3(\Omega))$, we have for
$v_1$, $v_2\in W$
\[ \int_\Omega v_1 3 b y(t)^2 v_2\,dx
\le 3b C^2_2 \|y\|_Y^2 \|v_1\|_{L^3(\Omega)} \|v_2\|_{L^3(\Omega)}
\le 3b C^2_2 C_3^2 \|y\|_Y^2 \|v_1\|_{W} \|v_2\|_{W}
\]
for some $C_2$, $C_3 > 0$. Hence,
\[
  a(t;v_1,v_2)=(\nabla v_1,\nabla v_2)_{H^d}+\int_\Omega v_1 3 b y(t)^2
v_2\,dx
\]
defines uniformly in $t$ a bounded and coercive bilinear form on
$W\times W$. Standard parabolic theory yields
a unique solution of $E_y(y,u)v=z$ for all $z\in Z$ with
$v\in \mathcal{W} \coloneqq \{w\in L^2(0,T;W)\,|\,\partial_t w\in
L^2(0,T;H^{-1}(\Omega))\}$ with $\|v\|_\mathcal{W} \le C\|z\|_Z$
for a constant $C$ independent of $z$.
Then $3 b y^2 v\in L^2(0,T; H)$ and thus $v\in Y$ by \cref{prp:ex_setting}. Hence, $E_y(y,u)^{-1}\in {\mathcal L}(Z,\mathcal{W} )$ and
$E_y(y,u)^{-1}: Z \to Y$. Now $E_y(y,u)^{-1}\in {\mathcal L}(Z,Y)$
follows from the closed graph theorem. Alternatively, one can apply standard
parabolic regularity theory; see, for example,  \cite[7.1, Thm. 5]{evans1998partial}.
\end{proof}
Hence, for the setting \eqref{eq:ex_setting}
we have justified that the formally derived
simplified Newton steps in \eqref{eq:sl_nwt1} and \eqref{eq:sl_nwt2}
are well defined and have the desired regularities.

Since $by^3 \in L^2(0,T;H)$ for $y\in Y$, the uniqueness of the mild
solution of $E(y,u)=0$ and a bootstrapping argument imply that $y$
can be represented by the variation of constants formula
\begin{align}\label{eq:ex_sl_se}
	y(t) = S(t)y_0 + \int_0^t S(t - s)(Fu(s) - by^3(s))\dd s,
\end{align}
where $(S(t))_{t\ge 0}$ denotes the strongly continuous semigroup generated by the
Dirichlet Laplacian (scaled by $a > 0$) on $H$. Similarly, the solution
$v\in Y$ of $E_y(y,u)v = (z,0)$ for $(z,0)\in Z$ can be represented by
\begin{align}\label{eq:ex_sl_le}
v(t) = \int_0^t S(t - s)(z(s) - 3 by^2(s)v(s))\dd s.
\end{align}

\subsection{General framework}\label{sec:general_framework}

This section provides our standing assumptions on the considered
\acp{IBVP} \eqref{eq:nlse}.
The guiding example presented above meets the assumptions.
We associate with \eqref{eq:nlse} an operator $E$,
\begin{equation}\label{eq:Edefgen}
 E: Y\times U \to Z,\quad
 E(y,u)=\binom{\partial_t y - Ay + N(y) - Fu}{y(0)-y_0},
\end{equation}
where boundary conditions are included in the definition of $Y$.
We will work under the following assumption.

\begin{assumption}\label{ass:setting}
        Let $V\hookrightarrow W \hookrightarrow H$ be Hilbert spaces
        with dense imbeddings. Assume that with $y_0\in W$, $F\in H$,
        $U=L^2(0,T)$ and appropriate spaces
        $Y\hookrightarrow L^2(0,T;V)\cap C([0,T],H)$, 
        $Z \hookrightarrow L^2(0,T;H) \times W$ the operator
        $E$ defined in \eqref{eq:Edefgen} satisfies
        \cref{ass:simplified_newton_iterationE}.
        Moreover, for any time-independent state $\bar y\in Y$ the operator
        $B:=A-N_y(\bar y): D(B) \to H$ generates a strongly continuous
        semigroup $(T(t))_{t \ge 0}$.
\end{assumption}

\begin{remark}\mbox{}
\begin{itemize}
\item If $A: D(A)\to H$ generates a strongly continuous semigroup
      and $C:=-N_y(\bar y) \in \mathcal{L}(H,H)$, then $B=A + C$ generates a
      strongly continuous semigroup on $H$; see, for example, 
      Corollary 3.5.6 in \cite{arendt2011vector}.
      Thus $(T(t))_{t \ge 0}$ is well defined.\\
      However, any setting where $A + C$ generates
	  a strongly continuous semigroup is allowed.
	  This is, for example, the case for the perturbation
	  $C = \bar{y} \cdot \nabla$ (Oseen semigroup)
	  if $A$ is the Stokes operator; see
	  \cite{miyakawa1982nonstationary}.
\item By \cref{ass:setting}, for any time-independent state
      $\bar y\in Y$ and all $\bar u\in U$, $(z,v_0)\in Z$ the linearized
      equation $E_y(\bar y,\bar u)v=(z,v_0)$
      has a unique solution $v\in Y$. Semigroup theory allows one to represent
      $v$ as the unique mild solution
      \begin{equation}\label{eq:mild}
         v(t)=T(t) v_0+\int_0^t T(t-s) z(s)\,ds.
      \end{equation}
\item It makes sense to consider linearizations at stationary states
      $\bar y$. For example, since $a > 0$ and $b > 0$ in our guiding example
      we may expect a damping behavior toward a stationary state. Moreover,
      in many applications a stabilization around a stationary state by
      optimal control is relevant.
\end{itemize}
\end{remark}

\subsection{A convolution formula for the first Newton step} 

We investigate the first simplified Newton step
and derive a convolution formula for $d^{(1)}(t)$.
We fix a linearization point $\bar y \in Y$ that is constant in time,
that is, $\bar y(t)=\bar y_0$ for some $\bar y_0\in V$.

The first Newton step \eqref{eq:sl_nwt1} for \eqref{eq:Edefgen}
 with $C=-N_y(\bar y)$ is
\begin{align} \label{eq:newt1}
 \binom{(\partial_t - A - C) d^{(1)}}{d^{(1)}(0)}=
\binom{A\bar y-N(\bar y)+F u}{y_0 - \bar{y}_0}.
\end{align}
We will show that the solution $d^{(1)}\in Y$ of \eqref{eq:newt1}
can be computed from the solutions of the following
problems:
\begin{align} \label{eq:y0_offset}
 \binom{(\partial_t - A - C) v}{v(0)}&=\binom{A\bar y-N(\bar y)}{y_0 -
\bar{y}_0},\text{ and}\\
\label{eq:ir0}
 \binom{(\partial_t - A - C) w}{w(0)}&=\binom{0}{F}.
\end{align}
Because of the structure of \eqref{eq:ir0}, we call $w \in C([0,T];H)$
the impulse response for the right-hand side (impulse) $F$.
Now the following holds.

\begin{lemma}\label{lem:yirscf}
  Let \cref{ass:setting} hold. 
  Let $v\in Y$ solve \eqref{eq:y0_offset}, and let
  $w\in C([0,T];H)$ solve \eqref{eq:ir0}. 
  Then the solution $d^{(1)}\in Y$ of \eqref{eq:newt1} is given by
  \begin{align*}
    d^{(1)}(t) &= v(t) + \int_0^t w(t - s)u(s) \dd s.
  \end{align*}
  If $F\in W$, then we have $w\in Y$.
\end{lemma}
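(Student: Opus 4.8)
The plan is to exploit the linearity of the linearized operator $E_y(\bar y,\bar u)$, whose action on $\psi\in Y$ is $\psi\mapsto\binom{(\partial_t-A-C)\psi}{\psi(0)}$, together with the mild solution representation \eqref{eq:mild} and the fact that the control value $u(s)$ is scalar. Problems \eqref{eq:newt1} and \eqref{eq:y0_offset} share this operator on the left, and their right-hand sides differ only by the control contribution. Hence I would first subtract the two problems in order to isolate the part of $d^{(1)}$ that is driven by $u$.

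Concretely, set $\phi\coloneqq d^{(1)}-v$. Both $d^{(1)}$ and $v$ lie in $Y$, so $\phi\in Y$, and applying the bounded linear operator $E_y(\bar y,\bar u)\in\mathcal{L}(Y,Z)$ to the difference of \eqref{eq:newt1} and \eqref{eq:y0_offset} gives
\[
 E_y(\bar y,\bar u)\phi=\binom{A\bar y-N(\bar y)+Fu}{y_0-\bar y_0}-\binom{A\bar y-N(\bar y)}{y_0-\bar y_0}=\binom{Fu}{0}.
\]
Thus $\phi$ is the unique solution of $(\partial_t-A-C)\phi=Fu$ with $\phi(0)=0$. Since $F\in H$ and $u\in L^2(0,T)$, we have $Fu\in L^2(0,T;H)$, so the mild solution formula \eqref{eq:mild} applies with $z=Fu$ and zero initial value and yields $\phi(t)=\int_0^t T(t-s)Fu(s)\dd s$.

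Next I would identify the impulse response. Equation \eqref{eq:ir0} is the linearized equation with data $z=0$ and initial value $F$, so \eqref{eq:mild} gives $w(t)=T(t)F$; in particular $w\in C([0,T];H)$ by strong continuity of $(T(t))_{t\ge 0}$. Because $u(s)$ is a scalar and $T(t-s)$ is linear, $T(t-s)Fu(s)=u(s)\,T(t-s)F=w(t-s)u(s)$, whence $\phi(t)=\int_0^t w(t-s)u(s)\dd s$. Adding $v$ back recovers the claimed formula $d^{(1)}(t)=v(t)+\int_0^t w(t-s)u(s)\dd s$.

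It remains to establish the regularity upgrade. For $F\in H$ the representation $w(t)=T(t)F$ only guarantees $w\in C([0,T];H)$. If instead $F\in W$, then $(0,F)\in Z$, since the initial-value component of $Z$ is $W$; the invertibility of $E_y(\bar y,\bar u)\colon Y\to Z$ granted by \cref{ass:setting} then forces the unique solution $w$ of \eqref{eq:ir0} to lie in $Y$. Alternatively one may invoke parabolic smoothing of $(T(t))_{t\ge 0}$. This last step is the only genuinely delicate point: one must make precise that $(0,F)\in Z$ for $F\in W$ and that $E_y^{-1}$ carries it into $Y$, whereas the convolution identity itself is an immediate consequence of linearity and \eqref{eq:mild}.
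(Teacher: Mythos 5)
Your proof is correct and follows essentially the same route as the paper's: both rest on linear superposition, the mild-solution representation \eqref{eq:mild} with $w(t)=T(t)F$, and the scalar nature of $u(s)$ to turn $\int_0^t T(t-s)Fu(s)\dd s$ into the convolution $\int_0^t w(t-s)u(s)\dd s$, with the $F\in W$ regularity claim handled identically via \cref{ass:setting}. The only difference is organizational (you subtract \eqref{eq:y0_offset} from \eqref{eq:newt1} before invoking \eqref{eq:mild}, whereas the paper writes the full mild solution of \eqref{eq:newt1} and splits the integral), which is not a substantive departure.
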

\begin{proof}
By \cref{ass:setting} the problem \eqref{eq:newt1} has a unique solution
$d^{(1)}\in Y$ that can also be represented as a mild solution (see
\eqref{eq:mild}):
      \[
         d^{(1)}(t)=T(t) (y_0-\bar y_0)+\int_0^t T(t-s) (A\bar y-N(\bar y)+Fu(s))\dd s.
      \]
Now set $v(t)=T(t) (y_0-\bar y_0)+\int_0^t T(t-s) (A\bar y-N(\bar y))\dd s$
and $w(t)=T(t) F$. Then $v$ is the mild solution of \eqref{eq:y0_offset},
$w$ is the mild solution of \eqref{eq:ir0}, and the claimed representation
of $d^{(1)}(t)$ follows. Moreover, if $F\in W$, then \cref{ass:setting}
implies that $w$ solving \eqref{eq:ir0} is in $Y$.
\end{proof}

\subsection{A discrete convolution formula for the first Newton step}\label{sec:first_newton_step_time_discrete}

We consider now a $\theta$-scheme for time discretization that comprises the
implicit Euler scheme ($\theta=1$) and the Crank--Nicolson scheme
($\theta=1/2$).

Let $0 = t_0 < \ldots < T_K = T$,
$\Delta t = t_{k+1} - t_k$ be a uniform time grid, and $(u_k)_{k\in
\{0,\ldots,K - 1\}}$ be an interval-wise constant
discretization of the control $u$. We approximate \eqref{eq:nlse} by
\[
  \frac{y_{k+1}-y_k}{\Delta t}-A (\theta y_{k+1}+(1-\theta) y_k)
+\theta N(y_{k+1})+(1-\theta) N(y_k)+F u_k,~~
0\le k <K,
\]
where $\theta\in [\frac{1}{2},1]$.
 Then the discrete analogue of \eqref{eq:newt1} is
\begin{align}
\notag
  \frac{d^{(1)}_{k+1}-d^{(1)}_k}{\Delta t}-(A+C) (\theta d^{(1)}_{k+1}+(1-\theta) d^{(1)}_k)&=
  A\bar y-N(\bar y)+F u_k, ~
0\le k<K,\\
\label{eq:d1k}
d^{(1)}_0&=y_0 - \bar{y}_0,
\end{align}
and the one of \eqref{eq:y0_offset} is
\begin{align}
\label{eq:vk}
\begin{split}
  \frac{v_{k+1}-v_k}{\Delta t}-(A+C) (\theta v_{k+1}+(1-\theta) v_k)&=
  A\bar y-N(\bar y), ~~
0\le k <K,\\ v_0&=y_0 - \bar{y}_0.
\end{split}
\end{align}
Now consider the following discretization of \eqref{eq:ir0}:
\begin{align}
\label{eq:wk}
\begin{split}
  \frac{w_{k+1}-w_k}{\Delta t}-(A+C) (\theta w_{k+1}+(1-\theta) w_k)&=0,\quad
0\le k <K,\\
(I+\Delta t (1-\theta) (A+C)) w_0 &=F.
\end{split}
\end{align}

Then we obtain the following discrete convolution formula for the $\theta$-scheme.

\begin{proposition}\label{prp:conv_theta}
Let $(v_k)$ and $(w_k)$ solve \eqref{eq:vk} and \eqref{eq:wk}. Then the
first Newton step $(d^{(1)}_k)$ for the $\theta$-scheme
\eqref{eq:d1k} can be represented by
\begin{align*}
	d^{(1)}_{k} = v_k +
		\Delta t \sum_{j=0}^{k-1} w_{k - j} u_{j},
\end{align*}
where we use the convention that the sum vanishes for $k=0$.
\end{proposition}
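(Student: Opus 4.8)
The plan is to prove the formula by induction on $k$, mirroring the continuous convolution argument of \cref{lem:yirscf}. First I would rewrite each of the three schemes in implicit one-step form by collecting the index-$(k+1)$ terms. Setting $g \coloneqq A\bar y - N(\bar y)$ (constant in time, since $\bar y$ is) and abbreviating $P \coloneqq I - \theta\,\Delta t\,(A+C)$ and $Q \coloneqq I + (1-\theta)\,\Delta t\,(A+C)$, the recursions \eqref{eq:d1k}, \eqref{eq:vk}, and \eqref{eq:wk} become
\begin{gather*}
 P d^{(1)}_{k+1} = Q d^{(1)}_k + \Delta t\,(g + F u_k), \qquad
 P v_{k+1} = Q v_k + \Delta t\, g,\\
 P w_{k+1} = Q w_k,
\end{gather*}
with $d^{(1)}_0 = v_0 = y_0 - \bar y_0$ and the initialization $Q w_0 = F$. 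The operator $P$ is invertible (this is precisely the well-posedness of the implicit step), which I will use to cancel it at the very end.

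The base case $k=0$ is immediate: the sum is empty and $d^{(1)}_0 = v_0$. For the inductive step I would substitute the hypothesis $d^{(1)}_k = v_k + \Delta t\sum_{j=0}^{k-1} w_{k-j}u_j$ into the first recursion, distribute $Q$ across the sum, and then convert each contribution into a $P$-image using the other two recursions: the identity $Q v_k + \Delta t\,g = P v_{k+1}$ turns the $v$-part into $P v_{k+1}$, while $Q w_{k-j} = P w_{k-j+1}$ turns each summand into $P w_{(k+1)-j}\,u_j$. This leaves the single fresh term $\Delta t\, F u_k$ to be accounted for.

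The key step — the one I expect to carry the whole argument — is the treatment of this fresh term, and here the \emph{nonstandard} initialization $Q w_0 = F$ is decisive. Combined with the $w$-recursion it yields $F = Q w_0 = P w_1$, so $\Delta t\, F u_k = \Delta t\, P w_{(k+1)-k}\, u_k$ is exactly the missing $j=k$ summand of the shifted convolution. Collecting everything under $P$ gives $P d^{(1)}_{k+1} = P\bigl(v_{k+1} + \Delta t\sum_{j=0}^{k} w_{(k+1)-j} u_j\bigr)$, and applying $P^{-1}$ completes the induction. I would stress that the naive choice $w_0 = F$ would misalign the indices and break the telescoping; the initialization built into \eqref{eq:wk} is engineered precisely so that a newly entering control $u_k$ couples to the impulse response at lag one, which is the discrete counterpart of the relation $w(t)=T(t)F$ underlying \cref{lem:yirscf}.
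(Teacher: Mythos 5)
Your proof is correct and is essentially the paper's argument run in the opposite direction: both hinge on the same identity, namely that the nonstandard initialization in \eqref{eq:wk} --- $Qw_0 = F$ in your notation, equivalently $Pw_1 = F$ --- is exactly what makes the fresh control term $\Delta t\, F u_k$ slot in as the $j=k$ summand of the shifted convolution. The paper defines $e_k \coloneqq \Delta t \sum_{j=0}^{k-1} w_{k-j} u_j$, computes the difference quotient $(e_{k+1}-e_k)/\Delta t$ directly, uses \eqref{eq:wk} together with its initialization to show that $e_k$ satisfies the scheme with source $F u_k$ and zero initial value, and concludes by superposition that $v_k + e_k$ solves \eqref{eq:d1k}, whereas your induction performs the same telescoping but cancels $P$ at the end, which makes explicit the invertibility of $I - \theta\Delta t(A+C)$ that the paper leaves implicit in the well-posedness (uniqueness) of the $\theta$-scheme.
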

\begin{proof}
Set $e_k \coloneqq \Delta t \sum_{j=0}^{k-1} w_{k - j} u_{j}$.
Then $e_0=0$, and by using \eqref{eq:wk} we have
\begin{align*}
\frac{e_{k+1}-e_k}{\Delta t}&=\sum_{j=0}^{k} (w_{k+1 - j}-w_{k - j}) u_{j}
+w_0 u_k\\
&=(A+C) \Delta t \sum_{j=0}^{k}  (\theta w_{k+1 - j}+(1-\theta)w_{k - j}) u_{j}
+w_0 u_k\\
&=(A+C) (\theta e_{k+1}+(1-\theta) e_k)+(I+(A+C) \Delta t (1-\theta)) w_0 u_k\\
&=(A+C) (\theta e_{k+1}+(1-\theta) e_k)+F u_k.
\end{align*}
Now by superpositon $v_k+e_k$ satisfies \eqref{eq:d1k} as asserted.
\end{proof}

\subsection{Subspace characterization and approximation of the first Newton step}\label{sec:approximation_newt1}

We use the convolution formula to characterize the orbit of the first Newton step
for arbitrary controls $u\in U$. This will be exploited to obtain the
reduced basis for the \ac{POD} model proposed in this article.
To state the result precisely, we need further notation.
For a function $f \in L^p(0,T; X)$ for some Banach space $X$ we
write
\[ f([0,T]) \coloneqq \bigcap_{N \subset [0,T], \lambda(N) = 0}
\overline{\{ f(t) \,|\, t \in [0,T] \setminus N \}}^X, \]
where $\lambda$ denotes the Lebesgue measure and we call $f([0,T])$
the \emph{essential range} of $f$.

\begin{theorem}\label{thm:first_newton_step_spaces}
	Let $\bar{y}$ be a given linearization point
	of $E$, $v$ solve \eqref{eq:y0_offset}, $w$ solve \eqref{eq:ir0},
	$u \in U$ be arbitrary, and $d^{(1)}$ solve \eqref{eq:newt1}. Then
	\begin{align*}
    	d^{(1)}(t) - v(t)
    	\in \overline{\spn w([0,T])}^W.
	\end{align*}
	\C{where $w([0,T])$ denotes the essential range of $w$.}%
	%
	%
	The discrete analogs $(d^{(1)}_k)$, $(v_k)$, and $(w_k)$ for
	the $\theta$-scheme \eqref{eq:d1k}, \eqref{eq:vk}, and \eqref{eq:wk}
	satisfy $d^{(1)}_k - v_k \in \spn \{ w_1,\ldots,w_{k} \}$.
\end{theorem}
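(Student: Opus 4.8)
The plan is to handle the two claims separately; both reduce to the convolution formulas already established, and essentially all of the work sits in the continuous case. For the continuous statement I would first invoke \cref{lem:yirscf} to write $d^{(1)}(t)-v(t)=\int_0^t w(t-s)u(s)\dd s$, read as a Bochner integral with values in $W$. The integrability in $W$ is where the $W$-regularity of $w$ enters: since $w\in L^2(0,T;W)$ (e.g.\ because $w\in Y\hookrightarrow L^2(0,T;V)$ in the relevant setting) and $u\in L^2(0,T)$, Cauchy--Schwarz gives $s\mapsto w(t-s)u(s)\in L^1(0,t;W)$. Setting $M:=\overline{\spn w([0,T])}^W$, a closed linear subspace of $W$, the claim becomes that this Bochner integral lies in $M$.

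The core of the argument is the standard fact that closed linear subspaces are invariant under Bochner integration, which I would prove by duality. Let $\phi\in W^*$ annihilate $M$; pulling $\phi$ through the integral yields $\phi(\int_0^t w(t-s)u(s)\dd s)=\int_0^t \phi(w(t-s))u(s)\dd s$. To see the right-hand side vanishes I need $\phi(w(\tau))=0$ for a.e.\ $\tau$, which in turn follows from the measure-theoretic fact that a strongly measurable function takes values in its own essential range almost everywhere, so that $w(\tau)\in w([0,T])\subseteq M$ for a.e.\ $\tau$; this is proved by covering the complement of $w([0,T])$ by countably many balls with null preimage. Since $\tau\mapsto t-\tau$ preserves Lebesgue null sets, $\phi(w(t-s))=0$ for a.e.\ $s$, hence the integral vanishes. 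As $\phi$ ranged over all annihilators of the closed subspace $M$, Hahn--Banach gives $d^{(1)}(t)-v(t)\in M$.

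The discrete statement is then purely algebraic. Applying \cref{prp:conv_theta} gives $d^{(1)}_k-v_k=\Delta t\sum_{j=0}^{k-1}w_{k-j}u_j$, and as $j$ runs through $0,\ldots,k-1$ the index $k-j$ runs through $1,\ldots,k$. Thus the right-hand side is a finite linear combination of $w_1,\ldots,w_k$ with scalar coefficients $\Delta t\,u_j$, which is exactly $d^{(1)}_k-v_k\in\spn\{w_1,\ldots,w_k\}$; no closure is required because the sum is finite.

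The main obstacle is the continuous part, concretely the two measure-theoretic ingredients that make the Hahn--Banach reduction work: verifying $W$-integrability of the convolution (so that continuous functionals may be exchanged with the integral), and justifying that $w$ takes values in its essential range almost everywhere (so that annihilators of $M$ kill the integrand a.e.). Once these are secured, invariance of the Bochner integral under closed subspaces finishes the continuous claim, and the discrete claim carries no analytic difficulty.
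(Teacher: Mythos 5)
Your proposal is correct, but it proves the continuous containment by a genuinely different mechanism than the paper. The paper, after the same reduction via \cref{lem:yirscf} and the same integrability check, changes variables to $\int_0^t w(s)u(t-s)\,\dd s$ and invokes a vector-valued mean value theorem for Bochner integrals (Diestel--Uhl, Cor.\,II.8, with the ``for all'' statements weakened to ``for almost all''), concluding that $d^{(1)}(t)-v(t)\in t\,\overline{\conv f_t([0,t])}^W$ for $f_t(s)=w(s)u(t-s)$; since $u$ is scalar-valued, $f_t([0,t])\subset\spn w([0,T])$, and the closed convex hull sits inside the closed span. You instead prove directly that closed subspaces are stable under Bochner integration, via Hahn--Banach: any functional annihilating $M=\overline{\spn w([0,T])}^W$ kills the integrand a.e.\ (using that a strongly measurable function takes values in its essential range a.e., and that reflection preserves null sets), hence kills the integral. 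Both routes rest on the same measure-theoretic core --- a.e.\ membership of $w$ in its essential range --- but yours is more self-contained and proves exactly the needed span containment without citing an external mean value theorem or modifying its proof, while the paper's citation buys a slightly finer localization (the scaled closed convex hull rather than just the closed span), which is never used. Your treatment of the discrete claim via \cref{prp:conv_theta} coincides with the paper's.
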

\begin{proof}
The trajectories $y_1$, $\bar{y}$ and $z$ are mild solutions and continuous accordingly. Thus, the 
pointwise evaluation makes sense. Employing \cref{lem:yirscf}, we observe that
\[ d^{(1)}(t) - v(t) = \int_0^t w(t - s)u(s) \dd s
= \int_0^t w(s)u(t - s) \dd s
\]
with $u(t - s) \in \mathbb{R}$.
Because $w \in L^2(0,T;W)$ and $u \in L^2(0,T)$, it follows that the integrand
in the convolution formula above is in $L^1(0,T;W)$ for all $t \in [0,T]$.
Thus, a vector-valued version of the mean value theorem for Bochner integrals (see  \cite[Cor.\,II.8]{diestel1977vector})
(after replacing all \emph{for all} statements in
its proof by \emph{for almost all}) yields 
\[ d^{(1)}(t) - v(t) \in t\, \overline{\conv f_t([0,t])}^W, \]
where $f_t \in L^1(0,t; W)$ with $f_t(s) \coloneqq w(s)u(t - s)$ for a.a.\ $s \in [0,t]$ and all $t \in [0,T]$. Because $u$ is $\R$-valued,
it follows that $f_t([0,t]) \subset \spn w([0,T])$,
which closes the argument. The $\spn$ for the discrete trajectories follows
from \cref{prp:conv_theta}.
\end{proof}

To approximate $d^{(1)}$, we consider a POD approximation of $w$
in $L^2(0,T; W)$ of rank $n \in \N$. That is, we seek to bound
the approximation error
\begin{gather}\label{eq:cont_pod_problem}
\begin{aligned}
\min_{\psi^1,\ldots,\psi^n}\ &\frac{1}{2}\int_0^T
   \left\|w(t) - \sum_{i=1}^n (\psi^i, w(t))_W \psi^i\right\|^2_W\,\dd t\\
\text{ s.t.\ } &(\psi^i, \psi^j)_W = \delta_{ij} \text{ for all } 1 \le i \le j \le n. 
\end{aligned}
\end{gather}
To this end, we adapt the \ac{POD} approximation from \cite[Sec.\,3]{kunisch2002galerkin}.
Let the operator $\mathcal{Y} : L^2(0,T;\R) \to W$ be defined as
$\mathcal{Y}\varphi \coloneqq \int_0^T \varphi(t)w(t)\dd t$.
Its adjoint $\mathcal{Y}^* : W \to L^2(0,T;\R)$ is
$(\mathcal{Y}^*f)(t) = (f, w(t))_W$
for a.a.\ $t \in (0,T)$. Defining
$\mathcal{R} \coloneqq \mathcal{Y}\mathcal{Y}^*$ yields
%
\[ \mathcal{R}z = \int_0^T (z, w(t))_W w(t)\,\dd t. \]
Then we can characterize the POD approximation by means of
the spectrum of $\mathcal{R}$.
\begin{proposition}\label{prp:cont_pod_w}
Let the assumptions of \cref{thm:first_newton_step_spaces} hold.
Then there exists an orthonormal basis $(\psi^i)_{i\in\N}$ of $W$
and $(\lambda_i)_{i\in\N} \subset [0,\infty)$ such that
$\mathcal{R}\psi^i = \lambda_i\psi^i$ for all $i \in \N$ and
$\lambda_i \to 0$.
Moreover, it follows that
\[ \int_0^T \|w(t)\|_W^2\dd t = \sum_{i=1}^\infty \lambda_i \]
and for all $n \in \N$ it holds that
\[ \int_0^T \left\|w(t) - \sum_{i=1}^n (\psi^i, w(t))_W \psi^i\right\|^2_W \dd t 
   = \sum_{i=n+1}^\infty \lambda_i. \]
\end{proposition}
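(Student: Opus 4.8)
The statement is the classical Karhunen--Lo\`eve / POD spectral identity, here for a $W$-valued snapshot trajectory $w\in L^2(0,T;W)$, so the plan is to identify $\mathcal{R}$ as a compact, self-adjoint, nonnegative operator on $W$ and then invoke its spectral decomposition. Self-adjointness and nonnegativity are immediate from the factorization $\mathcal{R}=\mathcal{Y}\mathcal{Y}^*$: for $z_1,z_2\in W$ one has $(\mathcal{R}z_1,z_2)_W=(\mathcal{Y}^*z_1,\mathcal{Y}^*z_2)_{L^2(0,T)}=(z_1,\mathcal{R}z_2)_W$, and $(\mathcal{R}z,z)_W=\|\mathcal{Y}^*z\|_{L^2(0,T)}^2\ge 0$. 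The crux of the proof is therefore compactness.

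To obtain compactness I would show that $\mathcal{Y}^*$ (equivalently $\mathcal{Y}$) is Hilbert--Schmidt. Fixing any orthonormal basis $(\phi_j)_{j\in\N}$ of $W$ and using $(\mathcal{Y}^*\phi_j)(t)=(\phi_j,w(t))_W$, Tonelli's theorem together with Parseval's identity in $W$ gives
\[
  \sum_{j\in\N}\|\mathcal{Y}^*\phi_j\|_{L^2(0,T)}^2
  =\int_0^T\sum_{j\in\N}|(\phi_j,w(t))_W|^2\,\dd t
  =\int_0^T\|w(t)\|_W^2\,\dd t<\infty,
\]
where finiteness is exactly the hypothesis $w\in L^2(0,T;W)$. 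Hence $\mathcal{Y}^*$ is Hilbert--Schmidt, so $\mathcal{Y}$ is as well, and $\mathcal{R}=\mathcal{Y}\mathcal{Y}^*$ is a product of Hilbert--Schmidt operators, hence trace class and in particular compact. The spectral theorem for compact, self-adjoint, nonnegative operators on the separable Hilbert space $W$ then yields a countable orthonormal system of eigenvectors with nonnegative eigenvalues accumulating only at $0$; augmenting these with an orthonormal basis of $\ker\mathcal{R}$ (eigenvalue $0$) produces the orthonormal basis $(\psi^i)_{i\in\N}$ of $W$ with $\mathcal{R}\psi^i=\lambda_i\psi^i$, $\lambda_i\ge 0$, and $\lambda_i\to 0$ asserted in the first claim.

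For the two energy identities I would use the key relation $(\mathcal{R}\psi^i,\psi^i)_W=\|\mathcal{Y}^*\psi^i\|_{L^2(0,T)}^2=\int_0^T|(\psi^i,w(t))_W|^2\,\dd t=\lambda_i$. The trace formula then follows by expanding $w(t)$ in the basis, $\|w(t)\|_W^2=\sum_i|(\psi^i,w(t))_W|^2$ by Parseval, and interchanging sum and integral by Tonelli, giving $\int_0^T\|w(t)\|_W^2\,\dd t=\sum_i\lambda_i$. For the error formula, fix $t$ and observe that the truncation leaves the tail $w(t)-\sum_{i=1}^n(\psi^i,w(t))_W\psi^i=\sum_{i>n}(\psi^i,w(t))_W\psi^i$, whose squared $W$-norm is $\sum_{i>n}|(\psi^i,w(t))_W|^2$; integrating over $(0,T)$ and again swapping sum and integral by Tonelli yields $\sum_{i>n}\lambda_i$. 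The only genuine obstacle is the compactness step above; once $\mathcal{R}$ is known to be compact and nonnegative, the remaining arguments are the standard spectral decomposition together with the nonnegativity of all summands, which makes every Tonelli interchange unconditional. A minor technical point worth flagging is that obtaining an orthonormal basis of all of $W$ uses separability of $W$, which holds in the guiding example since $W=H^1_0(\Omega)$.
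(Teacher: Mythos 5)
Your proof is correct, but it reaches compactness by a genuinely different route than the paper. The paper establishes compactness of $\mathcal{Y}^*$ by a translation-continuity argument: for a bounded set $B\subset W$ it bounds $\sup_{f\in B}|(\mathcal{Y}^*f)(t)-(\mathcal{Y}^*f)(t+h)|$ by $C\|w(t)-w(t+h)\|_W$, uses continuity of translation in $L^2$ to get equicontinuity, and invokes the Riesz--Kolmogorov compactness theorem; it then cites the analysis of Section~3 of Kunisch--Volkwein (spectral theorem applied to $\mathcal{R}$) for the eigenbasis and the two energy identities. You instead show that $\mathcal{Y}^*$ lies in the Hilbert--Schmidt class by the direct computation $\sum_j\|\mathcal{Y}^*\phi_j\|_{L^2(0,T)}^2=\int_0^T\|w(t)\|_W^2\,\mathrm{d}t<\infty$ (Parseval plus Tonelli), so that $\mathcal{R}=\mathcal{Y}\mathcal{Y}^*$ is even trace class, and you then derive the spectral decomposition and both identities from scratch via $\lambda_i=\|\mathcal{Y}^*\psi^i\|_{L^2(0,T)}^2=\int_0^T|(\psi^i,w(t))_W|^2\,\mathrm{d}t$. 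Your route is more elementary and self-contained: it avoids the Riesz--Kolmogorov theorem entirely, does not defer the identities to the literature, and the trace-class conclusion makes the summability $\sum_i\lambda_i<\infty$ and the trace formula immediate rather than inherited from the cited framework. What the paper's route buys is direct compatibility with the Kunisch--Volkwein setup it quotes, so that all spectral facts can be cited wholesale; also, its equicontinuity argument is the natural one when one only wants compactness, not the stronger Hilbert--Schmidt property. Your closing caveat about separability of $W$ is well taken and is implicit in the paper's claim of a basis indexed by $\mathbb{N}$; it holds in the guiding example and is harmless here.
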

\begin{proof}
This follows from the analysis in Section 3 of \cite{kunisch2002galerkin},
in particular the Hilbert--Schmidt theorem applied to $\mathcal{R}$,
with the choice $X = W$ if we are able to show that
the mapping $\mathcal{Y}^*$ is compact (see the 2nd paragraph on page 498
in \cite{kunisch2002galerkin}) for $w \in L^2(0,T;W)$.
Let $f \in B$ and $B \subset W$ be bounded, that is, $C \coloneqq \sup_{f \in B}\|f\|_{W} < \infty$. We obtain
\begin{align*}
\sup_{f \in B}\left|(\mathcal{Y}^*f)(t) - (\mathcal{Y}^*f)(t + h)\right|
&= \sup_{f \in B} |(f, w(t) - w(t + h))_W|\\
&\le C \|w(t) - w(t + h)\|_W
\end{align*} 
for a.a.\ $t$, $t + h \in (0,T)$ using the Cauchy--Schwarz inequality.
Because $\int_0^{T -h} \|w(t) - w(t + h)\|_W^2\dd t \to 0$
for $h \to 0$, it holds that
\begin{multline*}
\sup_{f \in B} \int_0^{T -h} \left|(\mathcal{Y}^*f)(t) - (\mathcal{Y}^*f)(t + h)\right|^2\dd t
\le C^2 \int_0^{T -h} \|w(t) - w(t + h)\|_W^2\dd t \to 0
\end{multline*}
for $h \to 0$, which shows equicontinuity of $\mathcal{Y}^*$ with respect
to $L^2(0,T;\R)$. We can hence apply the Riesz--Kolmogorov
compactness theorem \cite{riesz1933sur,simon1986compact}
to deduce that $\mathcal{Y}^*(B)$ is a compact set,
which implies that $\mathcal{Y}^*$ is a compact operator.
\end{proof}

To use this approximation in the remainder, we introduce the
following notation. Let $\psi^1,\ldots,\psi^n$ be an orthonormal
subset of $W$. Then for $f \in L^2(0,T; W)$, we define the
(pointwise a.e.) orthogonal projection
\[ \PPi_\psi f(t) \coloneqq \sum_{i=1}^n (\psi^i, f(t))_W \psi^i. \]

The argument above does not depend on the function $w$,
and the function $v$ can be approximated analogously. Therefore, we
consider a joint reduced basis for $w$ and $v$ in the remainder.
We consider the projection $\PPi_\psi d^{(1)}$ of $d^{(1)}$ on the
reduced basis
\[ \PPi_\psi d^{(1)} (t)=
\sum_{i=1}^n\left(\psi_i,v(t) + \int_0^t w(s)u(t - s)\,\dd s\right)_W\psi_i.
\]
We denote the corresponding approximation of the first Newton step as
\[ y^{(1)}_\psi \coloneqq \bar{y} + \PPi_\psi d^{(1)} \]
and denote the projection error, which can be driven to zero
by \cref{prp:cont_pod_w}, as
\[ e_1 \coloneqq \|d^{(1)} - \PPi_\psi d^{(1)}\|_{L^2(0,T;W)}. \]
We summarize the resulting approximation quality $y^{(1)}_\psi$ below.

\begin{corollary}
Let the assumptions of \cref{thm:first_newton_step_spaces} hold.
Let $y \in Y$ solve \eqref{eq:nlse}. Then
\[ \|y - y^{(1)}_\psi\|_{L^2(0,T;W)}
   \le \|y - y^{(1)}\|_{L^2(0,T;W)} + e_1.
\]
Let $(\psi^i)_{i \in \N}$ and $(\lambda_i)_{i \in \N}$ be as in
\cref{prp:cont_pod_w}. Then
\[ \|y - y^{(1)}_\psi\|_{L^2(0,T;W)}
   \le \|y - y^{(1)}\|_{L^2(0,T;W)} + \sum_{i=n+1}^{\infty}\lambda_i.
\]
\end{corollary}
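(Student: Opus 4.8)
The plan is to split $y - y^{(1)}_\psi$ into a \emph{modeling} part $y - y^{(1)}$ and a \emph{projection} part $y^{(1)} - y^{(1)}_\psi$ by the triangle inequality in $L^2(0,T;W)$, and then to control the projection part by the eigenvalue decay of \cref{prp:cont_pod_w}. For the first inequality the only observation needed is that the linearization point $\bar y$ is common to $y^{(1)} = \bar y + d^{(1)}$ and to $y^{(1)}_\psi = \bar y + \PPi_\psi d^{(1)}$, so it cancels and $y^{(1)} - y^{(1)}_\psi = d^{(1)} - \PPi_\psi d^{(1)}$. By the definition of $e_1$ its $L^2(0,T;W)$ norm equals $e_1$ exactly, which gives the first inequality with no further work.

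For the second inequality it remains to bound $e_1$ by $\sum_{i=n+1}^\infty \lambda_i$. Here I would invoke the convolution representation $d^{(1)}(t) = v(t) + \int_0^t w(s) u(t-s)\,\dd s$ from \cref{lem:yirscf} together with the linearity of the orthogonal projection $\PPi_\psi$. Since $\PPi_\psi$ commutes with the Bochner integral, the projection error decomposes as $d^{(1)} - \PPi_\psi d^{(1)} = (v - \PPi_\psi v) + \big((I - \PPi_\psi)w\big) * u$, reducing the task to estimating the projection error of $v$ and the convolution of the projection error of $w$ with the scalar control $u$. Young's inequality for Bochner-space convolutions bounds the latter by $\|u\|_{L^1(0,T)}\,\|w - \PPi_\psi w\|_{L^2(0,T;W)}$, and \cref{prp:cont_pod_w}, applied to $w$ and analogously to $v$ for the joint reduced basis, identifies $\|w - \PPi_\psi w\|_{L^2(0,T;W)}^2 = \sum_{i=n+1}^\infty \lambda_i$. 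This controls $e_1$ by the eigenvalue tail and, crucially, shows $e_1 \to 0$ as $n \to \infty$.

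The step that demands the most care is this final bookkeeping. \cref{prp:cont_pod_w} delivers the \emph{squared} $L^2(0,T;W)$ error as the tail $\sum_{i=n+1}^\infty \lambda_i$, whereas $e_1$ is a norm, and the Young estimate additionally carries the control-dependent factor $\|u\|_{L^1(0,T)}$; keeping track of this square root and constant, and verifying that a single joint reduced basis for $v$ and $w$ simultaneously controls both summands of the splitting, is the only nontrivial part. Once these constants are made explicit, the eigenvalue decay guaranteed by \cref{prp:cont_pod_w} closes the argument, since both $\|v - \PPi_\psi v\|_{L^2(0,T;W)}$ and $\|w - \PPi_\psi w\|_{L^2(0,T;W)}$ vanish as $n\to\infty$.
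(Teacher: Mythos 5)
Your proof of the first inequality is exactly the paper's: the triangle inequality in $L^2(0,T;W)$, the cancellation of $\bar y$ in $y^{(1)} - y^{(1)}_\psi = d^{(1)} - \PPi_\psi d^{(1)}$, and the definition of $e_1$. No issues there. For the second inequality, however, your route diverges from the paper's and does not prove the claim as stated. The paper's (terse) proof applies \cref{prp:cont_pod_w} \emph{to $d^{(1)}$ in place of $w$}: the $(\psi^i)$ and $(\lambda_i)$ appearing in the second inequality are the POD basis and eigenvalues of the trajectory $d^{(1)}$ itself, and the role of \cref{lem:yirscf} is only to supply the representation/regularity of $d^{(1)}$ needed so that the compactness argument of \cref{prp:cont_pod_w} applies. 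With that reading, the squared projection error of $d^{(1)}$ onto its own first $n$ POD modes \emph{equals} the tail $\sum_{i=n+1}^\infty \lambda_i$, so the second bound follows from the first with no constants at all. You instead keep $(\lambda_i)$ attached to $w$ (and $v$) and transfer the tail bound through the convolution formula via Young's inequality. This necessarily produces
\[
e_1 \le \|v - \PPi_\psi v\|_{L^2(0,T;W)} + \|u\|_{L^1(0,T)}\,\|w - \PPi_\psi w\|_{L^2(0,T;W)},
\]
and the factor $\|u\|_{L^1(0,T)}$ cannot be removed or absorbed: $u$ is an arbitrary element of $U$, so no constant-free bound of the form $e_1 \le \sum_{i=n+1}^\infty\lambda_i$, as the corollary asserts, can come out of this estimate. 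The convolution-plus-Young argument proves a genuinely different (control-dependent) inequality, not the stated one.

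Two further points. First, with a \emph{joint} reduced basis for $v$ and $w$, the exact-tail identity $\|w - \PPi_\psi w\|^2_{L^2(0,T;W)} = \sum_{i>n}\lambda_i$ that you invoke is false — \cref{prp:cont_pod_w} gives equality only when projecting a trajectory onto its own POD basis; for a joint basis one retains only an inequality against the joint eigenvalue tail. You flag this as the part requiring care but do not resolve it, and it is harmless only because the inequality direction you need survives. Second, on the credit side: you correctly observe that \cref{prp:cont_pod_w} controls the \emph{squared} error, so that $e_1$ is the square root of an eigenvalue tail. The paper's statement and proof silently drop this square root (here, and again in the main theorem of the following subsection), a sloppiness your bookkeeping exposes. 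Still, the essential idea you are missing is the reinterpretation of $(\psi^i),(\lambda_i)$ as the POD data of $d^{(1)}$ itself, which is what makes the paper's constant-free bound (modulo that square root) a one-line consequence of the first inequality.
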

\begin{proof}
The claim follows from $y^{(1)} = \bar{y} + d^{(1)}$,
\cref{lem:yirscf}, and \cref{prp:cont_pod_w}
applied to $d^{(1)}$ instead of $w$.
\end{proof}

\begin{remark}
This means that the error of approximating $y$ with a projection of the
first Newton step to the reduced space is bounded by the sum of the
error of the Newton step and the POD approximation error of $v$ and the impulse response $w$, which are both independent of the control input.
\end{remark}

\subsection{Subspace characterization of the second simplified Newton step}\label{sec:second_newton_step}

We consider again the fixed linearization point $\bar y \in Y$,
where $\bar y$ is constant in time, that is, $\bar y(t)=\bar y_0$
for some $\bar y_0\in V$.
We recall the second simplified Newton step
\begin{align*}
	E_y(\bar{y},\bar{u})d^{(2)} &= -E(y^{(1)}, u), & & y^{(2)} \coloneqq y^{(1)} + d^{(2)}. 
\end{align*}
Applying this to the nonlinear operator $E$ defined in \eqref{eq:Edefgen}, we obtain
\begin{align} \label{eq:newt2}
 \binom{(\partial_t - A - C) d^{(2)}}{d^{(2)}(0)}=
\binom{N(\bar{y}) - C d^{(1)} - N(y^{(1)})}{0},
\end{align}
which follows after inserting that the first Newton step $d^{(1)}$
solves \eqref{eq:newt1} and the fact $\partial_t \bar{y} = 0$
into the definition of $E$.
\C{
which follows from
\begin{align*}
E(y^{(1)}, u) &= \partial_t \bar{y} - A \bar{y} + \partial_t d^{(1)} - A d^{(1)} + N(y^{(1)}) - Fu \\
		&= \partial_t \bar{y} - A\bar{y} + \partial_t d^{(1)} - A d^{(1)} - C d^{(1)} + C d^{(1)} + N(y^{(1)}) - Fu \\
		&= \partial_t \bar{y} - A\bar{y}
		+ A\bar{y} - N(\bar{y}) + Fu + C d^{(1)} + N(y^{(1)}) - Fu \\		
		&= - (N(\bar{y}) - C d^{(1)} - N(y^{(1)})
\end{align*}
where we have inserted the first Newton step \eqref{eq:newt1} in the
right hand side of the third equation and used $\partial_t \bar{y} = 0$.}

\begin{lemma}\label{lem:second_newton_step_formula}
	Let \cref{ass:setting} hold. Then the solution
	$d^{(2)} \in Y$ of \eqref{eq:newt2} is given by
	\begin{gather}\label{eq:d2_representation}
	d^{(2)}(t) = \int_0^t T(t - s)\left(N(\bar{y}) - C d^{(1)} - N(y^{(1)})\right)\,\dd s.
	\end{gather}
\end{lemma}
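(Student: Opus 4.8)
The plan is to recognize that \eqref{eq:newt2} is simply an instance of the linearized equation $E_y(\bar y,\bar u)v=(z,v_0)$ already treated under \cref{ass:setting}, with source term $z \coloneqq N(\bar{y}) - C d^{(1)} - N(y^{(1)})$ and initial datum $v_0 = 0$, and then to read off the mild-solution representation \eqref{eq:mild}.

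First I would confirm that the right-hand side of \eqref{eq:newt2} lies in $Z$, so that the abstract theory applies. Since $y^{(1)} = \bar{y} + d^{(1)} \in Y$ and $E : Y\times U \to Z$, the quantity $-E(y^{(1)},u)$ is automatically an element of $Z$; the identity $-E(y^{(1)},u) = (N(\bar{y}) - C d^{(1)} - N(y^{(1)}),0)$, obtained by inserting that $d^{(1)}$ solves \eqref{eq:newt1} and using $\partial_t \bar{y} = 0$, has already been established immediately above the lemma. Hence $(z,0)\in Z$, and in particular $z\in L^2(0,T;H)$.

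Next I would invoke \cref{ass:setting} directly: because $E_y(\bar y,\bar u)\in\mathcal{L}(Y,Z)$ is invertible, \eqref{eq:newt2} has a unique solution $d^{(2)}\in Y$, which by semigroup theory admits the mild-solution representation \eqref{eq:mild} with the semigroup $(T(t))_{t\ge 0}$ generated by $B = A - N_y(\bar y) = A + C$. Specializing \eqref{eq:mild} to the present source $z$ and initial value $v_0 = d^{(2)}(0) = 0$, the leading term $T(t)v_0$ vanishes and we obtain
\[
d^{(2)}(t) = \int_0^t T(t-s)\left(N(\bar{y}) - C d^{(1)} - N(y^{(1)})\right)\,\dd s,
\]
which is exactly the asserted formula \eqref{eq:d2_representation}.

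I expect no serious obstacle: once the source term is identified and its membership in $L^2(0,T;H)$ confirmed, the statement is a transcription of \eqref{eq:mild}. The only point requiring a little care is the regularity of $z$, namely that $N(\bar{y}) - N(y^{(1)}) \in L^2(0,T;H)$ and $C d^{(1)} \in L^2(0,T;H)$; this follows because $N$ maps $Y$ into $L^2(0,T;H)$ (as verified for the guiding example in \cref{prp:sl_derivative}), because $\bar y, y^{(1)} \in Y$, and because $C \in \mathcal{L}(H,H)$ acts on $d^{(1)} \in Y \hookrightarrow C([0,T],H)$.
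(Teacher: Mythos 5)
Your proposal is correct and follows essentially the same route as the paper: invoke \cref{ass:setting} (invertibility of $E_y(\bar y,\bar u)$ and the semigroup property of $B = A + C$) to get a unique solution $d^{(2)}\in Y$, then read off the variation-of-constants representation \eqref{eq:mild} with zero initial datum. Your additional verification that the right-hand side $\bigl(N(\bar{y}) - C d^{(1)} - N(y^{(1)}),\,0\bigr)$ lies in $Z$ is a harmless elaboration of what the paper leaves implicit.
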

\begin{proof}
\Cref{ass:setting} implies that \eqref{eq:newt2} has
a unique solution $d^{(2)} \in Y$ that is a mild solution
and can be represented with the variation of constants formula.
\end{proof}

Next, we assume that an orthonormal subset $\psi^1,\ldots,\psi^M \subset W$
is given, and we aim to characterize the solution $d^{(2)}[\psi]$
of \eqref{eq:newt2} for the case that $d^{(1)}$ and
$y^{(1)}$ have been replaced by the approximations
$y^{(1)}_\psi$ and $\PPi_\psi d^{(1)}$ obtained
in \cref{sec:approximation_newt1}.

We restrict our analysis to the case that $N$ is the superposition operator
of a polynomial with degree $p \in \N$. The set of monomials
$\{1,x,\ldots,x^p\}$ constitutes a basis of the polynomials, which implies
\[ N(y^{(1)}_\psi)(t)
   \in \bigcup_{i=1}^p
   \spn\left\{ \Pi_{j=1}^i b_j\,\big|\,
    b_1,\ldots,b_i \in \{\bar{y},\psi^1,\ldots,\psi^n\}\right\}   
    \eqqcolon \mathcal{C},
\]
where we further require that $\mathcal{C} \subset H$ and deduce
that there are orthonormal vectors $c_1,\ldots,c_m$---e.g.\ obtained
by Gram--Schmidt orthonormalization---such that we may write
$\mathcal{C} = \spn\{ c_1,\ldots,c_m \}$.
Note that $\mathcal{C} \subset H$ is satisfied for our guiding
example because of the continuous embedding
$W \hookrightarrow L^6(\Omega)$. In particular we obtain
\[ N(y^{(1)}_\psi)(t) = \sum_{j=1}^m \left(c^j, N(y^{(1)}_\psi)(t)\right)_H
c^j \]
for all $t \in [0,T]$.

We consider \eqref{eq:newt2} with the approximations $\PPi_\psi d^{(1)}$ and
$y^{(1)}_\psi$ substituted for $d^{(1)}$ and $y^{(1)}$.
Then the representation \eqref{eq:d2_representation} and the subspaces
$\spn\{\psi^1,\ldots,\psi^n\}$ and $\mathcal{C}$ give rise to the 
initial value problems
\begin{align} 
\left\{ \binom{(\partial_t - A - C) \beta^i}{\beta^i(0)}=
\binom{0}{-C \psi^i}\right. \text{ for } i \in \{1,\ldots,n\},\label{eq:newt2_b}\\
\left\{ \binom{(\partial_t - A - C) \gamma^j}{\gamma^j(0)}=
\binom{0}{-c_j}\right. \text{ for } j \in \{1,\ldots,m\}. \label{eq:newt2_c}
\end{align}
Similar to the first Newton step, we can now characterize the subspace
that contains the orbit of the second simplified Newton step
if $d^{(1)}$ has already been reduced by means of a POD approximation.

\begin{theorem}\label{thm:second_newton_step_spaces}
Let $\bar{y}$ be a given linearization point
of $E$.
Let $N$ be the superposition operator of a polynomial of
such that $N \in C(W, H)$.
Let $\{\psi^1,\ldots,\psi^n\}$, $\mathcal{C}$,
\eqref{eq:newt2_b}, and \eqref{eq:newt2_c} be as introduced above.
Let $d^{(2)}[\psi]$ solve \eqref{eq:newt2}
with the approximations $\PPi_\psi d^{(1)}$ and
$y^{(1)}_\psi$ substituted for $d^{(1)}$ and $y^{(1)}$.
Then for all $t \in [0,T]$ we have
\begin{align*}
d^{(2)}[\psi](t) = r(t) + b(t) + c(t)
\end{align*}
with
\begin{align*}
r(t) &= \int_{0}^t T(t - s)N(\bar{y})\,\dd s,\\
b(t) &\in \overline{\spn\left\{ \bigcup_{i=1}^n \beta^i([0,T]) \right\}}^W,
\text{ and}\\
c(t) &\in \overline{\spn\left\{ \bigcup_{j=1}^m \gamma^j([0,T])\right\}}^W,
\end{align*} 
where $\beta^i([0,T])$ and $\gamma^j([0,T])$ denote the essential ranges
of $\beta^i$ and $\gamma^j$.
\end{theorem}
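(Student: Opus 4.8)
The plan is to feed the substituted equation for $d^{(2)}[\psi]$ into the variation of constants formula and then split the resulting Duhamel integral into the three advertised pieces. Since the right-hand side of \eqref{eq:newt2} becomes $N(\bar y)-C\PPi_\psi d^{(1)}-N(y^{(1)}_\psi)$ after the substitutions, the same mild-solution representation used in \cref{lem:second_newton_step_formula} gives
\begin{align*}
d^{(2)}[\psi](t) = \int_0^t T(t-s)N(\bar y)\,\dd s
 &- \int_0^t T(t-s)\,C\PPi_\psi d^{(1)}(s)\,\dd s \\
 &- \int_0^t T(t-s)\,N(y^{(1)}_\psi)(s)\,\dd s.
\end{align*}
The first summand is by definition $r(t)$, so only the remaining two integrals need to be identified with $b(t)$ and $c(t)$.

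For the second integral I would expand the projection into scalar coefficients, $\PPi_\psi d^{(1)}(s)=\sum_{i=1}^n \alpha_i(s)\psi^i$ with $\alpha_i(s)=(\psi^i,d^{(1)}(s))_W\in\R$. Because $\beta^i$ in \eqref{eq:newt2_b} is the mild solution with vanishing forcing and initial value $-C\psi^i$, it satisfies $\beta^i(\tau)=-T(\tau)C\psi^i$, so that $T(t-s)C\psi^i=-\beta^i(t-s)$. Interchanging the finite sum with the integral then yields $b(t)=\sum_{i=1}^n\int_0^t \beta^i(t-s)\,\alpha_i(s)\,\dd s$, which after the substitution $s\mapsto t-s$ is precisely a convolution of the $W$-valued function $\beta^i$ against the scalar function $\alpha_i$—the exact structure treated in \cref{thm:first_newton_step_spaces}. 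Reusing that argument (Bochner integrability of the integrand followed by the vector-valued mean value theorem, together with the observation that a real scalar factor keeps each value inside the span) places each term in $\overline{\spn\beta^i([0,T])}^W$ and hence $b(t)\in\overline{\spn\{\bigcup_{i=1}^n\beta^i([0,T])\}}^W$. The third integral is handled identically: using the expansion $N(y^{(1)}_\psi)(s)=\sum_{j=1}^m \mu_j(s)c_j$ with $\mu_j(s)=(c^j,N(y^{(1)}_\psi)(s))_H\in\R$ established just before the theorem, and the identity $T(t-s)c_j=-\gamma^j(t-s)$ from \eqref{eq:newt2_c}, I obtain $c(t)=\sum_{j=1}^m\int_0^t \gamma^j(t-s)\,\mu_j(s)\,\dd s\in\overline{\spn\{\bigcup_{j=1}^m\gamma^j([0,T])\}}^W$. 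Adding $r$, $b$, and $c$ recovers $d^{(2)}[\psi]$.

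The main obstacle I expect is the integrability bookkeeping that licenses the vector-valued mean value theorem in each convolution, i.e.\ checking that $s\mapsto\beta^i(t-s)\alpha_i(s)$ and $s\mapsto\gamma^j(t-s)\mu_j(s)$ belong to $L^1(0,t;W)$. This forces the impulse-type solutions $\beta^i,\gamma^j$ to take values in $W$—so that the $W$-closures in the statement are meaningful—and to be square integrable there, alongside $\alpha_i\in L^2(0,T)$ (immediate from $d^{(1)}\in L^2(0,T;W)$) and $\mu_j\in L^2(0,T)$ (from $N(y^{(1)}_\psi)\in L^2(0,T;H)$, using that $N$ is polynomial and $y^{(1)}_\psi$ is a finite $W$-combination together with the embedding $W\hookrightarrow L^{2p}(\Omega)$). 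The regularity $\beta^i,\gamma^j\in L^2(0,T;W)$ rests on the smoothing of $(T(t))_{t\ge0}$ acting on the initial data $-C\psi^i$ and $-c_j$, which is exactly where \cref{ass:setting} and the parabolic structure enter.
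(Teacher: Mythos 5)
Your proposal is correct and follows essentially the same route as the paper's own proof: apply the mild-solution representation of \cref{lem:second_newton_step_formula} to the substituted right-hand side $N(\bar y)-C\PPi_\psi d^{(1)}-N(y^{(1)}_\psi)$, expand $\PPi_\psi d^{(1)}$ and $N(y^{(1)}_\psi)$ in the finite bases $\{\psi^i\}$ and $\{c_j\}$ with scalar coefficient functions so that $b$ and $c$ become convolutions of $\beta^i$ and $\gamma^j$ against scalars, and then replay the Bochner mean-value-theorem argument of \cref{thm:first_newton_step_spaces}. Your additional integrability bookkeeping (that $\beta^i,\gamma^j\in L^2(0,T;W)$ and the scalar coefficients are in $L^2(0,T)$) is exactly what the paper asserts, only more explicitly justified.
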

\begin{proof}
With the analysis above we have that \eqref{eq:newt2_b}
and \eqref{eq:newt2_c} admit unique solutions $\beta_i$,
$\gamma_j \in L^2(0,T;W)$ for $i \in\{1,\ldots,n\}$, and
$j\in\{1,\ldots,m\}$. \Cref{lem:second_newton_step_formula}
implies that $d^{(2)}[\psi](t) = r(t) + b(t) + c(t)$ holds with $r$ as claimed:
\[ b(t) = \int_0^{t} \sum_{i=1}^n\beta^i(s)u^i(t - s)\dd s,
   \quad\text{and}\quad
   c(t) = \int_0^{t} \sum_{j=1}^m\gamma^j(s)v^j(t - s)\dd s.
\]
Repeating the argument from the proof of \Cref{thm:first_newton_step_spaces},
we obtain
\[ b(t) \in t\, \overline{\conv\left\{
\bigcup_{i=1}^n f_t^i([0,t])\}
\right\}}^W\quad\text{and}\quad
c(t) \in t\, \overline{\conv\left\{
\bigcup_{j=1}^m g_t^j([0,t])\right\}}^W,
\]
where $f_t^i(s) \coloneqq \beta^i(s)u^i(t - s)$
and $g_t^j(s) \coloneqq \gamma^j(s)v^j(t - s)$ for a.a.\ $s \in [0,t]$
and all $t \in [0,T]$.
\end{proof}

\begin{remark}\label{rem:pod_reduction_of_C}
$\mathcal{C}$ is generated by the sets of $k$-combinations
(for $k = 1,\ldots,p+1$) of (basis) vectors $\bar{y},\psi^1,\ldots,\psi^n$,
which grows excessively with $n$ and $p$. Therefore it may be advisable
to reduce the basis $c^1,\ldots,c^m$ with POD as well.
\end{remark}

\subsection{Subspace approximation of the second simplified Newton step}\label{sec:approximation_newt2}

We consider $\psi^1,\ldots,\psi^n$ and $e_1$ as
in \cref{sec:approximation_newt1}. The error estimates below
depend on the approximation error of the nonlinear operator
$N$ at $y^{(1)}$, which we define as
\[ \ell(e_1,y^{(1)}) \coloneqq
   \|N(y^{(1)}) - N(y^{(1)}_\psi)\|_{L^2(0,T;H)}. \]
We briefly show how an estimate on $\ell(e_1,y^{(1)})$
can be derived for our guiding example.

\begin{example}\label{exa:ell_e1_y1}
We consider the \ac{POD} approximation of $d^{(1)}$ analyzed
in \cref{sec:approximation_newt1} and $N$ defined
as $N(\eta) \coloneqq \eta^3$.
For brevity of the presentation, we assume that
$\bar{y} \in \spn\{\psi^1,\ldots,\psi^n\}$, and we 
define $y \coloneqq y^{(1)}$ and $y_\psi \coloneqq y^{(1)}_\psi$.

For a.a.\ $t\in [0,T]$, we obtain
\begin{align*}
\|y(t)^3 - y_\psi(t)^3\|_{L^2} &\le
\|y(t)^2 + y(t)y_\psi(t) + y_\psi(t)^2\|_{L^3}
\|y(t) - y_\psi(t)\|_{L^6}\\
&\le 3\|y(t)\|_{H^1_0}^2\|y(t) - y_\psi(t)\|_{H^1_0},
\end{align*}
where H\"{o}lder's inequality yields the first inequality.
The second inequality follows from the fact that
$y_\psi(t) = \PPi_\psi y(t)$ and thus
$\|y_\psi(t)\|_{H^1_0} \le \|y(t)\|_{H^1_0}$
and the embedding $H^1_0(\Omega)\hookrightarrow L^6(\Omega)$.
We integrate over both sides and use H\"{o}lder's inequality
to obtain
\begin{align*}
\int_0^T \|y(t)^3 - y_\psi(t)^3\|_{L^2}^2\,\dd t
&\le \int_0^T  3\|y(t)\|_{H^1_0}^4\|y(t) - y_\psi(t)\|_{H^1_0}^2\,\dd t\\
&\le 3\|y\|_{C([0,T];H^1_0)}^4 \int_0^T \|y(t) - y_\psi(t)\|_{H^1_0}^2\,\dd t\\
&\le 3\|y\|_{C([0,T];H^1_0)}^4 e_1^2,
\end{align*}
which yields the estimate
$\ell(e_1,y^{(1)}) \le \sqrt{3}\|y^{(1)}\|_{C([0,T];H^1_0)}^2 e_1$.
If the input $u$ is, for example, bound constrained or $L^2$ regularized
in an optimal control setting, then this implies that
$\ell(e_1,y^{(1)})$ is uniformly bounded by a multiple of $e_1$.
\end{example}

To derive an approximation of the second Newton step, we again restrict
ourselves to the case that $N$ is the superposition operator of
a polynomial such that $N \in C(W, H)$.
Taking on our comments in \cref{rem:pod_reduction_of_C},
we apply the argument of \cref{prp:cont_pod_w} to $N(y^{(1)}_\psi)$
(to the set $\mathcal{C}$). Thus there exist orthonormal vectors
$\phi^1,\ldots,\phi^m \in H$
such that the approximation error of the (pointwise a.e.) orthogonal projection
\[ e_2 \coloneqq \|N(y^{(1)}_\psi)
- \PPi_\phi N(y^{(1)}_\psi)\|_{L^2(0,T;H)}
\]
can be made arbitrarily small, where
\[ \PPi_\phi N(y^{(1)}_\psi)(t)
 = \sum_{j=1}^m (\phi^j, N(y^{(1)}_\psi)(t))_H\phi^j.
\]

We define the second simplified Newton step that is based on the
approximations $\PPi_\psi d^{(1)}$ and $\PPi_\phi N(y^{(1)}_\psi)$
\[ d^{(2)}[\psi,\phi](t) \coloneqq
\int_0^t T(t - s)\left(N(\bar{y}) - C \PPi_\psi d^{(1)} - \PPi_\phi N(y^{(1)}_\psi)\right)\,\dd s.
\]

\begin{lemma}\label{lem:approximation_d2_d2phipsi}
Let the assumptions of \cref{thm:first_newton_step_spaces}
hold. Let $N$ be the superposition operator of
a polynomial such that $N \in C(W, H)$.
Then there exists $\kappa_1 > 0$, independent of $(\phi^i)_i$
and $(\psi^j)_j$, such that
\[ \|d^{(2)} - d^{(2)}[\psi,\phi]\|_{L^2(0,T;W)}
 \le \kappa_1(e_1 + e_2 + \ell(e_1,y^{(1)})). \]
\end{lemma}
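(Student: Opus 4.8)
The plan is to recognize the difference $d^{(2)} - d^{(2)}[\psi,\phi]$ as the $Y$-solution of a single linearized equation $E_y(\bar y,\bar u)\,\xi = (g,0)$ whose source $g$ is small in $L^2(0,T;H)$, and then to use the bounded invertibility of $E_y(\bar y,\bar u)$ from \cref{ass:setting} to convert this source bound into the desired $L^2(0,T;W)$ bound on $\xi$. First I would subtract the two representations. Since $d^{(2)}$ solves \eqref{eq:newt2} and, by uniqueness of mild solutions, $d^{(2)}[\psi,\phi]$ is the $Y$-solution of the same linear operator with $d^{(1)}$ and $N(y^{(1)})$ replaced by $\PPi_\psi d^{(1)}$ and $\PPi_\phi N(y^{(1)}_\psi)$, the difference $\xi \coloneqq d^{(2)} - d^{(2)}[\psi,\phi] \in Y$ satisfies
\[
  E_y(\bar y,\bar u)\,\xi = (g,0), \qquad
  g \coloneqq -C\bigl(d^{(1)} - \PPi_\psi d^{(1)}\bigr) - \bigl(N(y^{(1)}) - \PPi_\phi N(y^{(1)}_\psi)\bigr),
\]
where the time-independent term $N(\bar y)$ and the vanishing initial data cancel.

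Next I would estimate $\|g\|_{L^2(0,T;H)}$ by the triangle inequality. For the linear part, the remark following \cref{ass:setting} gives $C = -N_y(\bar y)\in\mathcal L(H,H)$, hence $C\in\mathcal L(W,H)$ via $W\hookrightarrow H$, so $\|C(d^{(1)}-\PPi_\psi d^{(1)})\|_{L^2(0,T;H)}\le\|C\|_{\mathcal L(W,H)}\,e_1$ by the definition of $e_1$. For the nonlinear part I would insert $\pm N(y^{(1)}_\psi)$ and split
\[
  \|N(y^{(1)}) - \PPi_\phi N(y^{(1)}_\psi)\|_{L^2(0,T;H)}
  \le \|N(y^{(1)}) - N(y^{(1)}_\psi)\|_{L^2(0,T;H)} + \|N(y^{(1)}_\psi) - \PPi_\phi N(y^{(1)}_\psi)\|_{L^2(0,T;H)},
\]
the two summands being exactly $\ell(e_1,y^{(1)})$ and $e_2$ by definition. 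This yields $\|g\|_{L^2(0,T;H)}\le\|C\|_{\mathcal L(W,H)}\,e_1+\ell(e_1,y^{(1)})+e_2$.

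Finally I would pass from the source to the solution. By \cref{ass:setting}, $E_y(\bar y,\bar u)^{-1}\in\mathcal L(Z,Y)$, and the source $(g,0)$ has vanishing initial component with $g\in L^2(0,T;H)$, so its $Z$-norm is controlled by $\|g\|_{L^2(0,T;H)}$ (for the guiding example $Z=L^2(0,T;H)\times W$, so the two agree). Consequently $\|\xi\|_Y\le\|E_y(\bar y,\bar u)^{-1}\|_{\mathcal L(Z,Y)}\,\|(g,0)\|_Z$, and the continuous embeddings $Y\hookrightarrow L^2(0,T;V)$ and $V\hookrightarrow W$ give $\|\xi\|_{L^2(0,T;W)}\le c\,\|\xi\|_Y$. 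Collecting $\|C\|_{\mathcal L(W,H)}$, $\|E_y(\bar y,\bar u)^{-1}\|_{\mathcal L(Z,Y)}$, and the embedding constants into a single $\kappa_1>0$ then closes the argument.

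The hard part will be this last step, namely lifting the $L^2(0,T;H)$ smallness of $g$ to the $L^2(0,T;W)$ norm of $\xi$: this is precisely the parabolic regularity gain encoded in $E_y(\bar y,\bar u)^{-1}\in\mathcal L(Z,Y)$, in analogy with the estimate $\|v\|_{\mathcal W}\le C\|z\|_Z$ established in the proof of \cref{prp:sl_derivative}. The only genuine checks are that $(g,0)\in Z$ — which is automatic since $(g,0)=E_y(\bar y,\bar u)\xi$ with $\xi\in Y$ — and that $\kappa_1$ is independent of the \ac{POD} bases $(\psi^j)_j$ and $(\phi^i)_i$; the latter holds because every constant above depends only on the fixed spaces, the operator $C$, and the linearization point $\bar y$, and not on the chosen projections.
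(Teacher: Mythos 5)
Your proposal is correct and takes essentially the same route as the paper's proof: the difference $d^{(2)} - d^{(2)}[\psi,\phi]$ is viewed as the solution of the linearized equation with source $-C(d^{(1)}-\PPi_\psi d^{(1)}) - (N(y^{(1)}) - \PPi_\phi N(y^{(1)}_\psi))$, a parabolic stability estimate converts the $L^2(0,T;H)$ size of that source into the $L^2(0,T;W)$ bound (the paper simply invokes ``parabolic regularity theory'' where you route the argument through $E_y(\bar y,\bar u)^{-1}\in\mathcal{L}(Z,Y)$ and the embeddings), and the source is split exactly as you do into a $\|C\|\,e_1$ term plus, after inserting $\pm N(y^{(1)}_\psi)$, the terms $\ell(e_1,y^{(1)})$ and $e_2$. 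The paper then just collects constants as $\kappa_1=\kappa_2\max\{\kappa_3,1\}$, matching your final step, so there is no gap.
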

\begin{proof}
The functions $d^{(2)}$ and $d^{(2)}[\psi,\phi]$ are unique solutions
of \eqref{eq:newt2} (where the right-hand
side is changed appropriately in the case of $d^{(2)}[\psi,\phi]$).
Parabolic regularity theory gives the estimate
\begin{multline*}
\|d^{(2)} - d^{(2)}[\psi,\phi]\|_{L^2(0,T;W)} \le \\
\kappa_2\|Cd^{(1)} + N(y^{(1)}) - C \PPi_\psi d^{(1)} - \PPi_\phi N(y^{(1)}_\psi)\|_{L^2(0,T;H)}
\end{multline*}
for some $\kappa_2 > 0$. The boundedness of $C$ gives the estimate
$\|Cd^{(1)} - C d^{(1)}_\psi\|_{L^2} \le \kappa_3 e_1$,
where $\kappa_2 > 0$ is the operator norm of $C$.
The insertion of a zero and the triangle inequality yield
\[ \|N(y^{(1)}) - \PPi_\phi N(y^{(1)}_\psi)\|_{L^2(0,T;H)}
\le e_2 + \ell(e_1,y^{(1)}). \]
Thus, the claim holds with the choice
$\kappa_1 \coloneqq \kappa_2 \max\{\kappa_3,1\}$.
\end{proof}
We define
\[
   y^{(2)}_{\phi\psi}
   \coloneqq \bar{y} + \PPi_\psi d^{(1)} + d^{(2)}[\psi,\phi],
\]
where we reapply the argument of \cref{prp:cont_pod_w} and obtain a
POD approximation of $d^{(2)}[\psi,\phi]$ with basis vectors
$\{\theta^1,\ldots,\theta^k\} \subset W$. We define the approximation error
\[ e_3 \coloneqq
   \|\PPi_\theta d^{(2)}[\phi,\psi] - d^{(2)}[\phi,\psi]\|_{L^2(0,T;W)}
\]
and
\[ y^{(2)}_{\phi\psi\theta} \coloneqq \bar{y}
+ \PPi_\psi d^{(1)} 
+ \PPi_\theta d^{(2)}[\psi,\phi]. \]
We are ready to prove our main approximation result.

\begin{theorem}
Let the assumptions of \cref{thm:first_newton_step_spaces}
hold. Let $N$ be the superposition operator of
a polynomial such that $N \in C(W, H)$.
Let $y \in Y$ solve \eqref{eq:nlse}. Then
there exists $\kappa_1 > 0$, independent of $(\phi^i)_i$,
$(\psi^j)_j$, and $(\theta^\ell)_\ell$, such that
\[ \|y - y^{(2)}_{\phi\psi}\|_{L^2(0,T;W)}
   \le \|y - y^{(2)}\|_{L^2(0,T;W)} + (1 + \kappa_1)e_1 + \kappa_1(\ell(e_1,y^{(1)}) + e_2)
\]
and 
\[ \|y - y^{(2)}_{\phi\psi\theta}\|_{L^2(0,T;W)}
   \le \|y - y^{(2)}\|_{L^2(0,T;W)} + (1 + \kappa_1)e_1 + \kappa_1(\ell(e_1,y^{(1)}) + e_2) + e_3.
\]
Moreover, let $(\psi^i)_{i \in \N}$ and $(\lambda_i)_{i \in \N}$
be as in \cref{prp:cont_pod_w}, let $(\phi^j)_{j \in \N}$
and $(\mu_j)_{j \in \N}$ be an orthonormal basis of eigenvectors
and corresponding eigenvalues of a POD approximation of
$N(y^{(1)}_\psi)$, and let $(\theta^\ell)_{\ell \in \N}$
and $(\nu_\ell)_{\ell \in \N}$ be an orthonormal basis of
eigenvectors and corresponding eigenvalues of a POD approximation
of $d^{(2)}[\phi,\psi]$. Then
\begin{multline*}
\|y - y^{(2)}_{\phi\psi}\|_{L^2(0,T;W)}
\le \\
\|y - y^{(2)}\|_{L^2(0,T;W)} + (1 + \kappa_1)\sum_{i=n+1}^\infty\lambda_i  + \kappa_1 \sum_{j=m+1}^\infty\mu_j + \kappa_1 \ell(e_1,y^{(1)}),
\end{multline*}
and
\begin{multline*}
\|y - y^{(2)}_{\phi\psi\theta}\|_{L^2(0,T;W)}
\le \\
\|y - y^{(2)}\|_{L^2(0,T;W)} + (1 + \kappa_1)\sum_{i=n+1}^\infty\lambda_i  + \kappa_1 \sum_{j=m+1}^\infty\mu_j + \kappa_1 \ell(e_1,y^{(1)})
+ \sum_{\ell=k+1}^\infty \nu_\ell.
\end{multline*}
\end{theorem}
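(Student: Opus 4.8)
The plan is to derive both families of estimates from the triangle inequality alone, so that every term reduces to an approximation error already controlled in \cref{sec:approximation_newt1} and in \cref{lem:approximation_d2_d2phipsi}. First I would write the three iterates in a common additive form. The genuine second Newton iterate is $y^{(2)} = \bar{y} + d^{(1)} + d^{(2)}$, whereas by definition $y^{(2)}_{\phi\psi} = \bar{y} + \PPi_\psi d^{(1)} + d^{(2)}[\psi,\phi]$. Subtracting cancels $\bar{y}$ exactly and gives the clean splitting
\begin{align*}
y^{(2)} - y^{(2)}_{\phi\psi}
= \left(d^{(1)} - \PPi_\psi d^{(1)}\right) + \left(d^{(2)} - d^{(2)}[\psi,\phi]\right).
\end{align*}
Inserting $\pm y^{(2)}$ into $y - y^{(2)}_{\phi\psi}$ and applying the triangle inequality in $L^2(0,T;W)$ then yields
\begin{align*}
\|y - y^{(2)}_{\phi\psi}\|_{L^2(0,T;W)}
&\le \|y - y^{(2)}\|_{L^2(0,T;W)}
+ \|d^{(1)} - \PPi_\psi d^{(1)}\|_{L^2(0,T;W)}\\
&\quad + \|d^{(2)} - d^{(2)}[\psi,\phi]\|_{L^2(0,T;W)}.
\end{align*}

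Second, I would identify the two trailing summands. The first is exactly $e_1$ by its definition in \cref{sec:approximation_newt1}, and the second is at most $\kappa_1(e_1 + e_2 + \ell(e_1,y^{(1)}))$ by \cref{lem:approximation_d2_d2phipsi}. Collecting the two $e_1$ contributions produces the coefficient $(1+\kappa_1)$ and gives the first asserted estimate. For the fully reduced iterate I would apply the triangle inequality once more, using that $y^{(2)}_{\phi\psi} - y^{(2)}_{\phi\psi\theta} = d^{(2)}[\psi,\phi] - \PPi_\theta d^{(2)}[\psi,\phi]$; the additional term is then precisely $e_3$, which yields the second estimate.

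Third, to obtain the spectral versions I would replace each projection error by the tail of the corresponding POD spectrum, exactly as in the corollary of \cref{sec:approximation_newt1}. Applying \cref{prp:cont_pod_w} to $d^{(1)}$ turns $e_1$ into $\sum_{i=n+1}^\infty \lambda_i$; the same argument applied to $N(y^{(1)}_\psi)$ (the generating set $\mathcal{C}$) turns $e_2$ into $\sum_{j=m+1}^\infty \mu_j$; and applied to $d^{(2)}[\psi,\phi]$ it turns $e_3$ into $\sum_{\ell=k+1}^\infty \nu_\ell$. The term $\kappa_1 \ell(e_1,y^{(1)})$ is carried through unchanged and may itself be bounded by a multiple of $e_1$ in concrete settings, as in \cref{exa:ell_e1_y1}.

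I do not expect a genuine obstacle here, because the analytic substance has already been discharged elsewhere: the parabolic-regularity estimate relating $\|d^{(2)} - d^{(2)}[\psi,\phi]\|$ to $e_1$, $e_2$, and $\ell(e_1,y^{(1)})$ lives in \cref{lem:approximation_d2_d2phipsi}, and the compactness and Hilbert--Schmidt arguments underlying the POD error identities live in \cref{prp:cont_pod_w}. The only points demanding care are bookkeeping ones: verifying that the additive splitting of $y^{(2)} - y^{(2)}_{\phi\psi}$ is exact so that no cross terms are silently dropped, and matching each of $e_1$, $e_2$, $e_3$ to the POD spectrum of the correct function --- namely $d^{(1)}$, $N(y^{(1)}_\psi)$, and $d^{(2)}[\psi,\phi]$, respectively --- when passing to the spectral bounds.
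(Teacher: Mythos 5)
Your proposal is correct and takes essentially the same route as the paper's own proof, which likewise combines the projection error $e_1$ from \cref{sec:approximation_newt1} with \cref{lem:approximation_d2_d2phipsi} via the triangle inequality for the first two estimates, and then replaces $e_1$, $e_2$, and $e_3$ by the spectral tails by applying \cref{prp:cont_pod_w} to $d^{(1)}$, to $N(y^{(1)}_\psi)$ in $L^2(0,T;H)$, and to $d^{(2)}[\psi,\phi]$, respectively. Your write-up simply makes explicit the exact additive splitting of $y^{(2)} - y^{(2)}_{\phi\psi}$ and $y^{(2)}_{\phi\psi} - y^{(2)}_{\phi\psi\theta}$ that the paper leaves implicit.
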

\begin{proof}
The first and second estimates follow from the estimates in
\cref{sec:approximation_newt1} and \cref{lem:approximation_d2_d2phipsi}.
The third and fourth estimates follow from \cref{prp:cont_pod_w}
and the fact that the proof of \cref{prp:cont_pod_w} can be replayed for
a POD approximation of $N(y^{(1)}_\psi)$ in the space $L^2(0,T;H)$ with basis
$(\phi^j)_{j \in \N}$ and eigenvalues $(\mu_j)_{j \in \N}$,
which gives $e_2 \le \sum_{j=m+1}^\infty\mu_j$.
An analogous argument gives $e_3 \le \sum_{\ell=k+1}^\infty\mu_\ell$.
\end{proof}

\begin{remark}\label{rem:approximation_newt2}
This means that the error of approximating $y$ with a POD approximation
of both Newton steps can be bounded by
the sum of the error of the Newton steps and four terms.
Two of them are the POD approximation errors of
the first Newton step $d^{(1)}$ and the term $N(y^{(1)}_\psi)$.
The third term relates the POD approximation error of $d^{(1)}$
to the corresponding error between $N(y^{(1)}_\psi)$ and $N(y^{(1)})$
in $L^2(0,T;H)$. As we have seen in \cref{exa:ell_e1_y1},
this error may depend on the unknown quantity $\|y^{(1)}\|_Y$, and
additional assumptions such as restrictions of the control input may
be necessary to ensure boundedness of $\|y^{(1)}\|_Y$.
The last term is the POD approximation error of
$d^{(2)}[\phi,\psi]$. For this POD approximation, the snapshots can
again be collected from impulse responses by using the characterization
developed in \cref{thm:second_newton_step_spaces}.
\end{remark}

\subsection{Discretization of the second simplified Newton step}\label{sec:discretization_second_newton_step}

We consider the $\theta$-scheme for time discretization
that we have used in \cref{sec:first_newton_step_time_discrete}
already. Again, let $0 = t_0 < \ldots < T_K = T$,
$\Delta t = t_{k+1} - t_k$ be a uniform time grid, and
let $\theta \in [\frac{1}{2},1]$.
Moreover, for $i \in \{1,\ldots,n\}$ and $j \in \{1,\ldots,m\}$
let $(u^i_k)_{k\in \{0,\ldots,K - 1\}}$ and
$(v^j_k)_{k\in \{0,\ldots,K - 1\}}$
be interval-wise constant discretizations of $u^i$ and $v^j$.

Then the discrete analogue of \eqref{eq:newt2} is
\begin{gather}\label{eq:d2k}
\begin{aligned}
  \frac{d^{(2)}_{k+1}-d^{(2)}_k}{\Delta t}-(A+C) (\theta d^{(2)}_{k+1}+(1-\theta) d^{(2)}_k)&= N(\bar y)
  - \sum_{i=1}^n u^i_k C \psi^i\\ 
  &\phantom{=}\enskip- \sum_{j=1}^m v^j_k c_j, ~ 0\le k<K,\\ 
d^{(2)}_0&= 0.
\end{aligned}
\end{gather}
Those of \eqref{eq:newt2_b} and \eqref{eq:newt2_c} are
\begin{gather}\label{eq:betaik}
\begin{aligned}
  \frac{\beta^i_{k+1}-\beta^i_k}{\Delta t}
  -(A+C) (\theta \beta^i_{k+1}+(1-\theta) \beta^i_k)&=0,\quad
0\le k <K,\\
(I+\Delta t (1-\theta) (A+C)) \beta^i_0 &= -C\psi^i
\end{aligned}
\end{gather}
and
\begin{gather}\label{eq:gammajk}
\begin{aligned}
  \frac{\gamma^j_{k+1}-\gamma^i_k}{\Delta t}
  -(A+C) (\theta \gamma^j_{k+1}+(1-\theta) \gamma^j_k)&=0,\quad
0\le k <K,\\
(I+\Delta t (1-\theta) (A+C)) \gamma^j_0 &= c_j.
\end{aligned}
\end{gather}
The  analog of \eqref{eq:newt2}
with $d^{(2)} = 0$ and $N(y^{(1)}) = 0$ is
\begin{gather}\label{eq:rk}
\begin{aligned}
  \frac{r_{k+1}-r_k}{\Delta t}-(A+C) (\theta r_{k+1}+(1-\theta) r_k)
  &= N(\bar y),~ 0\le k<K,\\ 
  r_0&= 0.
\end{aligned}
\end{gather}
We obtain the following discrete convolution formula for the
$\theta$-scheme.

\begin{proposition}\label{prp:newt2_conv_theta}
Consider the first simplified Newton step $(d^{(1)}_k)$ for the
$\theta$-scheme \eqref{eq:d1k}.
Let $(\beta^i_k)$ and $(\gamma^j_k)$ be the solutions of
\eqref{eq:betaik} and \eqref{eq:gammajk}, respectively.
Then $(d^{(2)}_k)$ can be represented by the discrete convolution
formula
\begin{align*}
	d^{(2)}_{k} = r_k
	+ \Delta t \sum_{i=1}^n \sum_{\ell=0}^{k-1}\beta^i_{k-\ell}u^i_{\ell}
	+ \Delta t \sum_{j=1}^n \sum_{\ell=0}^{k-1}\gamma^j_{k-\ell}v^j_{\ell},
\end{align*}
where we use the convention that the sum vanishes for $k=0$.
Consequently,
\[ d^{(2)}_{k} - r_k \in \spn\left\{ \bigcup_{i=1}^n \{\beta^i_1,\ldots,\beta^i_k\} \cup  \bigcup_{j=1}^m \{\gamma^j_1,\ldots,\gamma^j_k\} \right\}. \]
\end{proposition}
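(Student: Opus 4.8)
The plan is to reduce the claim to three independent instances of the discrete summation-by-parts identity already established in the proof of \cref{prp:conv_theta}, and then to invoke the linearity of the $\theta$-scheme. Since \eqref{eq:d2k} is a linear recursion in $(d^{(2)}_k)$ with a forcing that splits as $N(\bar y) - \sum_{i=1}^n u^i_k C\psi^i - \sum_{j=1}^m v^j_k c_j$, I would build the solution by superposition: the particular solution carrying the constant forcing $N(\bar y)$ with zero initial value is exactly $(r_k)$ by \eqref{eq:rk}, so it remains to produce the two forcing contributions $-\sum_i u^i_k C\psi^i$ and $-\sum_j v^j_k c_j$ from the convolution sums, each with vanishing initial data.

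First I would treat a single impulse response. Fixing $i$, set $b^i_k \coloneqq \Delta t\sum_{\ell=0}^{k-1}\beta^i_{k-\ell}u^i_\ell$, so that $b^i_0 = 0$. The heart of the argument is to form the difference quotient $(b^i_{k+1}-b^i_k)/\Delta t$, shift the summation index, and substitute the homogeneous recursion \eqref{eq:betaik} for the differences $\beta^i_{k+1-\ell}-\beta^i_{k-\ell}$. Exactly as in \cref{prp:conv_theta}, the interior terms telescope into $(A+C)(\theta b^i_{k+1}+(1-\theta)b^i_k)$, and the remaining boundary term collects into $(I+\Delta t(1-\theta)(A+C))\beta^i_0\,u^i_k$. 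By the discrete initial condition in \eqref{eq:betaik} this equals $-C\psi^i u^i_k$, so $(b^i_k)$ solves the $\theta$-scheme with source $-u^i_k C\psi^i$ and zero initial value. The identical computation applied to $c^j_k \coloneqq \Delta t\sum_{\ell=0}^{k-1}\gamma^j_{k-\ell}v^j_\ell$, this time using \eqref{eq:gammajk}, yields a source $(I+\Delta t(1-\theta)(A+C))\gamma^j_0\,v^j_k$, again with zero initial value.

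Summing over $i$ and $j$ and adding $(r_k)$, the linearity of the recursion shows that $r_k + \sum_{i=1}^n b^i_k + \sum_{j=1}^m c^j_k$ satisfies the same recursion and the same initial condition $d^{(2)}_0 = 0$ as $(d^{(2)}_k)$. Since the scheme advances by applying the invertible operator $(I - \Delta t\,\theta(A+C))^{-1}$ at each step, the discrete trajectory is uniquely determined by its forcing and initial value, and the claimed convolution formula follows. The span statement is then immediate: as $\ell$ runs through $0,\ldots,k-1$ the shifted index $k-\ell$ runs through $1,\ldots,k$, so each $d^{(2)}_k - r_k$ is a finite $\R$-linear combination of $\beta^i_1,\ldots,\beta^i_k$ and $\gamma^j_1,\ldots,\gamma^j_k$, which is precisely the asserted containment.

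I expect the only delicate point to be the sign and consistency bookkeeping of the boundary term $(I+\Delta t(1-\theta)(A+C))\gamma^j_0$: one must verify that the discrete initial data prescribed in \eqref{eq:gammajk} reproduces, with the correct sign, the forcing $-\sum_j v^j_k c_j$ appearing in \eqref{eq:d2k}, in parallel with the clean match $(I+\Delta t(1-\theta)(A+C))\beta^i_0 = -C\psi^i$ for the $\beta$-block. Everything else is the same Abel-summation algebra already carried out for the first Newton step, so no new estimates are needed.
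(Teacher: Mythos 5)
Your proposal is correct and takes essentially the same route as the paper's proof: define the discrete convolution sums, repeat the summation-by-parts computation of \cref{prp:conv_theta} using \eqref{eq:betaik} and \eqref{eq:gammajk} so that each sum solves the $\theta$-scheme with the corresponding impulsive forcing and zero initial value, and conclude by superposition and uniqueness, with the span claim following from the index shift $\ell \mapsto k-\ell$. The ``delicate point'' you flag is in fact a genuine sign inconsistency in the paper itself: its own computation produces the source $+c_j v^j_k$ for the $\gamma$-block while \eqref{eq:d2k} carries $-v^j_k c_j$ (note that \eqref{eq:gammajk} prescribes $+c_j$ whereas the continuous analogue \eqref{eq:newt2_c} prescribes $-c_j$), so the sign must be absorbed into the coefficients $v^j$ or corrected in one of the two equations, exactly as you anticipated.
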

\begin{proof}
We define $e^i_k \coloneqq \Delta t \sum_{\ell=0}^{k-1} \beta^i_{k - \ell} u^i_{\ell}$ and $f^j_k \coloneqq \Delta t \sum_{\ell=0}^{k-1} \gamma^j_{k - \ell} v^j_{\ell}$.
Then $e^i_0=0$, $f^j_0 = 0$, and from \eqref{eq:betaik}
and \eqref{eq:gammajk} we obtain---analogously to \cref{prp:conv_theta}---that
\begin{align*}
\frac{e^i_{k+1}-e^i_k}{\Delta t} &= \sum_{\ell=0}^{k}(\beta^i_{k+1-\ell} - \beta^i_{k-\ell}) u^i_{\ell} + \beta^i_{0}u^i_k 
= (A + C)(\theta e^i_{k+1}-(1-\theta)e^i_k) - Cb_iu^i_k
\end{align*}
and
\begin{align*}
\frac{f^j_{k+1}-f^j_k}{\Delta t} &= \sum_{\ell=0}^{k}(\gamma^j_{k+1-\ell} - \gamma^j_{k-\ell}) v^j_{\ell} + \beta^j_{0}v^j_k
= (A + C)(\theta f^j_{k+1}-(1-\theta)f^j_k) + c_j v^j_k.
\end{align*}
By superposition $r_k + \sum_{i=1}^n e^i_k + \sum_{j=1}^m f^j_k$
satisfies \eqref{eq:d2k} as asserted. The last claim follows
by inspection.
\end{proof}

\section{Galerkin ansatz}\label{sec:galerkin}

We derive error estimates of a Galerkin ansatz with the
\ac{POD} basis vectors to approximate the space $W$.
To this end, we consider the bilinear forms $a : W \times W \to \R$
and $c : W \times W \to \R$ that arise from the linear operators
$A$ and $C$ in the general setting of \cref{sec:general_framework}.

\subsection{Error bound for Newton steps on \ac{POD} model}

We consider $W_\psi \coloneqq \spn\{\psi^1,\ldots,\psi^n\} \subset W$.
Let $d^{(1)}_{\psi}$ solve \eqref{eq:newt1} on $W_\psi$; that is,
\begin{gather}\label{eq:d1_psi}
\begin{aligned}
(\partial_t d^{(1)}_\psi, v_\psi)_H
+ a(d^{(1)}_\psi, v_\psi)
+ c(d^{(1)}_\psi, v_\psi) - (Fu, v_\psi)_H &= 0, \\
(d^{(1)}_\psi(0) - \bar{y} - y_0, v_\psi)_H &= 0
\end{aligned}\tag{N1}
\end{gather}
for all $v_\psi \in W_\psi$.
Moreover, we consider the subspace
$W_\theta \coloneqq \spn\{\theta^1,\ldots,\theta^k\}$.
Let $d^{(2)}_{\theta}$ solve the second simplified Newton step
\eqref{eq:newt2} on $W_{\theta}$; that is,
\begin{gather}\label{eq:d2_theta}
\begin{aligned}
(\partial_t d^{(2)}_\theta, v_\theta)_H
+ a(d^{(2)}_\theta, v_\theta)
+ c(d^{(2)}_\theta, v_\theta) - (r, v_\theta)_H - c(d^{(1)}_\psi, v_\theta) &= 0, \\
(d^{(2)}_\theta(0), v_\theta)_H &= 0
\end{aligned}\tag{N2}
\end{gather}
for all $v_\theta \in W_\theta$, where $r = N(\bar{y}) - \PPi_\phi N(y^{(1)}_\psi)$.

\begin{theorem}
Let $a + c$ be a coercive bilinear form on $W$.
Let $(\psi^i)_i$, $(\phi^j)_j$, and $(\theta^\ell)_\ell$
be as in \cref{sec:approximation_newt1,sec:approximation_newt2}.
Then there exist $\kappa_2, \kappa_3 > 0$ such that 
\begin{multline*}
\|y - (\bar{y} + d^{(1)}_\psi + d^{(2)}_\theta)\|_{L^2(0,T;W)}\le \\
   \|y - y^{(2)}\|_{L^2(0,T;W)}
+ \kappa_2(1 + \kappa_1\kappa_3) e_1 + \kappa_1\kappa_3(\ell(e_1,y^{(1)}) + e_2) + 
\kappa_3 e_3.
\end{multline*}
\end{theorem}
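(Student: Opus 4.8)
The plan is to reduce the claim to the two Galerkin errors $\|d^{(1)} - d^{(1)}_\psi\|_{L^2(0,T;W)}$ and $\|d^{(2)} - d^{(2)}_\theta\|_{L^2(0,T;W)}$ and to control each via a quasi-optimality (C\'ea-type) estimate, which is available because $a+c$ is coercive on $W$. Since $y^{(2)} = \bar y + d^{(1)} + d^{(2)}$, the triangle inequality gives
\[
\|y - (\bar y + d^{(1)}_\psi + d^{(2)}_\theta)\|_{L^2(0,T;W)}
\le \|y - y^{(2)}\|_{L^2(0,T;W)} + \|d^{(1)} - d^{(1)}_\psi\|_{L^2(0,T;W)} + \|d^{(2)} - d^{(2)}_\theta\|_{L^2(0,T;W)}.
\]
The functions $d^{(1)}$ and $d^{(1)}_\psi$ solve the linear parabolic problem \eqref{eq:newt1} and its Galerkin restriction \eqref{eq:d1_psi} to $W_\psi$. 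Standard parabolic Galerkin theory then yields quasi-optimality in $L^2(0,T;W)$: the error is bounded by a constant times the best approximation of $d^{(1)}$ in $W_\psi$, together with the $H$-projection error of the initial datum, which is of the same order. Using $\PPi_\psi d^{(1)}$ as competitor and $\|d^{(1)} - \PPi_\psi d^{(1)}\|_{L^2(0,T;W)} = e_1$, I obtain $\|d^{(1)} - d^{(1)}_\psi\|_{L^2(0,T;W)} \le \kappa_2 e_1$, which produces the lone $\kappa_2 e_1$ summand.

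For the second step, $d^{(2)}_\theta$ solves \eqref{eq:d2_theta} on $W_\theta$; this is the Galerkin discretization of \eqref{eq:newt2} with the data $Cd^{(1)}$ and $N(y^{(1)})$ replaced by $Cd^{(1)}_\psi$ and $\PPi_\phi N(y^{(1)}_\psi)$, while the zero initial datum is unchanged. Since only the right-hand side is perturbed (the bilinear form $a+c$ is the same), I would invoke a Strang-type estimate,
\[
\|d^{(2)} - d^{(2)}_\theta\|_{L^2(0,T;W)}
\le \kappa_3\Bigl(\inf_{v \in W_\theta}\|d^{(2)} - v\|_{L^2(0,T;W)} + \|g - g_\psi\|_{L^2(0,T;H)}\Bigr),
\]
where $g = N(\bar y) - Cd^{(1)} - N(y^{(1)})$ and $g_\psi = N(\bar y) - Cd^{(1)}_\psi - \PPi_\phi N(y^{(1)}_\psi)$. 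For the best-approximation term I use $\PPi_\theta d^{(2)}[\psi,\phi] \in W_\theta$ as competitor, so that \cref{lem:approximation_d2_d2phipsi} and the definition of $e_3$ yield $\inf_{v}\|d^{(2)} - v\|_{L^2(0,T;W)} \le \|d^{(2)} - d^{(2)}[\psi,\phi]\|_{L^2(0,T;W)} + e_3 \le \kappa_1(e_1 + e_2 + \ell(e_1,y^{(1)})) + e_3$. For the consistency term I insert $N(y^{(1)}_\psi)$ and use the boundedness of $C$ together with the first-step bound, giving $\|g - g_\psi\|_{L^2(0,T;H)} \le \|C\|\,\kappa_2 e_1 + e_2 + \ell(e_1,y^{(1)})$, where the last two summands come from the split of $N(y^{(1)}) - \PPi_\phi N(y^{(1)}_\psi)$ exactly as in \cref{lem:approximation_d2_d2phipsi}.

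Collecting the three error quantities and enlarging $\kappa_2,\kappa_3$ to absorb $\|C\|$ and the additive constants (taking $\kappa_1 \ge 1$ without loss of generality) produces precisely the coefficients $\kappa_2(1+\kappa_1\kappa_3)$ on $e_1$, $\kappa_1\kappa_3$ on $\ell(e_1,y^{(1)}) + e_2$, and $\kappa_3$ on $e_3$. I expect the main obstacle to be the rigorous justification of the C\'ea/Strang quasi-optimality for the coercive parabolic Galerkin schemes \eqref{eq:d1_psi} and \eqref{eq:d2_theta}---in particular the treatment of the time derivative, the projected initial datum of the first step, and the choice of the energy norm in which quasi-optimality actually holds---whereas the remaining triangle-inequality bookkeeping, which bridges the Galerkin iterate $d^{(1)}_\psi$ and the projection $\PPi_\psi d^{(1)}$ underlying $e_3$, is routine.
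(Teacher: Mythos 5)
Your proposal is correct and follows the same skeleton as the paper's proof: split off $\|y-y^{(2)}\|$ by the triangle inequality, bound $\|d^{(1)}-d^{(1)}_\psi\|$ by a coercivity-based quasi-optimality estimate against the competitor $\PPi_\psi d^{(1)}$, and handle the second step using $\PPi_\theta d^{(2)}[\psi,\phi]$ as competitor together with \cref{lem:approximation_d2_d2phipsi}. The one genuine difference is how the second-step error is decomposed. The paper splits at $d^{(2)}[\phi,\psi]$, bounds $\|d^{(2)}-d^{(2)}[\phi,\psi]\|$ by the lemma, and then asserts a pure C\'ea bound $\|d^{(2)}[\phi,\psi]-d^{(2)}_\theta\|\le\kappa_3\|d^{(2)}[\phi,\psi]-\PPi_\theta d^{(2)}[\phi,\psi]\|$, implicitly treating $(d^{(2)}[\phi,\psi],d^{(2)}_\theta)$ as an exact/Galerkin pair; you instead measure $d^{(2)}_\theta$ against $d^{(2)}$ directly via a Strang-type estimate with an explicit consistency term $\|g-g_\psi\|_{L^2(0,T;H)}$. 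Your variant is slightly longer but more faithful on a point where the paper is terse: the Galerkin problem \eqref{eq:d2_theta} is driven by $c(d^{(1)}_\psi,\cdot)$, whereas $d^{(2)}[\phi,\psi]$ is defined with $C\PPi_\psi d^{(1)}$, so the two problems have data differing by $C(d^{(1)}_\psi-\PPi_\psi d^{(1)})$, a term of order $e_1$; your consistency term captures exactly this mismatch, and it is precisely what produces the product structure $\kappa_2\kappa_1\kappa_3\,e_1$ and $\kappa_1\kappa_3(\ell(e_1,y^{(1)})+e_2)$ in the stated constants, which the paper's shorter bookkeeping only reproduces after enlarging constants. Both routes rest on the same unproven ingredient---quasi-optimality of the coercive parabolic Galerkin scheme, for which the paper defers to the proof of Theorem 2.3 in \cite{bernardi1985conforming}---so the obstacle you flag is no worse than what the paper itself accepts.
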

\begin{proof}
The coercivity of $a + c$ allows us to obtain the error bounds
\begin{align*}
\|d^{(1)} - d^{(1)}_\psi\|_{L^2(0,T;W)}
&\le \kappa_2 \|d^{(1)} - \PPi_\psi d^{(1)}\|_{L^2(0,T;W)},\text{ and}\\
\|d^{(2)}[\phi,\psi] - d^{(2)}_\theta\|_{L^2(0,T;W)}
&\le \kappa_3 \|d^{(2)}[\phi,\psi] - \PPi_\theta d^{(2)}[\phi,\psi]\|_{L^2(0,T;W)}
\end{align*}
for $\kappa_2, \kappa_3 > 0$ by following, for example, the proof of
Theorem 2.3 in \cite{bernardi1985conforming}. Then the claim follows
after combining these error bounds with the triangle inequality
\[\|d^{(2)} - d^{(2)}_\theta\|_{L^2(0,T;W)} \le 
\|d^{(2)} - d^{(2)}[\phi,\psi]\|_{L^2(0,T;W)}
+\|d^{(2)}[\phi,\psi] - d^{(2)}_\theta\|_{L^2(0,T;W)},\]
the bound from \cref{lem:approximation_d2_d2phipsi},
and the \ac{POD} approximation errors
$e_1$ and $e_3$.
\end{proof}

\subsection{Galerkin approximation error for the nonlinear equation}

Let $a : W \times W \to \R$ be a continuous and coercive
bilinear form, and let $N : W \to H$ be a polynomial.
We consider the variational formulations of \eqref{eq:nlse} on $W$,
\begin{gather}\label{eq:q}
\left(\partial_t y, v\right)_H
+ a(y, v)
+ (N(y), v)_H - (Fu, v)_H = 0,\quad (y(0) - y_0, v) = 0
\tag{\text{Q}}
\end{gather}
for all $v \in W$, and on $W_\varrho \coloneqq \spn \{\varrho^{1},\ldots,\varrho^{k}\} \subset W$,
\begin{gather}\label{eq:q_varrho}
\left(\partial_t y_\varrho, v_\varrho\right)_H
+ a(y_\varrho, v_\varrho)
+ (N(y_\varrho), v_\varrho)_H - (Fu, v_\varrho)_H = 0,\quad
(y_\varrho(0) - y_0, v_\varrho) = 0
\tag{\text{Q$_\varrho$}}
\end{gather}
for all $v_\rho \in W_\rho$. Let $y$ solve \eqref{eq:q}, and let
$y_\rho$ solve \eqref{eq:q_varrho}. We estimate
$\|y_\varrho - \PPi_\varrho y\|_{H}$ below.

\begin{theorem}
Let the nonlinearity $N$ satisfy the estimate
\begin{gather}\label{eq:N_lbar_estimate}
\|N(y)-N(y_\varrho)\|_{H} \le \bar{\ell}(\|y\|_W + \|y_\varrho\|_W) \|y - y_\varrho\|_{W}
\end{gather}
for some monotone function $\bar{\ell}: [0,\infty) \to [0,\infty)$.
Then it holds for $t \in [0,T]$ that
\[ 
\|(y_\varrho - \PPi_\varrho y)(t)\|_H^2\le
c_1 c_2 \left(\|(y_\varrho - \PPi_\varrho y)(0)\|_H^2 + 
\|y - \PPi_\varrho y\|_{L^2(0,t;W)}^2\right), 
\]
where $c_1 > 0$ is an independent constant and
$c_2 = e^{2T \bar{l}(\|y\|_{L^\infty(0,T;W)} + \|y_\varrho\|_{L^\infty(0,T;W)})^2}$.
\end{theorem}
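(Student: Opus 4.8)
The plan is to run the classical energy-estimate and Gr\"onwall argument for Galerkin approximations of semilinear parabolic problems, after splitting the error into a projection part and a discrete part. I would set $\eta := y - \PPi_\varrho y$ and $\xi := y_\varrho - \PPi_\varrho y \in W_\varrho$, so that $y_\varrho - y = \xi - \eta$ and the quantity to be bounded is exactly $\|\xi(t)\|_H$. Testing \eqref{eq:q} with an arbitrary $v_\varrho \in W_\varrho \subset W$ and subtracting \eqref{eq:q_varrho} produces the Galerkin orthogonality relation $(\partial_t(y_\varrho - y), v_\varrho)_H + a(y_\varrho - y, v_\varrho) + (N(y_\varrho) - N(y), v_\varrho)_H = 0$; inserting $y_\varrho - y = \xi - \eta$ rearranges this into an evolution identity for $\xi$ tested against $v_\varrho$.

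Next I would take $v_\varrho = \xi(t)$. The leading term becomes $(\partial_t\xi, \xi)_H = \tfrac12\tfrac{d}{dt}\|\xi\|_H^2$, and coercivity of $a$ bounds $a(\xi,\xi) \ge \alpha\|\xi\|_W^2$ from below. The key structural simplification---and the reason no time derivative of the projection error appears on the right-hand side---is that $(\partial_t\eta, \xi)_H$ vanishes when $\PPi_\varrho$ is the $H$-orthogonal projection onto the time-independent subspace $W_\varrho$: differentiating $(\eta, v_\varrho)_H = 0$ in time gives $(\partial_t\eta, v_\varrho)_H = 0$ for every $v_\varrho \in W_\varrho$. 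I would therefore take $\PPi_\varrho$ to be the $H$-orthogonal projection here. What then remains on the right are the continuous-projection term $a(\eta, \xi)$ and the nonlinear defect $(N(y_\varrho) - N(y), \xi)_H$.

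I would estimate the two remaining terms by continuity of $a$, the structural bound \eqref{eq:N_lbar_estimate}, Cauchy--Schwarz, and Young's inequality. The term $a(\eta,\xi)$ is controlled by $\tfrac{\alpha}{4}\|\xi\|_W^2 + C\|\eta\|_W^2$, the first summand being absorbed into the coercivity term and the second feeding the right-hand side. For the nonlinear defect I would first use monotonicity of $\bar\ell$ to replace the argument $\|y\|_W + \|y_\varrho\|_W$ by the time-uniform bound $L := \bar\ell(\|y\|_{L^\infty(0,T;W)} + \|y_\varrho\|_{L^\infty(0,T;W)})$, and then write $y - y_\varrho = \eta - \xi$ to obtain $|(N(y_\varrho)-N(y),\xi)_H| \le L(\|\eta\|_W + \|\xi\|_W)\|\xi\|_H$. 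Young's inequality, with weights chosen so that the coefficient of $\|\xi\|_H^2$ equals $L^2$ and the accumulated $\|\xi\|_W^2$ weight stays below $\alpha$ (so that it is absorbed by coercivity), yields a differential inequality of the form $\tfrac{d}{dt}\|\xi\|_H^2 \le 2L^2\|\xi\|_H^2 + c\,\|\xi\|_H^2 + c'\,\|\eta\|_W^2$.

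Finally, Gr\"onwall's lemma applied to $\phi(t) := \|\xi(t)\|_H^2$ gives $\phi(t) \le e^{\int_0^t(2L^2 + c)}\bigl(\phi(0) + c'\textstyle\int_0^t\|\eta\|_W^2\bigr)$; splitting the exponential as $e^{2L^2 t}\,e^{ct} \le e^{2TL^2}\,e^{cT}$ isolates $c_2 = e^{2T\bar\ell(\cdots)^2}$ and folds $e^{cT}$, together with the continuity and coercivity constants and the embedding $W \hookrightarrow H$, into the control-independent constant $c_1$, which is exactly the claimed bound since $\int_0^t\|\eta\|_W^2 = \|y - \PPi_\varrho y\|_{L^2(0,t;W)}^2$. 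The main obstacle I anticipate is the constant bookkeeping in the nonlinear estimate: one must reduce the cross term $\|\xi\|_W\|\xi\|_H$ so that precisely $L^2$ (rather than $L^2/\alpha$) multiplies $\|\xi\|_H^2$ in the Gr\"onwall exponent, while keeping the total $\|\xi\|_W^2$ weight below $\alpha$---this may force a rescaling of the $W$-norm. A secondary point is ensuring that $y_\varrho$ remains bounded on $[0,T]$ so that $L$ is finite and the energy identity is legitimate, which holds because \eqref{eq:q_varrho} is a finite-dimensional (in space) system.
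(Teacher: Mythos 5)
Your proposal is correct and follows essentially the same route as the paper's proof: the same splitting into $\xi = y_\varrho - \PPi_\varrho y$ and $\eta = y - \PPi_\varrho y$, the same key orthogonality $(\partial_t \eta, \xi)_H = 0$ (which the paper states as an observation and you correctly identify as requiring $\PPi_\varrho$ to be the $H$-orthogonal projection onto the time-independent subspace $W_\varrho$), the same testing with $\xi$, the structural bound \eqref{eq:N_lbar_estimate} with Young's inequality to absorb the $\|\xi\|_W$ terms into coercivity, and Gr\"onwall to conclude. The only differences are cosmetic (you bound $a(\eta,\xi)$ by continuity plus Young where the paper uses the symmetric Cauchy--Schwarz inequality for $a$, and the constant-tracking issue you flag for the exponent in $c_2$ is present in the paper's own derivation as well).
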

\begin{proof}
We follow the ideas of \cite[Thm\,2.3]{bernardi1985conforming} and observe
that $(\partial_t (y - \PPi_\varrho y), y_\varrho - \PPi_\varrho y)_H = 0$.
Combining this with the choice $y_\varrho - \PPi_\varrho y$ for the test functions
in \eqref{eq:q_varrho} and \eqref{eq:q} and following the steps in
\cite[Thm\,2.3]{bernardi1985conforming}, we have that
\begin{multline*}
\frac{1}{2} \partial_t (y_\varrho - \PPi_\varrho y, y_\varrho - \PPi_\varrho y)
+ \frac{1}{2} a(y_\varrho - \PPi_\varrho y, y_\varrho - \PPi_\varrho y)\le \\
\frac{1}{2} a(y - \PPi_\varrho y, y - \PPi_\varrho y)
+ (N(y) - N(y_\varrho), y_\varrho-\PPi_\varrho y)_H.
\end{multline*}
The estimate \eqref{eq:N_lbar_estimate} and the triangle inequality yield
\[ \|N(y) - N(y_\varrho)\|_H
 \le \bar{\ell}(\|y\|_W + \|y_\varrho\|_W)
(\|y - \PPi_\varrho y\|_W + \|y_\varrho - \PPi_\varrho y\|_W).
\]
We apply the Cauchy--Schwarz inequality to
$(N(y) - N(y_\varrho), y_\varrho-\PPi_\varrho y)_H$,
insert the estimate above, and apply the inequality
$ab \le \frac{1}{2}a^2 + \frac{1}{2}b^2$ suitably to obtain
\begin{multline*}
(N(y) - N(y_\varrho), y_\varrho-\PPi_\varrho y)_H \le
\frac{1}{4}a(y - \PPi_\rho y,y - \PPi_\rho y) \\
+ \frac{1}{4}a(y_\varrho - \PPi_\rho y,y_\varrho - \PPi_\rho y)
+ \frac{2}{\alpha^2}\bar{l}(\|y\|_W + \|y_\varrho\|_W)^2\|y_\varrho-\PPi_\varrho y\|_H^2,
\end{multline*}
where $a(v,v) \ge \alpha \|v\|_W^2$ by coercivity.
Then the bilinearity and coercivity of $a$ yield
\begin{multline*}
\frac{1}{2} \partial_t (y_\varrho - \PPi_\varrho y, y_\varrho - \PPi_\varrho y)_H
+ \frac{\alpha}{4} \|y_\varrho - \PPi_\varrho y\|_W^2 \le \\
\frac{3}{4} \|y - \PPi_\varrho y\|_W^2
+ \frac{2}{\alpha^2}\bar{l}(\|y\|_W + \|y_\varrho\|_W)^2\|y_\varrho-\PPi_\varrho y\|_H^2.
\end{multline*}
Making the dependency on $t$ explicit and rearranging, we obtain 
\begin{multline*}
\partial_t \frac{1}{2}\|(y_\varrho - \PPi_\varrho y)(t)\|_H^2 \le
\frac{3}{4} \|(y - \PPi_\varrho y)(t)\|_W^2\\
+ \frac{2}{\alpha^2}\bar{l}(\|y(t)\|_W + \|y_\varrho(t)\|_W)^2\|(y_\varrho-\PPi_\varrho y)(t)\|_H^2
- \partial_t \frac{\alpha}{4}\|y_\varrho - \PPi_\varrho y\|_{L^2(0,t;W)}^2.
\end{multline*}
We scale by $2$ and apply the Gronwall lemma to obtain
\begin{multline*}
\|(y_\varrho - \PPi_\varrho y)(t)\|_H^2\le\\
c_2 \left(\|(y_\varrho - \PPi_\varrho y)(0)\|_H^2
+ \frac{3}{2}\int_0^t \|(y - \PPi_\varrho y)(s)\|_W^2 \dd s
	- \frac{\alpha}{2}\|(y_\varrho - \PPi_\varrho y)(t)\|_W^2 .
\right) 
\end{multline*}
We use the estimate $\|y_\varrho - \PPi_\varrho y\|_H^2 \le \beta \|y_\varrho - \PPi_\varrho y\|_W^2$ for some $\beta > 0$
and the fact that $c_2 \ge 1$ to deduce that there
exists $c_1 > 0$ such that
\begin{gather*}
\|(y_\varrho - \PPi_\varrho y)(t)\|_H^2\le
c_1 c_2
\left(\|(y_\varrho - \PPi_\varrho y)(0)\|_H^2 + 
\|y - \PPi_\varrho y\|_{L^2(0,t;W)}^2\right).
\end{gather*}
\end{proof}

\section{Augmented \acs{POD} basis computation}\label{sec:augmented_pod_basis_computation}

Having established the theoretical framework above, we argue for the following 
augmentation of the common \ac{POD} basis computation procedure.
We compute  $v$ and collect impulse response snapshots $w(t)$
using $F$ as initial value. Then, we reduce the collected set with
\ac{POD} and obtain a reduced basis $\mathcal{B}^{(1)}$ of $d^{(1)}$,
\cf \cref{thm:first_newton_step_spaces}.
We compute a basis of a linear subspace $\mathcal{C}$,
in which $N(y^{(1)}_{\mathcal{B}^{(1)}})$ takes its values,
\cf \cref{sec:second_newton_step}.
This step depends on the nonlinearity $N$. For $N(y) = by^3$ in our
guiding example, we have 
\[ \mathcal{C} = \spn\left\{(\bar{y} + \psi^i)(\bar{y} + \psi^j)(\bar{y} + \psi^k) \,\vert\, \forall \text{ combinations } i,j,k \right\}
\]
for $\mathcal{B}^{(1)} = \{\psi^1,\ldots,\psi^n\}$.
Now, we compute impulse responses for the right-hand side of the second
Newton step given in \cref{lem:second_newton_step_formula} 
by means of impulse response snapshots, \cf \cref{thm:second_newton_step_spaces}.

After collecting and reducing the snapshots, we obtain the basis
$\mathcal{B}^{(2)}$ of $d^{(2)}$. Because
$y^{(2)} = \bar{y} + d^{(1)} + d^{(2)}$,
we can compute a basis and reduced \ac{FEM} operators for the
second Newton iterate $y^{(2)}$ by applying \ac{POD} to
the set $\{ \bar{y} \} \cup \mathcal{B}^{(1)} \cup \mathcal{B}^{(2)}$.
We summarize this procedure in \cref{alg:two_step_pod_computation}.

\begin{algorithm}
\caption{Two-step Newton-based \ac{POD} Computation}\label{alg:two_step_pod_computation}  
\begin{algorithmic}[1]
  \REQUIRE{\ac{IBVP} solution operator $\solve$, \ac{POD} basis computation $\podop$} 
  \REQUIRE{Linearization point $\bar{y}$}
  \STATE{$\hphantom{(w_k)_k}\mathllap{(v_k)_k} \gets \solve$ \eqref{eq:vk}}  
  \STATE{$(w_k)_k \gets \solve$ \eqref{eq:wk}}  
  \STATE{$\hphantom{(w_k)_k}\mathllap{\mathcal{B}^{(1)}} \gets \podop( (v_k)_k \cup (w_k)_k )$}
  \STATE{$\{\phi^1,\ldots,\phi^m\} \gets $ \texttt{compute basis of $N(y^{(1)})$ from $\mathcal{B}^{(1)}$}}
  \STATE{$\hphantom{(w_k)_k}\mathllap{(r_k)_k} \gets \solve$ \eqref{eq:rk}}  
  \FOR{$i = 1$ \TO $n$} 
  	\STATE{$(\beta^i_k)_k \gets \solve$ \eqref{eq:betaik}}  
  \ENDFOR  
  \FOR{$j = 1$ \TO $m$} 
  	\STATE{$(\gamma^j_k)_k \gets \solve$ \eqref{eq:gammajk}}
  \ENDFOR
  \STATE{$\hphantom{(w_k)_k}\mathllap{\mathcal{B}^{(2)}} \gets \podop\left( (r_k)_k \cup \bigcup_{i=1}^n (\beta^{i}_k)_k \cup \bigcup_{j=1}^n (\gamma^j_{k})_k \right)$}
  \STATE{$\hphantom{(w_k)_k}\mathllap{\mathcal{B}^{(12)}} \gets \podop\left( \{ \bar{y} \} \cup \mathcal{B}^{(1)} \cup \mathcal{B}^{(2)} \right)$}
  \RETURN{$\mathcal{B}^{(12)}$}
\end{algorithmic}  
\end{algorithm} 

\section{Computational results}\label{sec:computational_results}
We demonstrate our findings by means of a numerical implementation of the guiding 
example from \cref{sec:guiding_example}.

\subsection{Setup}
We have chosen $a = 0.01$ and $b = 3$ as parameters for  the \ac{PDE} and its linearizations. Regarding the time domain, 
we have used an equidistant grid consisting of $65$ intervals. The time stepping has been realized with the help of the backward 
Euler method. Regarding the spatial domain, we have used finite elements of quadratic order on a triangulation of an L-shaped 
domain.
For the linearization point $(\bar{y}, \bar{u})$, we have set $\bar{u} \equiv 2$ as well as $\partial_t \bar{y} \equiv 0$ and computed 
the resulting $\bar{y}$ to solve \eqref{eq:ex_sl}.
%
%
Regarding the error or difference computations between state vectors, we note that we have always used the $H^1$-norm 
for the spatial domain. The same applies for the \ac{POD} computations.

\subsection{Approximation with two simplified Newton steps}\label{sec:approx_two_newton_steps}
We compare the solution $y$ of \eqref{eq:ex_sl} to
$y^{(1)} = \bar{y} + d^{(1)}$ and
$y^{(2)} = \bar{y} + d^{(1)} + d^{(2)}$
for a given test control $u$, which is displayed in \cref{fig:u_test},
and given $\bar{y} \equiv y_0$. We have computed $y^{(1)}$ and $y^{(2)}$ with the help of the linearizations 
of \eqref{eq:ex_sl} described in \cref{sec:evolution_equations}. The relative difference between $y^{(2)}$ and $y$ is more than one order of
magnitude smaller than the relative difference between $y^{(1)}$ and $y$. 
We have computed $\mathcal{B}^{(1)}$ and $\mathcal{B}^{(12)}$ 
by means of \cref{alg:two_step_pod_computation}. Consequently, \eqref{eq:ex_sl} has been solved by using the reduced spaces, 
\ie reduced versions of the operators, yielding solutions $y_{\mathcal{B}^{(1)}}$, $y_{\mathcal{B}^{(12)}}$. We are interested in their ability 
to approximate $y$. We observe that the relative approximation error of $y_{\mathcal{B}^{(12)}}$ is two orders of magnitude smaller than 
that of $y_{\mathcal{B}^{(1)}}$. 

We note that the dimension of the discrete state vectors $y$ using
the \ac{FEM} matrices was $2037$, $14$ for $y_{\mathcal{B}^{(1)}}$
and $1168$ for $y_{\mathcal{B}^{(12)}}$. The exact results of
these four
computations are given in \cref{tbl:y_y1_y2_relation}. The high
number of basis vectors in $\mathcal{B}^{(12)}$ is due to the fact
that we have included every basis vector of $\mathcal{B}^{(2)}$ except
for those with a singular value smaller than $10^{-8}$,
the cutoff value of the \ac{SVD}, into $\mathcal{B}^{(12)}$.
It is interesting what happens when we do not use all of
them and drop those corresponding very small singular values. This
situation is investigated in the context of an \ac{OCP} in the next subsection.
\begin{table}
\caption{Relative approximation error between $y$ and the Newton step approximations $y^{(1)}$, and $y^{(2)}$ (\ac{FEM} model)
as well as the POD approximations $y_{\mathcal{B}^{(1)}}$ and $y_{\mathcal{B}^{(12)}}$.}\label{tbl:y_y1_y2_relation}
\begin{center}
\begin{tabular}{cccc}
	\hline 
	$\frac{\|y - y^{(1)}\|}{\|y\|}$ & $\frac{\|y - y^{(2)}\|}{\|y\|}$ & $\frac{\|y - y_{\mathcal{B}^{(1)}}\|}{\|y\|}$ & $\frac{\|y - y_{\mathcal{B}^{(12)}}\|}{\|y\|}$ \\ \hline 
	\num{8.8660e-04} & \num{6.3124e-05} & \num{1.6310e-04} & \num{1.3320e-06} \\ \hline
\end{tabular}
\end{center}
\end{table}
%

\subsection{Application to an \acf{OCP}}\label{sec:application_to_ocp}

We have solved the following a tracking-type \ac{OCP} with given desired
state $y_d$ and Tikhonov regularization parameter $\gamma = 10^{-7}$:
\begin{gather*}
	\min_{y,u} \frac{1}{2}\|y - y_d\|_{Z}^2 + \frac{\gamma}{2}\|u\|_{L^2}^2\quad
	\text{s.t.}\quad\partial_t y - a\Delta y + by^3 = Fu,\enskip
	y|_{\partial \Omega} = 0.
\end{gather*}
A reduced objective approach has been chosen to obtain an unconstrained \ac{OCP}.
The optimization routine has been initialized with $u \equiv 0$.
The target state $y_d$ is the solution of the state equation for
the control input visualized in \cref{fig:u_ref}.
We have solved the \ac{IBVP}  with \ac{FEM}, $\mathcal{B}^{(1)}$, and
$\mathcal{B}^{(12)}$. Regarding $\mathcal{B}^{(12)}$,
we have run the computations for different sizes $B_2 = |\mathcal{B}^{(2)}|$.
Specifically, we have successively increased the number of basis vectors in
$\mathcal{B}^{(2)}$ following a descending order of the corresponding
singular values. The experiment has been run on two spatial grids
with different mesh sizes.

\begin{figure}
\begin{subfigure}[c]{0.45\linewidth}
\includegraphics[height=4.25cm]{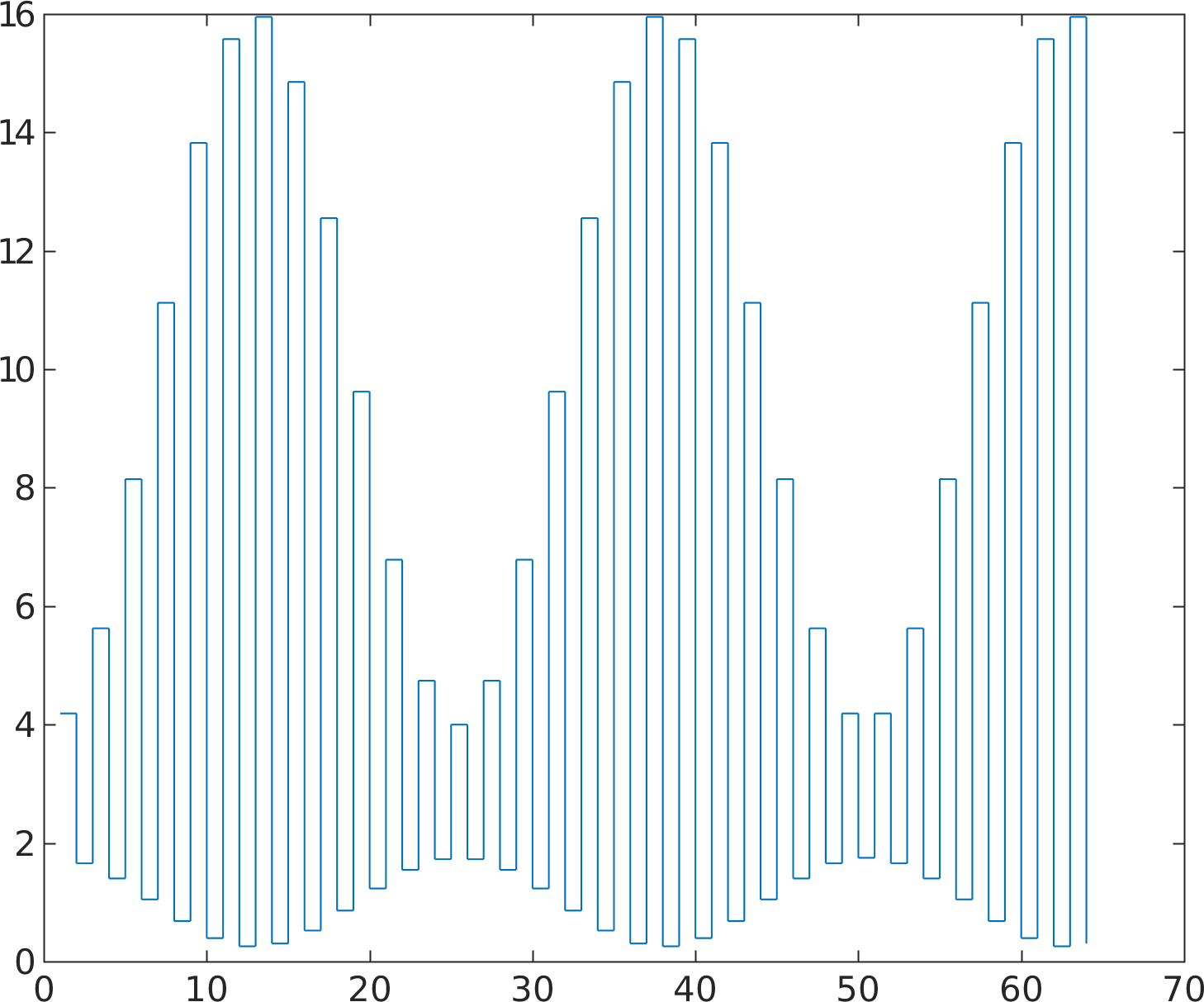}
\caption{Reference control to compute $y_d$
in \cref{sec:application_to_ocp}.}\label{fig:u_ref}
\end{subfigure}
\hfill
\begin{subfigure}[c]{0.45\linewidth}
\includegraphics[height=4.25cm]{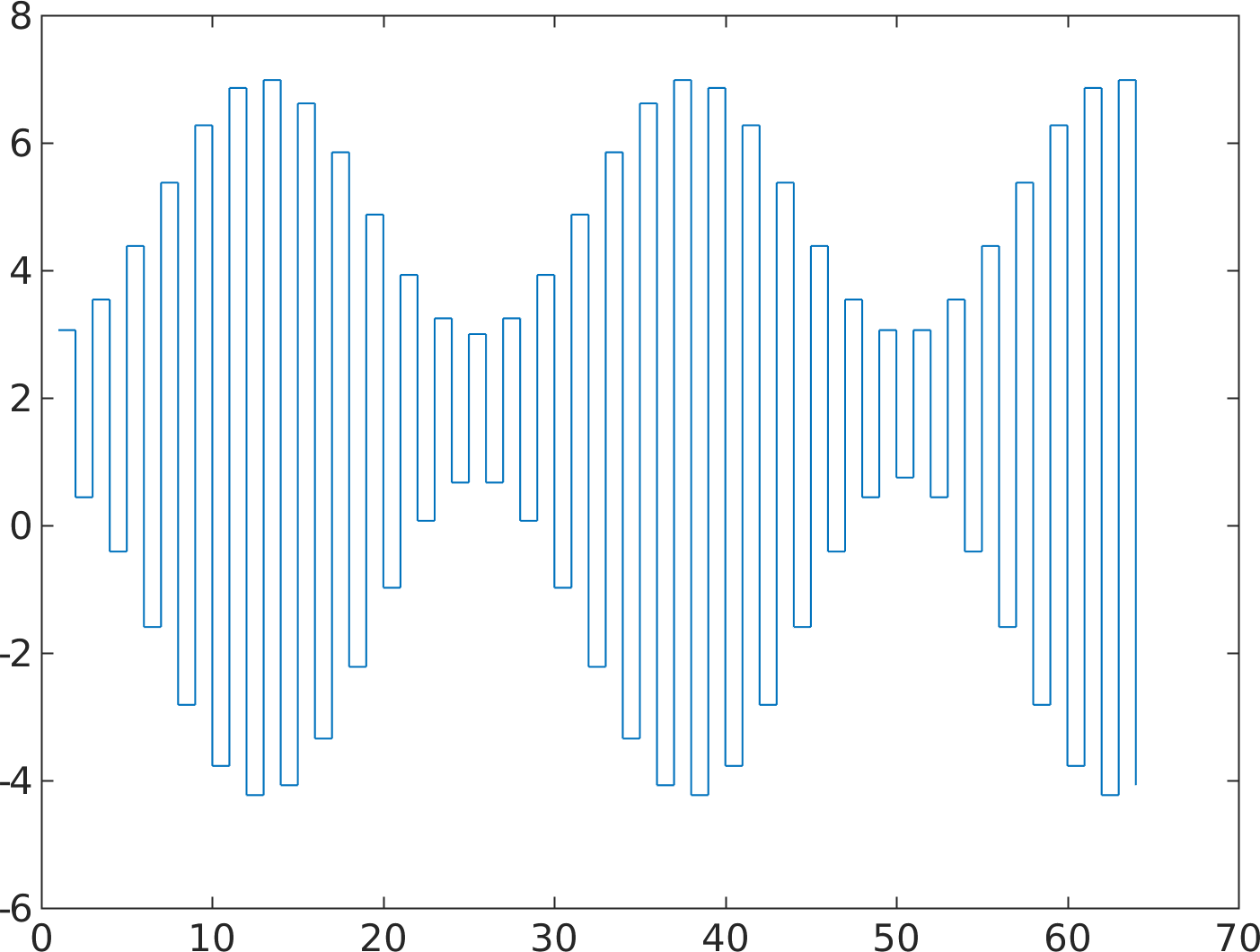}
\caption{Test control input for the experiment in
\cref{sec:approx_two_newton_steps}.}\label{fig:u_test}
\end{subfigure}
\caption{Control inputs for computational test cases.}
\end{figure}

On the coarse grid, the state vector of the \ac{FEM} discretization has $2,469$
entries while the state vector of the one-step POD $\mathfrak{B}^{(1)}$
has $12$ entries. On the fine grid, the state vector of \ac{FEM} 
discretization has $5,597$ entries while the state vector of the discretization using the one-step POD $\mathcal{B}^{(1)}$ has $13$ entries. In both cases,
vectors from $\mathcal{B}^{(2)}$ were included in
$\mathcal{B}^{(12)}$ until the corresponding singular value fell below
$10^{-8}$. For both grids, the additional basis vectors yield more
accurate optimized objective values compared with the \ac{FEM} discretization.
Adding $10$ basis vectors yields a drop of the relative error in the objective 
value from $\num{2.56e-2}$ to $\num{1.25e-4}$ while the computation
time increases from $\SI[scientific-notation=false]{229}{\second}$ to
$\SI[scientific-notation=false]{306}{\second}$, compared with
$\SI[scientific-notation=false]{9733}{\second}$ 
for the \ac{FEM} solution. The number of optimization iterations stays
almost constant: $44$ iterations are used on the \ac{FEM} model,
$43$ on the $\mathcal{B}^{(1)}$ model,
and $44$ on all $\mathcal{B}^{(12)}$ models. 
Similarly, on the fine grid, adding $10$ basis vectors resulted in
a drop in the relative error in the objective value from
$\num{2.34e-2}$ to $\num{7.22e-5}$ while the computation time increased
from $\SI[scientific-notation=false]{428}{\second}$ to 
$\SI[scientific-notation=false]{555}{\second}$, compared with
$\SI[scientific-notation=false]{41739}{\second}$ for the \ac{FEM}
solution. In all cases, the optimization consumes $40$
iterations.

Two figures illustrate our results. \Cref{fig:b12_size_vs_relobjerror}
shows how the relative objective error decreases for
for an increasing basis $\mathcal{B}^{(12)}$.
\begin{figure}
    \centering
\begin{tikzpicture}[scale=0.65]
\begin{axis}[ymode=log, ylabel=$e$, ylabel style={yshift=-0.25cm,rotate=-90}, xlabel=\# basis vectors]
\addplot[blue,mark=*,select coords between index={2}{99}] table [x=size,y=relobj,col sep=comma] {ie_0.075.csv};
\addplot[green,mark=*,select coords between index={1}{1}] table [x=size,y=relobj,col sep=comma] {ie_0.075.csv};
\end{axis}
\end{tikzpicture}\hspace{1cm}
\begin{tikzpicture}[scale=0.65]
\begin{axis}[ymode=log, ylabel=$e$, ylabel style={yshift=-0.25cm,rotate=-90}, xlabel=\# basis vectors]
\addplot[blue,mark=*,select coords between index={2}{100}] table [x=size,y=relobj,col sep=comma] {ie_0.05.csv};
\addplot[green,mark=*,select coords between index={1}{1}] table [x=size,y=relobj,col sep=comma] {ie_0.05.csv};
\end{axis}
\end{tikzpicture}        
\caption{Relative objective error (vertical axes) against size of
$\mathcal{B}^{(12)}$ (horizontal axes) for coarse (left) and
fine (right) grid. The relative objective error for the
$\mathcal{B}^{(1)}$ model is marked in
green.}\label{fig:b12_size_vs_relobjerror}
\end{figure}
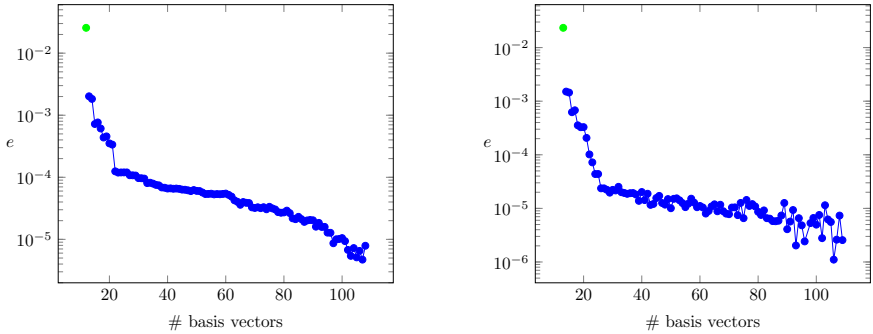
\Cref{fig:b12_size_vs_timeconsumption} shows the running time
of the \ac{OCP} solves with the $\mathcal{B}^{(12)}$ model
for an increasing number of basis vectors.
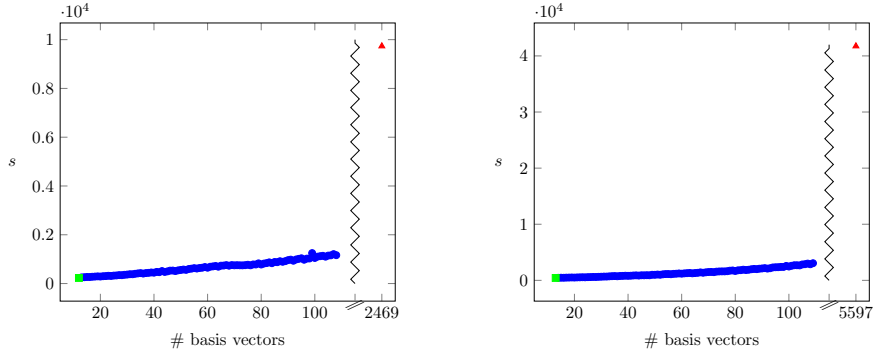
\begin{figure}
    \centering
\begin{tikzpicture}[scale=0.65]
\begin{axis}[xmin=5,
             xmax=130,
             xticklabels={20,40,60,80,100,2469},
             xtick=      {20,40,60,80,100,125},
			 extra x ticks={115},
			 extra x tick style={grid=none, tick label style={xshift=0cm,yshift=.30cm, rotate=-45}},
			 extra x tick label={\color{black}{/\!\!/}},
			 ylabel=$s$, ylabel style={yshift=-0.25cm,rotate=-90},
			 xlabel=\# basis vectors]
\addplot[blue,mark=*,select coords between index={2}{99}] table [x=size,y=time,col sep=comma] {ie_0.075.csv};
\addplot[green,mark=square*,select coords between index={1}{1}] table [x=size,y=time,col sep=comma] {ie_0.075.csv};
\addplot[red,mark=triangle*,select coords between index={0}{0}] table [x index=1,y=time,col sep=comma] {ie_0.075.csv};
\draw[black] decorate [decoration={zigzag}] {(axis cs:115,0) -- (axis cs:115,1e4)};
\end{axis}
\end{tikzpicture}\hspace{1cm}
\begin{tikzpicture}[scale=0.65]
\begin{axis}[xmin=5,
             xmax=130,
             xticklabels={20,40,60,80,100,5597},
             xtick=      {20,40,60,80,100,125},
			 extra x ticks={115},
			 extra x tick style={grid=none, tick label style={xshift=0cm,yshift=.30cm, rotate=-45}},
			 extra x tick label={\color{black}{/\!\!/}},
			 ylabel=$s$, ylabel style={yshift=-0.5cm,rotate=-90},
			 xlabel=\# basis vectors]
\addplot[blue,mark=*,select coords between index={2}{100}] table [x=size,y=time,col sep=comma] {ie_0.05.csv};
\addplot[green,mark=square*,select coords between index={1}{1}] table [x=size,y=time,col sep=comma] {ie_0.05.csv};
\addplot[red,mark=triangle*,select coords between index={0}{0}] table [x index=1,y=time,col sep=comma] {ie_0.05.csv};
\draw[black] decorate [decoration={zigzag}] {(axis cs:115,0) -- (axis cs:115,4.2e4)};
\end{axis}
\end{tikzpicture}        
\caption{Running time of the \ac{OCP} (vertical axes) against the number of
basis vectors (horizontal axes)
for coarse (left) and fine (right) grid 
for $\mathcal{B}^{(1)}$ (green squares), increasing
$\mathcal{B}^{(12)}$ (blue dots), and \ac{FEM} (red triangle) models.}\label{fig:b12_size_vs_timeconsumption}
\end{figure}

\section{Application in mixed-integer optimal control}\label{sec:miocp}

We briefly outline how the presented model reduction can be help to solve relaxations of \acp{MIOCP}.
Employing Sager's convexification technique \cite{sager2005numerical,sager2012integer} to control problems constrained by semilinear evolution equations
with discrete-valued control inputs, one obtains state equations of the form
\[ \partial_t y - Ay = \sum_{i=1}^M \omega_i f_i(y),\quad y(0) = y_0 \]
with $\omega_i \in L^\infty((0,T), \mathbb{R}^M)$ and $\omega_i(t) \in \{0,1\}^M$ and $\sum_{i=1}^M\omega_i(t) = 1$ for a.a.\ $t \in [0,T]$;
see \cite{hante2013relaxation,manns2019improved}. The $\omega_i$ may be regarded as activations of the different right-hand sides $f_i$.
Following the ideas in \cite{sager2012integer}, one can approach the \ac{OCP} by first solving a relaxation in which the constraint $\omega_i(t) \in \{0,1\}^M$
is relaxed to $\omega_i(t) \in [0,1]^M$ and then computing a binary-valued approximation of the relaxed activation, a procedure that is known as
\emph{combinatorial integral decomposition}; see \cite{jung2015the}.
Using the proposed method for snapshot generation, one can obtain improved reduced models for the semilinear equations
\[ \partial_t y - Ay = \omega_i f_i(y),\quad y(0) = y_0 \]
for $i \in \{1,\ldots,M\}$ and in particular reduced bases for the terms
\begin{gather}\label{eq:conv_term}
	\int_0^t S(t - s)f_i(y(s))\omega_i(s) \dd s,\quad i \in \{1,\ldots,M\}
\end{gather}
if $(S(t))_{t \ge 0}$ denotes the semigroup generated by $A$. By the variation of constants formula, we have
\[ y(t) = S(t)y_0 + \sum_{i=1}^M \int_0^t S(t - s)f_i(y(s))\omega_i(s) \dd s. \]
Consequently, by combining the bases for the terms in \eqref{eq:conv_term} using a POD computation as in \cref{alg:two_step_pod_computation}, one obtains
an efficient approximation of the solution operator of the semilinear equation in the relaxed problem.

If many relaxations have to be solved, for example in a branch-and-bound procedure, high-quality surrogate models are even more important.
In \cite{bachmann2019pod}, \ac{POD} models are used for a linear parabolic equation in a branch-and-bound procedure. We envision efficient treatment of semilinear equations in this context using the proposed method.

\section{Conclusion}\label{sec:conclusion}

We have developed an algorithm to compute \ac{POD} models for a class of 
semilinear evolution equations using the approximation properties of
simplified Newton steps on the state equation.
The computational results validate the theoretical findings. Furthermore,
we have solved a tracking-type \ac{OCP} constrained by a semilinear \ac{PDE}
from the investigated class on an FEM model, the one-step \ac{POD} model and
a sequence of increasingly augmented \ac{POD} models. A moderate number of additional
basis vectors improves the approximation of the optimization on the
\ac{FEM} model significantly compared with the one-step \ac{POD} model. 

Thus, if one is willing to spend the expensive \emph{offline phase} for snapshot
generation in \cref{alg:two_step_pod_computation}, 
for example because many similar \acp{OCP} have to be solved in an \ac{MPC} context or to solve relaxations of \acp{MIOCP}, one can  trade in a
moderate loss in speed-up of the reduced model for a much better capture of the result.
For example, in our computational setup on the fine grid,
we have achieved an improvement of the relative objective error by a factor of $200$ at the cost of approximately halving the speed-up when including
$30$ additional basis vectors of $\mathfrak{B}^{(2)}$ into $\mathfrak{B}^{(12)}$.

\appendix
\section{The continuous embedding $\mathcal{H} \hookrightarrow C([0,T],H^1_0(\Omega))$}\label{app:embedding}

For existence of solutions of the semilinear equation \eqref{eq:ex_sl}
in \cref{prp:ex_setting}, we refer to
\cite[Prop.\ 5.1]{barbu2010nonlinear}. Considering the results therein,
a regularity of the solution 
in the space $C([0,T];H^1_0(\Omega))$ seems to be out of reach.
Moreover, the application of the vector-valued embedding theorem
\cite[Thm 8.60]{leoni2017first} with the choices
$Y = H^2(\Omega) \cap H^1_0(\Omega)$ and $H = H^1_0(\Omega)$
seems to require a simultaneous identification of both Hilbert spaces
$L^2(\Omega)$ and $H^1_0(\Omega)$ with their respective
topological dual spaces.

However, we may substitute the identification of
$H^1_0(\Omega) \cong H^{-1}(\Omega)$ with the multidimensional
integration by parts formula that arises from the divergence theorem
and otherwise follow the proof of \cite[Thm 8.60]{leoni2017first}
using $L^2(\Omega)$ instead of $V^*$.
This approach allows us to use only the continuous embeddings
$V \hookrightarrow H^1_0(\Omega) \hookrightarrow L^2(\Omega)$.
For completeness, we sketch the modified proof below.
Note that the assumed boundary regularity that $\Omega$
is convex or of class $C^2$ (see \cref{sec:guiding_example})
is sufficient for this argument.

\begin{proposition}\label{prp:embedding}
Consider $\mathcal{H} = \left\{ u \in L^2(0,T; V)\,\vert\,\partial_t u \in L^2(0,T; L^2(\Omega))\right\}$ with
$\|u\|_{\mathcal{H}} = \|u\|_{L^2(0,T; V)} + \|\partial_t u\|_{L^2(0,T; L^2(\Omega))}$ for $u \in \mathcal{H}$.
Then the continuous embedding $\mathcal{H} \hookrightarrow C([0,T], H^1_0(\Omega))$ holds because there exists $C > 0$ such that
\[ \sup_{t \in [0,T]} \|u(t)\|_{H^1_0(\Omega)} \le
   C\left(\|u\|_{L^2(0,T; V)} + \|\partial_t u\|_{L^2(0,T; L^2(\Omega))}\right)
\]
holds for all $u \in \mathcal{H}$.
\end{proposition}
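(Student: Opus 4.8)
The plan is to follow the proof of the abstract embedding theorem \cite[Thm 8.60]{leoni2017first}, but to replace every abstract duality pairing by a concrete $L^2(\Omega)$ inner product obtained through Green's formula; this is precisely where the boundary regularity of $\Omega$ (convex or of class $C^2$) enters. The guiding identity is that, at least formally,
\[
\frac{\dd}{\dd t}\|u(t)\|_{H^1_0(\Omega)}^2
= 2\left(\nabla u(t), \nabla \partial_t u(t)\right)_{L^2(\Omega)}
= -2\left(\Delta u(t), \partial_t u(t)\right)_{L^2(\Omega)},
\]
where the second equality is integration by parts. The boundary term $\int_{\partial\Omega}(\partial_n u(t))\,\partial_t u(t)\,\dd S$ vanishes because $u(t)\in H^1_0(\Omega)$ for every $t$, so differentiating the homogeneous boundary condition in time shows that $\partial_t u(t)$ has vanishing trace as well. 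This is the substitution announced before the statement: the pairing that would live in the dual of $V=H^2(\Omega)\cap H^1_0(\Omega)$ is realized as the genuine $L^2(\Omega)$ product $-(\Delta u,\partial_t u)_{L^2}$, using only the embeddings $V\hookrightarrow W\hookrightarrow H$.

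First I would make this identity rigorous by mollification in time. For $u\in\mathcal H$ let $u_\varepsilon$ be a temporal mollification (with a standard reflection/extension near $t=0,T$) so that $u_\varepsilon$ is smooth in $t$ with values in $V$, so that $u_\varepsilon\to u$ in $L^2(0,T;V)$, and so that $\partial_t u_\varepsilon=(\partial_t u)*\rho_\varepsilon\to\partial_t u$ in $L^2(0,T;H)$. For such $u_\varepsilon$ one has $u_\varepsilon(t)\in H^2(\Omega)$ and $\partial_t u_\varepsilon(t)\in V\subset H^1_0(\Omega)$ for all $t$, so Green's formula applies and the displayed identity holds pointwise in $t$. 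Integrating it from an arbitrary $s$ to $t$ and using the Cauchy--Schwarz inequality together with $\|\Delta u_\varepsilon(\tau)\|_{L^2}\le\|u_\varepsilon(\tau)\|_V$ gives
\[
\|u_\varepsilon(t)\|_{H^1_0(\Omega)}^2
\le \|u_\varepsilon(s)\|_{H^1_0(\Omega)}^2
+ 2\|u_\varepsilon\|_{L^2(0,T;V)}\,\|\partial_t u_\varepsilon\|_{L^2(0,T;H)}.
\]

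Next I would choose $s=s_0$ as a point where $\|u_\varepsilon(s_0)\|_{H^1_0}^2\le T^{-1}\|u_\varepsilon\|_{L^2(0,T;W)}^2\le (C/T)\|u_\varepsilon\|_{L^2(0,T;V)}^2$ (such a point exists by the mean value property of the integral) and absorb the mixed term with $2ab\le a^2+b^2$. Taking the supremum over $t$ yields
\[
\sup_{t\in[0,T]}\|u_\varepsilon(t)\|_{H^1_0(\Omega)}
\le C\left(\|u_\varepsilon\|_{L^2(0,T;V)}+\|\partial_t u_\varepsilon\|_{L^2(0,T;H)}\right).
\]
Applying the same estimate to the differences $u_\varepsilon-u_\delta$ shows that $(u_\varepsilon)$ is Cauchy in $C([0,T];H^1_0(\Omega))$; its limit agrees with $u$ a.e., so $u$ has a representative in $C([0,T];H^1_0(\Omega))$, and passing to the limit in the inequality gives the claimed bound. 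Continuity (rather than mere essential boundedness) is then automatic from this Cauchy argument, but it can alternatively be recovered from the weak continuity of $u$ into $W$ combined with the continuity of $t\mapsto\|u(t)\|_W$, since weak convergence together with norm convergence implies strong convergence in the Hilbert space $W$.

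I expect the main obstacle to be the rigorous justification of the integration by parts and of the vanishing boundary term when $\partial_t u$ is only an $L^2(\Omega)$-valued function: this lack of spatial regularity of $\partial_t u$ is exactly what forces the mollification argument, and the vanishing of $\int_{\partial\Omega}(\partial_n u)\,\partial_t u\,\dd S$ relies on the boundary regularity hypothesis on $\Omega$ to make $H^2$-trace theory and the divergence theorem available. Once the energy identity is secured on the smooth-in-time mollifications, the remaining steps are the routine estimate and density passage sketched above.
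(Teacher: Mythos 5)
Your proof is correct and follows essentially the same route as the paper: temporal mollification after extension by reflection, Green's formula to replace the abstract duality pairing by $-(\Delta u_\varepsilon,\partial_t u_\varepsilon)_{L^2(\Omega)}$ with the boundary term vanishing because $\partial_t u_\varepsilon(t)\in H^1_0(\Omega)$ by virtue of the mollification, and then the completion of the argument as in \cite[Thm 8.60]{leoni2017first}. The only difference is cosmetic: you spell out the choice of the base point $s_0$ and the Cauchy argument in $C([0,T];H^1_0(\Omega))$, which the paper delegates to the cited proof.
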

\begin{proof}
Let $u \in \mathcal{H}$. We use extension by reflection
to extend the function $u$ to the interval $(-\beta,T + \beta)$ for
some $\beta > 0$. We smooth $u$ with a family of standard
mollifiers $(\varphi_{\varepsilon})_{\varepsilon > 0}$ that are compactly 
supported in $(-\beta,T + \beta)$ and define
$u_\varepsilon \coloneqq u * \varphi_\varepsilon$.
Then we obtain $u_\varepsilon \to u \in L^2((0,T), V)$ and
$\partial_t u_\varepsilon \to \partial_t u \in L^2((0,T), L^2(\Omega))$.
We highlight that for the convergence
$\partial_t u_\varepsilon \to \partial_t u$ it is important
that the mollification of the derivative is the derivative of
the mollification. A cutoff argument to prove this  works  only
by virtue of the extension to the interval $(-\beta,T + \beta)$,
and we cannot extend it using absolute 
continuity because this is essentially what is to be shown.

Now, the mollification gives that $u_\varepsilon$, $\partial_t u_\varepsilon \in C_c^\infty(\R,V)$
and thus $u_\varepsilon$, $\partial_t u_\varepsilon \in C_c^\infty(\R,H^1_0(\Omega))$.
As in \cite[(8.32)]{leoni2017first}, we obtain for $x$, $x_0 \in [0,T]$
that
\[ \|u_\varepsilon(x)\|_{H^1_0(\Omega)}^2
   = \|u_\varepsilon(x_0)\|_{H^1_0(\Omega)}^2
     + 2 \int_{x_0}^x (\partial_t u_\varepsilon(s), u_\varepsilon(s))_{H^1_0(\Omega)}\,\dd s, \]
where $(\cdot,\cdot)_{H^1_0(\Omega)}$ is the usual inner product
on $H^1_0(\Omega)$. In particular, we can write
\[ \|u_\varepsilon(x)\|_{H^1_0(\Omega)}^2
   = \|u_\varepsilon(x_0)\|_{H^1_0(\Omega)}^2
     + 2 \int_{x_0}^x \int_\Omega \nabla \partial_t u_\varepsilon(s)^T \nabla u_\varepsilon(s)\dd\omega\dd s,\]
which allows us to apply multidimensional integration by parts that
follows from the divergence theorem to deduce 
\[
\|u_\varepsilon(x)\|_{H^1_0(\Omega)}^2
   = \|u_\varepsilon(x_0)\|_{H^1_0(\Omega)}^2
     + 2 \int_{x_0}^x 
     \int_{\partial\Omega} \partial_t u_\varepsilon(s) \nabla u_\varepsilon(s)\cdot\dd \sigma - 
     \int_\Omega \partial_t u_\varepsilon(s) \Delta u_\varepsilon(s)\dd\omega\dd s. \]
Since $\partial_t u_\varepsilon(s) \in H^1_0(\Omega)$ for all
$s \in [x_0,x]$ by virtue of the mollification, we obtain
\[ \|u_\varepsilon(x)\|_{H^1_0(\Omega)}^2
   = \|u_\varepsilon(x_0)\|_{H^1_0(\Omega)}^2
     - 2 \int_{x_0}^x 
     \int_\Omega \partial_t u_\varepsilon(s) \Delta u_\varepsilon(s)\,\dd\omega\,\dd s, \]
which implies 
\[ \|u_\varepsilon(x)\|_{H^1_0(\Omega)}^2
   = \|u_\varepsilon(x_0)\|_{H^1_0(\Omega)}^2
     + 2 \|\partial_t u_\varepsilon\|_{L^2(0,T;L^2(\Omega))}^2
         \|u_\varepsilon\|_{L^2(0,T;V)}^2 \]
by virtue of H\"{o}lder's inequality and $V\hookrightarrow L^2(\Omega)$.

Now the remainder of the proof of \cite[Thm 8.60]{leoni2017first}
applies if the dual space of $V$ is replaced with $L^2(\Omega)$
and the duality pairing $\langle \partial_t u(s), u(s) \rangle_{V^*,V}$
is replaced with $\int_\Omega \partial_t u(s) \Delta u(s)\dd\omega$.
\end{proof}

\begin{remark}
The fact that $\partial_t u_\varepsilon(s)$ is in 
$H^1_0(\Omega)$, which follows from $u(s) \in H^1_0(\Omega)$,
seems to be crucial for the proof of \cref{prp:embedding}.
However, there is also an abstract argument based on interpolation spaces.
In particular, one may combine \cite[Thm 4.10.2]{amann1995linear}
(choices $E_0 = H^2(\Omega)$, $E_1 = L^2(\Omega)$, $p = 2$) with the
continuous embedding
$H^1_0(\Omega) \hookrightarrow H^1(\Omega)$
to obtain that $\mathcal{H} \hookrightarrow C([0,T], H^1_0(\Omega))$,
where the fact that $u(s) \in H^1_0(\Omega)$ seems to be irrelevant.
\end{remark}

\begin{acronym}[Bash]
 \acro{FEM}{finite-element method}
 \acro{IBVP}{initial boundary value problem}
 \acro{MIOCP}{Mixed-Integer Optimal Control Problem}
 \acro{MPC}{model predictive control}
 \acro{OCP}{optimal control problem} 
 \acro{PCA}{principal component analysis}  
 \acro{PDE}{partial differential equation} 
 \acro{POD}{Proper Orthogonal Decomposition} 
 \acro{SVD}{singular-value decomposition}  
\end{acronym}

\bibliographystyle{spmpsci}
\bibliography{refs}

\bigskip
\framebox{\parbox{.92\linewidth}{The submitted manuscript has been created by
UChicago Argonne, LLC, Operator of Argonne National Laboratory (``Argonne'').
Argonne, a U.S.\ Department of Energy Office of Science laboratory, is operated
under Contract No.\ DE-AC02-06CH11357.  The U.S.\ Government retains for itself,
and others acting on its behalf, a paid-up nonexclusive, irrevocable worldwide
license in said article to reproduce, prepare derivative works, distribute
copies to the public, and perform publicly and display publicly, by or on
behalf of the Government.  The Department of Energy will provide public access
to these results of federally sponsored research in accordance with the DOE
Public Access Plan \url{http://energy.gov/downloads/doe-public-access-plan}.}}
\end{document}